  \edef\tikzOptions{colors=true}
  \edef\tikzOptions{colors=false}
  \edef\tikzOptions{\tikzOptions,decorations=true}
  \edef\tikzOptions{\tikzOptions,decorations=false}
\newtheorem{theorem}{Theorem}[section]
\newtheorem{corollary}[theorem]{Corollary}
\newtheorem{lemma}[theorem]{Lemma}
\newtheorem{proposition}[theorem]{Proposition}
\newtheorem{example}[theorem]{Example}
\newtheorem{question}[theorem]{Question}
\DeclareDocumentCommand\orderO{m}{\mathcal{O}\left(#1\right)}
\DeclareDocumentCommand\orderOmega{m}{\Omega\left(#1\right)}
\DeclareDocumentCommand\ordero{m}{o\left(#1\right)}
\DeclareDocumentCommand\orderTheta{m}{\Theta\left(#1\right)}
\DeclareDocumentCommand\setdef{mo}{\left\{#1\IfNoValueTF{#2}{}{ : #2}\right\}}
\DeclareDocumentCommand\bincoeff{mm}{\genfrac{(}{)}{0pt}{}{#1}{#2}}
\DeclareDocumentCommand\conv{o}{\operatorname{conv}\IfValueTF{#1}{\left(#1\right)}{}}
\DeclareDocumentCommand\onevec{o}{\IfNoValueTF{#1}{\mathbbm{1}}{\mathbbm{1}_{#1}}}
\DeclareDocumentCommand\proj{oo}{\IfValueTF{#1}{\operatorname{proj}{}_{#1}}{%
  \operatorname{proj}{}}\IfValueTF{#2}{\left(#2\right)}{}}
\DeclareDocumentCommand\scalprod{mm}{\left<#1,#2\right>}
\DeclareDocumentCommand\unitvec{m}{\mathbbm{e}^{#1}}
\DeclareDocumentCommand\zerovec{o}{\IfNoValueTF{#1}{\mathbb{O}}{\mathbb{O}_{#1}}}
\DeclareDocumentCommand\cplxNP{}{\mathsf{NP}}
\DeclareDocumentCommand\dim{o}{\operatorname{dim}\IfValueTF{#1}{\left(#1\right)}{}}
\DeclareDocumentCommand\homog{o}{\operatorname{homog}\IfValueTF{#1}{\left(#1\right)}{}}
\DeclareDocumentCommand\cl{o}{\operatorname{cl}\IfValueTF{#1}{\left(#1\right)}{}}
\DeclareDocumentCommand\cone{o}{\operatorname{cone}\IfValueTF{#1}{\left(#1\right)}{}}
\DeclareDocumentCommand\lin{o}{\operatorname{span}\IfValueTF{#1}{\left(#1\right)}{}}
\DeclareDocumentCommand\xc{o}{\operatorname{xc}%
  \IfValueTF{#1}{\left(#1\right)}{}}
\DeclareDocumentCommand\polyLattice{o}{\mathcal{L}\hspace{-.1em}\IfValueTF{#1}{\left(#1\right)}{}}
\DeclareDocumentCommand\N{}{\mathbb{N}}
\DeclareDocumentCommand\R{}{\mathbb{R}}
\DeclareDocumentCommand\BicliqueFaces{o}{%
  \mathcal{F}\IfValueTF{#1}{\hspace{-.1em}\left(#1\right)}{}}
\DeclareDocumentCommand\BicliqueVertices{o}{%
  \mathcal{V}\IfValueTF{#1}{\hspace{-.1em}\left(#1\right)}{}}
\DeclareDocumentCommand\BicliqueGraph{o}{%
  G_{\mathcal{N}}\IfValueTF{#1}{\hspace{-.1em}\left(#1\right)}{}}
\DeclareDocumentCommand\sxc{o}{\operatorname{sxc}\IfNoValueTF{#1}{}{\left(#1\right)}}
\DeclareDocumentCommand\CommonNeighbors{o}{\Lambda\IfValueTF{#1}{\left(#1\right)}{}}
\NewDocumentCommand\PerfMatchPoly{m}{\ensuremath{P^{\text{perf}}_{\text{match}}\left( #1 \right)}}
\NewDocumentCommand\MatchPoly{m}{\ensuremath{P_{\text{match}}\left( #1 \right)}}
\NewDocumentCommand\pmMone{}{\textcolor{red}{\ensuremath{M_1}}}
\NewDocumentCommand\pmMtwo{}{\textcolor{green!70!black}{\ensuremath{M_2}}}
\NewDocumentCommand\pmMthree{}{\textcolor{orange}{\ensuremath{M_3}}}
\NewDocumentCommand\pmMprime{}{\textcolor{blue}{\ensuremath{M'}}}
\NewDocumentCommand\pmMbar{}{\textcolor{blue}{\ensuremath{\overline{M}}}}
\NewDocumentCommand\pmMstar{}{\textcolor{blue}{\ensuremath{M^*}}}
\NewDocumentCommand\pmMj{}{\textcolor{red}{\ensuremath{M_j}}}
\NewDocumentCommand\pmMk{}{\textcolor{green!70!black}{\ensuremath{M_k}}}
\NewDocumentCommand\pmMone{}{\ensuremath{M_1}}
\NewDocumentCommand\pmMtwo{}{\ensuremath{M_2}}
\NewDocumentCommand\pmMthree{}{\ensuremath{M_3}}
\NewDocumentCommand\pmMprime{}{\ensuremath{M'}}
\NewDocumentCommand\pmMbar{}{\ensuremath{\overline{M}}}
\NewDocumentCommand\pmMstar{}{\ensuremath{M^*}}
\NewDocumentCommand\pmMj{}{\ensuremath{M_j}}
\NewDocumentCommand\pmMk{}{\ensuremath{M_k}}
\NewDocumentCommand\SpanTreePoly{m}{\ensuremath{P_{\textrm{spt}}\left( #1 \right)}}
\NewDocumentCommand\sptAllW{}{\ensuremath{\mathcal{W}}}
\NewDocumentCommand\sptTe{}{\textcolor{red}{\ensuremath{e}}}
\NewDocumentCommand\sptTf{}{\textcolor{green!70!black}{\ensuremath{f}}}
\NewDocumentCommand\sptTg{}{\textcolor{blue}{\ensuremath{g}}}
\NewDocumentCommand\sptTh{}{\textcolor{orange}{\ensuremath{h}}}
\NewDocumentCommand\sptTj{}{\textcolor{purple!70!black}{\ensuremath{j}}}
\NewDocumentCommand\sptTW{}{\ensuremath{\textcolor{blue}{T}(W)}}
\NewDocumentCommand\sptTprime{}{\textcolor{red}{\ensuremath{T'}}}
\NewDocumentCommand\sptTe{}{\ensuremath{e}}
\NewDocumentCommand\sptTf{}{\ensuremath{f}}
\NewDocumentCommand\sptTg{}{\ensuremath{g}}
\NewDocumentCommand\sptTh{}{\ensuremath{h}}
\NewDocumentCommand\sptTj{}{\ensuremath{j}}
\NewDocumentCommand\sptTW{}{\ensuremath{T(W)}}
\NewDocumentCommand\sptTprime{}{\ensuremath{T'}}
\NewDocumentCommand\khsi{}{\textcolor{red}{\ensuremath{i}}}
\NewDocumentCommand\khsj{}{\textcolor{blue}{\ensuremath{j}}}
\NewDocumentCommand\khsv{}{\textcolor{green!70!black}{\ensuremath{v}}}
\NewDocumentCommand\khsi{}{\ensuremath{i}}
\NewDocumentCommand\khsj{}{\ensuremath{j}}
\NewDocumentCommand\khsv{}{\ensuremath{v}}
\NewDocumentCommand\FlowPoly{om}{%
  \ensuremath{P_{\IfNoValueTF{#1}{}{\textrm{#1-}}\textrm{flow}}\left( #2 \right)}}
\NewDocumentCommand\FlowPaths{om}{%
  \ensuremath{ \IfNoValueTF{#1}{\mathcal{P}}{\mathcal{P}_{ #1 }}\left( #2 \right) }}
\NewDocumentCommand\flowB{}{\textcolor{orange}{\ensuremath{B}}}
\NewDocumentCommand\flowP{}{\textcolor{blue}{\ensuremath{P}}}
\NewDocumentCommand\flowPprime{}{\textcolor{green!70!black}{\ensuremath{P'}}}
\NewDocumentCommand\flowW{}{\textcolor{red}{\ensuremath{W}}}
\NewDocumentCommand\flowWprime{}{\textcolor{purple!70!black}{\ensuremath{W'}}}
\NewDocumentCommand\flowB{}{\ensuremath{B}}
\NewDocumentCommand\flowP{}{\ensuremath{P}}
\NewDocumentCommand\flowPprime{}{\ensuremath{P'}}
\NewDocumentCommand\flowW{}{\ensuremath{W}}
\NewDocumentCommand\flowWprime{}{\ensuremath{W'}}
\title{Simple Extensions of Polytopes}
\author{Volker Kaibel and Matthias Walter\\
Otto-von-Guericke University Magdeburg}
\begin{document}

\maketitle

\begin{abstract}
We introduce the \emph{simple extension complexity} of
a polytope $P$ as the smallest number of facets of any
simple (i.e., non-degenerate in the sense of linear programming) polytope which can be projected onto $P$.
We devise a combinatorial method to establish lower bounds on the simple extension complexity and show 
for several polytopes that they have large simple extension complexities.
These examples include both the spanning tree and the perfect matching polytopes of complete graphs,
uncapacitated flow polytopes for non-trivially decomposable directed acyclic graphs,
hypersimplices, and random 0/1-polytopes with vertex numbers within a certain range.
On our way to obtain the result on perfect matching polytopes we generalize a result of Padberg and Rao's
on the adjacency structures of those polytopes.
To complement the lower bounding techniques
we characterize in which cases known construction techniques
yield simple extensions.
\end{abstract}

\pagebreak[3]
\section{Introduction}

In combinatorial optimization, linear programming formulations are a standard tool 
to gain structural insight, derive algorithms and to analyze computational complexity.
With respect to both structural and algorithmic aspects linear optimization 
over a polytope~$P$ can be replaced by linear optimization over any 
(usually higher dimensional) polytope~$Q$ of which~$P$ can be obtained as the image 
under a linear map (which we refer to as a \emph{projection}). 
Such a polytope~$Q$ (along with a suitable projection) is called an \emph{extension} of~$P$. 

Defining the \emph{size} of a polytope as its number of facets,
the smallest size of any extension of  the polytope $P$ is known as 
the \emph{extension complexity} $\xc[P]$ of $P$. 
It has turned out in the past that for several important polytopes
related to combinatorial optimization problems the  extension complexity
is bounded polynomially in the dimension.
One of the most prominent examples is the spanning tree polytope of the complete
graph~$K_n$ on $n$ nodes, which has extension complexity $\orderO{n^3}$~\cite{Martin91}. 

After Rothvoß~\cite{Rothvoss12} showed that there are 0/1-polytopes whose extension 
complexities cannot be bounded polynomially in their dimensions, 
only recently Fiorini, Massar, Pokutta, Tiwary and de Wolf \cite{FioriniMPTW12} could prove
that the extension complexities of some concrete and important examples of polytopes like
traveling salesman polytopes cannot be bounded polynomially.
Similar results have then also been deduced for several other polytopes associated with
$\cplxNP$-hard optimization problems, e.g.,
by Avis and Tiwary~\cite{AvisT13} and Pokutta and van Vyve~\cite{PokuttaV13}. 
Very recently, Rothvoß~\cite{Rothvoss14} showed that also the perfect matching polytope of the complete 
graph (with an even number of nodes) has exponential extension complexity, thus exhibiting the 
first polytope with this property that is associated with a polynomial time solvable optimization problem.  

The first fundamental research with respect to understanding extension complexities was 
Yannakakis' seminal paper~\cite{Yannakakis91} of 1991. Observing that many of the nice and 
small extensions that are known (e.g., the polynomial size extension of the spanning
tree polytope of $K_n$ mentioned above) have the nice property of being symmetric in a certain sense, 
he derived lower bounds on extensions with that special property. In particular, he already
proved that both perfect matching polytopes as well as traveling salesman polytopes do not
have polynomial size \emph{symmetric} extensions. 

It turned out that requiring symmetry in principle 
actually can make a huge difference for the minimum sizes of extensions (though nowadays we know that this is not really true for traveling salesman and perfect matching polytopes).
For instance, Kaibel, Theis, and Pashkovich~\cite{KaibelPT12} showed that the
polytope associated with the matchings of size $\lfloor \log n\rfloor$ in $K_n$
has polynomially bounded extension complexity although it does not admit symmetric extensions
of polynomial size.
Another example is provided by the permutahedron which has extension 
complexity $\orderTheta{n\log n}$~\cite{Goemans09},
while every symmetric extension of it has size $\orderOmega{n^2}$~\cite{Pashkovich14}. 

These examples show that imposing the restriction of symmetry may severely influence
the smallest possible sizes of extensions.
In this paper, we investigate another type of restrictions on extensions,
namely the one arising from requiring the extension to be a non-degenerate polytope. 
A $d$-dimensional polytope is called \emph{simple} if every vertex
is contained in exactly $d$~facets.
We denote by $\sxc[P]$ the \emph{simple extension complexity}, i.e.,
the smallest size of any simple extension of the polytope~$P$. 

From a practical point of view, simplicity is an interesting property
since it formalizes primal non-degeneracy of linear programs.
In addition, large parts of combinatorial/extremal theory of polytopes
deal with simple polytopes.
Furthermore, as with other restrictions like symmetry, there indeed exist
nice examples of simple extensions of certain polytopes relevant in optimization.
For instance, generalizing the well-known fact that the permutahedron is a zonotope, Wolsey showed in the late 80's (personal communication) that, for arbitrary processing times,
the completion time polytope for $n$ jobs is a projection of an $\orderO{n^2}$-dimensional cube.
The main results of this paper show, however, that for several polytopes relevant in optimization
(among them both perfect matching polytopes and spanning tree polytopes) insisting on simplicity
enforces very large sizes of the extensions.
More precisely, we establish that for the following polytopes the simple extension complexity
equals their number of vertices (note that the number of vertices of~$P$ is a trivial
upper bound for $\sxc[P]$, realized by the extension obtained from
writing $P$ as the convex hull of its vertices): 
\begin{itemize}
\item
  Perfect matching polytopes of complete graphs
  (Theorem~\ref{TheoremPerfectMatchingPolytopeSimpleExtensionComplexity})
\item
  Uncapacitated flow polytopes of non-decomposable acyclic networks 
  (Theorem~\ref{TheoremFlowSimpleExtensionComplexity})
\item
  (Certain) random 0/1-polytopes  (Theorem~\ref{TheoremRandomPolytopesSimpleExtensionComplexity})
\item
  Hypersimplices (Theorem~\ref{TheoremHypersimplex})
\end{itemize}
Furthermore, we prove that
\begin{itemize}
	\item 
    the spanning tree polytope of the complete graph with~$n$ nodes
    has simple extension complexity at least
    $\orderOmega{2^{n-\ordero{n}}}$
    (Theorem~\ref{TheoremSpanningTreePolytopeSimpleExtensionComplexity}).
\end{itemize}

The paper is structured as follows:
We first focus on known construction techniques and characterize when
reflections and disjunctive programming yield simple extensions (Section~\ref{SectionConstructions}).
We continue with some techniques to bound the simple extension complexity of a polytope
from below (Section~\ref{SectionTechniques}).
Then we deduce our results on hypersimplices (Section~\ref{SectionHypersimplex}),
spanning tree polytopes (Section~\ref{SectionSpanningTreePolytope}),
flow polytopes (Section~\ref{SectionFlowPolytopes}),
and perfect matching polytopes (Section~\ref{SectionPerfectMatchingPolytope}).
The core of the latter part is a strengthening of a result of
Padberg and Rao's \cite{PadbergR74} on adjacencies in the perfect
matching polytope (Theorem~\ref{TheoremThreeCommonNeighbor}), which may be of independent interest. 

Let us end this introduction by remarking that the concept of \emph{simplicial extensions}
is not interesting.
To see this, observe that any $d$-polytope $Q$ with $N$ vertices has at least $d \cdot N$ facet-vertex incidences since
every vertex lies in at least $d$ facets.
On the other hand, if $Q$ is simplicial (i.e., all facets are simplices) and has $f$ facets, the number of facet-vertex incidences
is equal to $d \cdot f$, proving $f \geq N$.
For every polytope $P$ with $N$ vertices, every extension polytope has at least $N$ vertices,
and hence the smallest possible simplicial
extension polytope of $P$ is the simplex with $N$ vertices.

\pagebreak[3]
\section{Constructions}
\label{SectionConstructions}

There are three major techniques for constructing extended formulations,
namely dynamic programming, disjunctive programming, and reflections.
Extensions based on dynamic programs yield network flow polytopes
for acyclic graphs which are not simple in general
and also have large simple extension complexities
(see Section~\ref{SectionFlowPolytopes}).

In this section we characterize for the other two techniques mentioned above
in which cases the produced extensions are simple.

\subsection{Reflections}

Let $P = \setdef{ x \in \R^n }[ Ax \leq b]$ be a polytope
and let $H_{\leq} = \setdef{ x \in \R^n }[ \scalprod{a}{x} \leq \beta ]$
be a halfspace in $\R^n$.
Denoting by $P_1 := P \cap H_{\leq}$ the intersection of the polytope with the halfspace
and by $P_2$ the image of $P_1$ under reflection at the boundary hyperplane
$H_=$ of $H_{\leq}$,
we call $\conv(P_1 \cup P_2)$ the \emph{reflection} of $P$ at $H_{\leq}$.
The technique in \cite{KaibelP11a} provides an extended formulation
for this polytope.

\begin{proposition}[Kaibel \& Pashkovich \cite{KaibelP11a}]
  \label{TheoremReflectionExtension}
  The polytope $Q$ defined by
  \begin{gather*}
    Q = \setdef{ (x,y) \in \R^{n + n} }[ 
      Ay \leq b,
      \scalprod{a}{y} \leq \scalprod{a}{x} \leq 2 \beta - \scalprod{a}{y},
      (x-y) \in \lin[a] ]
  \end{gather*}
  together with the projection onto the $x$-space
  is an extension of $\conv[P_1 \cup P_2]$.
\end{proposition}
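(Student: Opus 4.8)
The plan is to show that the linear projection $\pi\colon(x,y)\mapsto x$ maps $Q$ onto $\conv[P_1\cup P_2]$, by proving the two inclusions $\pi(Q)\subseteq\conv[P_1\cup P_2]$ and $\conv[P_1\cup P_2]\subseteq\pi(Q)$ separately. Throughout I would fix the orthogonal decomposition $\R^n=\lin[a]\oplus a^{\perp}$ and, for a point $z$, write $z=z_{\parallel}+z_{\perp}$ accordingly; the constraint $(x-y)\in\lin[a]$ then says exactly that $x_{\perp}=y_{\perp}$, while the reflection $\rho$ at $H_=$ fixes $a^{\perp}$-components and acts affinely on the $\lin[a]$-component, sending the value $\scalprod{a}{\cdot}$ from $\gamma$ to $2\beta-\gamma$. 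The key elementary observation is that the sandwich inequality $\scalprod{a}{y}\le\scalprod{a}{x}\le 2\beta-\scalprod{a}{y}$, under the assumption $\scalprod{a}{y}\le\beta$ (which I will need to extract), expresses $x$ as a convex combination of $y$ and its reflection $\rho(y)$ that lives on the segment $[y,\rho(y)]$ — and this segment is parallel to $a$, consistent with $x-y\in\lin[a]$.

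First I would prove $\pi(Q)\subseteq\conv[P_1\cup P_2]$. Take $(x,y)\in Q$. From $Ay\le b$ we get $y\in P$. Adding the two halves of the sandwich inequality gives $2\scalprod{a}{y}\le 2\beta$, i.e.\ $\scalprod{a}{y}\le\beta$, so $y\in P\cap H_{\le}=P_1$. Now set $\lambda\in[0,1]$ to be the unique scalar with $\scalprod{a}{x}=(1-\lambda)\scalprod{a}{y}+\lambda(2\beta-\scalprod{a}{y})$, which exists precisely because of the sandwich inequality (if $\scalprod{a}{y}=\beta$ the interval is a point and $x=y$; otherwise solve the linear equation). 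I claim $x=(1-\lambda)y+\lambda\rho(y)$: both sides have the same $a^{\perp}$-component (namely $y_{\perp}$, using $x_{\perp}=y_{\perp}$ and that $\rho$ fixes $a^{\perp}$-components) and the same value under $\scalprod{a}{\cdot}$ by the choice of $\lambda$, and since the $\lin[a]$-component of a point is determined by the value of $\scalprod{a}{\cdot}$, the two points agree. Since $y\in P_1$ and $\rho(y)\in P_2$, this exhibits $x\in\conv[P_1\cup P_2]$.

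Then I would prove the reverse inclusion $\conv[P_1\cup P_2]\subseteq\pi(Q)$. Because $\pi(Q)$ is a polytope (hence convex), it suffices to show every vertex of $\conv[P_1\cup P_2]$ lies in $\pi(Q)$, and every such vertex is a vertex either of $P_1$ or of $P_2$. If $v$ is a vertex of $P_1$, I would check that $(v,v)\in Q$: indeed $Av\le b$, $\scalprod{a}{v}\le\beta$ gives $\scalprod{a}{v}\le 2\beta-\scalprod{a}{v}$, and trivially $\scalprod{a}{v}\le\scalprod{a}{v}$ and $v-v=\zerovec\in\lin[a]$. If instead $v$ is a vertex of $P_2$, write $v=\rho(w)$ with $w$ a vertex of $P_1$, and check $(v,w)\in Q$: we have $Aw\le b$; since $\scalprod{a}{w}\le\beta$ we get $\scalprod{a}{w}\le 2\beta-\scalprod{a}{w}=\scalprod{a}{\rho(w)}=\scalprod{a}{v}=2\beta-\scalprod{a}{w}$, so the sandwich holds with $x=v$ at its upper end; and $v-w=\rho(w)-w\in\lin[a]$ because $\rho$ moves points only in the $a$-direction. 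Hence $v=\pi(v,w)\in\pi(Q)$, completing the argument.

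The routine part is the coordinate bookkeeping with the decomposition $\R^n=\lin[a]\oplus a^{\perp}$ and the affine description of $\rho$. The only genuinely delicate point — and the step I would write out most carefully — is the equivalence between the sandwich inequality plus $x-y\in\lin[a]$ on one hand, and "$x$ lies on the segment $[y,\rho(y)]$ with $y\in H_{\le}$" on the other; once that dictionary is in place, both inclusions are immediate. (One should also note the edge case $a=\zerovec$, where $H_{\le}$ is either all of $\R^n$ or empty and the statement is vacuous or trivial, and the generic normalization $\scalprod{a}{a}\neq 0$ otherwise.)
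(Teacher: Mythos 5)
Your proof is correct. Note that the paper itself gives no proof of this proposition --- it is quoted as a known result from Kaibel and Pashkovich \cite{KaibelP11a} --- so there is nothing in the paper to compare against; your argument is the standard one and matches the construction in that reference. Both directions check out: the sandwich inequality together with $x-y\in\lin[a]$ does exactly encode ``$x$ lies on the segment between $y$ and its reflection $\rho(y)$,'' and your verification that $(v,v)\in Q$ for $v\in P_1$ and $(\rho(w),w)\in Q$ for $w\in P_1$ settles the reverse inclusion (you could even skip the reduction to vertices, since these checks work for arbitrary points of $P_1$ and $P_2$ and $\pi(Q)$ is convex). Your handling of the degenerate cases $\scalprod{a}{y}=\beta$ and $a=\zerovec$ is appropriate.
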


Our contribution is the next theorem which 
clarifies under which circumstances $Q$ is a simple polytope.

\begin{theorem}
  \label{TheoremReflectionSimple}
  Let $Q$ be the extension polytope for the reflection of $P$ at $H_{\leq}$
  as defined in this subsection,
  let $P_1 := P \cap H_{\leq}$, and let $F := P_1 \cap H_=$ be the
  intersection of $P_1$ with the reflection hyperplane.

  Then $Q$ is simple if and only if $P_1$ is simple and either $P_1 = F$,
  or $F$ is a facet of $P_1$ or $F = \emptyset$.
\end{theorem}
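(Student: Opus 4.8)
The plan is to analyze the vertex–facet incidences of $Q$ directly from the defining inequality system and compare them to those of $P_1$. First I would note that $Q$ lives in $\R^{2n}$ but the equations $(x-y)\in\lin[a]$ cut down its affine hull; writing $x = y + t\cdot a$ for a scalar $t$ (using the description of $\lin[a]$ as a one-dimensional space, assuming $a\neq 0$; the degenerate case $a=0$ should be dispatched separately), I would introduce coordinates $(y,t)\in\R^{n+1}$ for the affine hull of $Q$, so $Q$ becomes $\{(y,t) : Ay\le b,\ 0\le t\le 2(\beta-\scalprod{a}{y})/\lVert a\rVert^2\}$ after rescaling — i.e.\ $Ay\le b$ together with $t\ge 0$ and $t\le c - \scalprod{a'}{y}$ for suitable $a',c$. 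Thus $Q$ is a \emph{prism-like} body: it is the set of points lying between the graph of the affine function $y\mapsto 0$ and the graph of $y\mapsto c-\scalprod{a'}{y}$ over the base polytope $P$, truncated to where the upper function exceeds the lower. This makes the combinatorics of $Q$ tractable.

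The key step is then a case analysis of the vertices of $Q$. A vertex of $Q$ projects to a point $y^\ast\in P$ and has either $t=0$ (the ``bottom''), $t = c-\scalprod{a'}{y^\ast}$ (the ``top''), or lies strictly between (only possible if $y^\ast$ is a vertex of $P$ and strictly satisfies $\scalprod{a}{y^\ast}<\beta$, i.e.\ $y^\ast$ is a vertex of $P_1$ not on $H_=$). In the first two cases $y^\ast$ is a vertex of the appropriate facet of $P$ cut out by the corresponding inequality, and one counts: a bottom vertex over a vertex $y^\ast$ of $P$ lies in the facet $t\ge0$ plus the $n$ facets of $P$ through $y^\ast$ pulled back — that is $n+1$ facets in a $(n+1)$-dimensional polytope, hence simple, \emph{provided} $y^\ast\in P_1$, i.e.\ the point is not cut off, and provided the facet $\{t=0\}$ is genuinely a facet (it is iff $F\neq\emptyset$, otherwise the whole bottom collapses). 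The subtle point is what happens over vertices $y^\ast$ lying on $H_=$: there $t=0$ and $t=c-\scalprod{a'}{y^\ast}$ coincide, the two ``graph'' facets meet, and the single vertex $(y^\ast,0)$ of $Q$ then lies in both of them plus the $n$ facets of $P_1$ through $y^\ast$. Since $y^\ast$ is a vertex of $P_1$ it lies in $n$ facets of $P_1$, but one of those facets is $F=P_1\cap H_=$ itself precisely when $F$ is a facet of $P_1$; if $F$ has smaller dimension, then $y^\ast$ lies in \emph{more} than the ``expected'' number of facets and $Q$ fails to be simple. Conversely, if $P_1=F$ (the whole polytope lies on the hyperplane, so $P_2=P_1$ and $Q$ is just a copy of $P_1$ times a degenerate interval — here $Q\cong P_1$ and simplicity of $Q$ is equivalent to that of $P_1$), or if $F$ is a facet, or if $F=\emptyset$ (so only top and bottom-with-strict-interior vertices occur and the prism is ``untruncated''), the incidence count works out to exactly $\dim Q$ at every vertex.

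So the proof structure is: (1) reduce to the $(n+1)$-dimensional description and identify the facets of $Q$ as pullbacks of facets of $P$ (intersected with the relevant region) together with the two graph inequalities; (2) show $P_1$ simple is necessary, by projecting a non-simple vertex configuration of $P_1$ up into $Q$; (3) for each of the three good cases verify the vertex–facet count equals $n+1$ everywhere; (4) in the remaining case ($F$ a nonempty non-facet proper face of $P_1$), exhibit a vertex of $Q$ lying in at least $n+2$ facets. The main obstacle I anticipate is step (1): carefully determining which of the inequalities $Ay\le b$, $\scalprod{a}{x}\le 2\beta-\scalprod{a}{y}$, $\scalprod{a}{y}\le\scalprod{a}{x}$ are actually facet-defining for $Q$ — in particular ruling out redundancies introduced by the interaction between $Ay\le b$ and the box constraints on $t$, and correctly handling faces of $P$ that lie entirely inside $H_{\leq}$ versus those that get truncated by it. Everything downstream is bookkeeping on incidences, but that bookkeeping has to be done uniformly across the (many) positions a vertex of $Q$ can occupy relative to $H_=$ and $\partial H_{\leq}$.
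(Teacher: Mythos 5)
Your overall strategy---pass to coordinates $(y,t)$ with $x=y+ta$, view $Q$ as the region between $t=0$ and $t=2(\beta-\scalprod{a}{y})/\lVert a\rVert^{2}$ over the base, and locate any failure of simplicity at vertices sitting on $H_=$ where the two ``cap'' facets meet---is sound and hits exactly the same crux as the paper: the faces $Q_1=Q\cap\{\scalprod{a}{x}=\scalprod{a}{y}\}$ and $Q_2=Q\cap\{\scalprod{a}{x}=2\beta-\scalprod{a}{y}\}$ are copies of $P_1$ meeting in a copy of $F$, every vertex of $Q$ lies on one of them, and degeneracy can only occur on $Q_1\cap Q_2$. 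The difference is in the bookkeeping. You count facet incidences everywhere, which forces you to settle which rows of $Ay\le b$ remain facet-defining for $Q$ and whether the facets of $P_1$ through a vertex of $F$ lift to distinct facets of $Q$ --- precisely the obstacle you flag yourself. The paper sidesteps this for the delicate direction by counting \emph{neighbors} instead: with $d=\dim P_1$, a vertex $v$ of $Q_1\cap Q_2\cong F$ has exactly $k=\dim F$ neighbors inside the simple face $Q_1\cap Q_2$ and $d-k$ further neighbors in each of $Q_1$, $Q_2$, so if $Q$ is simple then $k+2(d-k)=d+1$, i.e.\ $k=d-1$ and $F$ is a facet. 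That one-line count replaces your step (4) and needs no facet-definingness lemma; your version works too but requires the additional (true) claim that facets of $P_1$ not contained in $H_=$ lift to facets of $Q$. The sufficiency direction in the paper is essentially your step (3): at most $d$ tight rows of $Ay\le b$ plus at most one of the two cap inequalities off $H_=$, and a neighbor count of $d+1$ on $Q_1\cap Q_2$ when $F$ is a facet.

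One concrete error in your setup: there are no vertices of $Q$ with $t$ strictly between $0$ and its upper bound, since the vertical segment through such a point lies in $Q$; your third vertex type is vacuous. Every vertex of $Q$ is of the form $(v,0)$ or $(v,t)$ with $t$ at its upper bound $2(\beta-\scalprod{a}{v})/\lVert a\rVert^{2}$, for $v$ a vertex of $P_1$, the two coinciding exactly when $v\in H_=$. This does not damage the argument (a point tight on only $d$ base constraints cannot be a vertex of the $(d+1)$-dimensional $Q$ anyway), but the case split should be corrected, and the counts should be stated in terms of $d=\dim P_1$ over the base $P_1$ rather than in terms of $n$ over $P$.
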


\begin{proof}
  We first observe that the faces
  \begin{align*}
    Q_1 &:= Q \cap \setdef{ (x,y) \in \R^n \times \R^n }[ \scalprod{a}{y} = \scalprod{a}{x} ] \\
    Q_2 &:= Q \cap \setdef{ (x,y) \in \R^n \times \R^n }[ \scalprod{a}{x} = 2 \beta - \scalprod{a}{y} ]
  \end{align*}
  of $Q$ are both affinely isomorphic to $P_1$.
  Thus $Q$ can only be simple if $P_1$ is so.
  If $P_1 \subseteq H_=$ holds, $Q = Q_1 = Q_2$ and $Q$ is simple if and only if $P_1$ is simple,
  proving the equivalence in case $P_1 = F$.
  Otherwise, let $d := \dim P_1$ and observe that $\dim Q = d+1$ holds
  because $Q_1$ and $Q_2$ are proper faces of $Q$ and $Q$'s
  dimension cannot be larger than $d+1$.
  Furthermore, $(x,y) \in Q_1 \cap Q_2$ holds if and only if $\scalprod{a}{x} = \beta$ is satisfied,
  hence $Q_1 \cap Q_2$ is affinely isomorphic to $F$.
  Define $k := \dim F$.

  We now assume that $Q$ is simple and $F \neq \emptyset$, i.e., $k \geq 0$ holds.
  Let $v$ be any vertex of $Q_1 \cap Q_2$.
  Since $Q$ is simple and of dimension $d+1$, $v$ has $d+1$ adjacent vertices,
  $k$ of which lie in $Q_1 \cap Q_2$ (isomorphic to $F$).
  Furthermore, $v$ has $d$ neighbors in $Q_i$ for $i =1,2$.
  Hence, $k$ of these vertices lie in $Q_1 \cap Q_2$,
  $d-k$ lie in $Q_1 \setminus Q_2$ and $d-k$ lie in $Q_2 \setminus Q_1$.
  The resulting equation $k + (d-k) + (d-k) = d+1$ yields $k = d-1$,
  i.e., $F$ is a facet of $P_1$.
  This proves necessity of the condition.

  To prove sufficiency, from now on assume that $P_1$ is simple and $\dim Q = d+1$ holds.
  We now prove that every vertex $(x,y)$ of $Q$ not lying in $Q_1 \cap Q_2$
  lies in at most (thus, exactly) $d+1$ facets of $Q$.
  First, $y$ can satisfy at most $d$ inequalities of $Ay \leq b$ with equality
  because $P_1$ is simple.
  Second, $(x,y)$ can satisfy at most one of the other two inequalities
  with equality since otherwise, $\scalprod{a}{x} = \beta$ would hold,
  contradicting the fact that $(x,y) \notin Q_1 \cap Q_2$.
  Hence, the vertex lies in at most $d+1$ facets which proves the claim.
  This already proves that $Q$ is simple in the case $F = \emptyset$,
  since then there are no further vertices.

  It remains to show that if $F$ is a facet of $P_1$
  then every vertex $(x,y)$ of $Q_1 \cap Q_2$
  has at most $d+1$ neighbors in $Q$.
  In this case, $Q_1 \cap Q_2$ is a facet of $Q_1$ and of $Q_2$ which in turn are facets of $Q$.
  Since $Q_1 \cap Q_2$ is a facet of the simple polytope $Q_i$ for $i=1,2$,
  the vertex $(x,y)$ has $d-1$ neighbors in the (simple) facet $Q_1 \cap Q_2$
  and $1$ neighbor in $Q_i \setminus (Q_1 \cap Q_2)$.
  In total, $(x,y)$ has $d+1$ neighbors, because
  all vertices of $Q$ are vertices of $Q_1$ or $Q_2$
  since for fixed $y$ with $Ay \leq b$, any $x$ with
  $(x-y) \in \lin[a]$ must satisfy one of the other two inequalities
  with equality if it is an extreme point.
\end{proof}

An interesting observation is that in case of a reflection at a hyperplane $H_=$
which does not intersect the given polytope $P$, the resulting
extension polytope is combinatorially equivalent to $P \times [0,1]$.
This yields a (deformed) cube if such a reflection is applied iteratively
if the initial polytope is a cube.
Examples are the extensions of size $2\log m$ for regular $m$-gons 
for the case of $m = 2^k$ with $k \in \N$.

\begin{theorem}
  \label{TheoremRegularPolygons}
  Let $k \in \N$.
  The simple extension complexity of a regular $2^k$-gon is at most $2k$.
\end{theorem}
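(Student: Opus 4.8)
The plan is to realise the regular $2^k$-gon as the image of a (deformed) $k$-dimensional cube, obtained by iterating the reflection construction of Section~\ref{SectionConstructions} starting from a line segment; Theorem~\ref{TheoremReflectionSimple} will then guarantee simplicity, and the facet count will be $2+2(k-1)=2k$. Place the regular $2^k$-gon $R$ with vertices $w_0,\dots,w_{2^k-1}$ on the unit circle, $w_i$ at angle $2\pi i/2^k$, and for $1\le j\le k$ put $B_j:=\conv\{w_0,w_1,\dots,w_{2^j-1}\}$, the ``fan'' on the first $2^j$ vertices, so that $B_1$ is a segment and $B_k=R$.

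The geometric core is as follows. For $j\in\{2,\dots,k\}$ let $\sigma_{j-1}$ be the reflection of $\R^2$ at the line $\ell_{j-1}$ through the origin that forms angle $\pi(2^j-1)/2^k$ with the first coordinate axis. Then $\sigma_{j-1}$ sends $w_i$ to $w_{2^j-1-i}$, so $\sigma_{j-1}(B_{j-1})=\conv\{w_{2^{j-1}},\dots,w_{2^j-1}\}$ and $\conv(B_{j-1}\cup\sigma_{j-1}(B_{j-1}))=B_j$; moreover $\ell_{j-1}$ is disjoint from $B_{j-1}$. For the disjointness one checks that the vertex angles of $B_{j-1}$ fill the closed arc $[0,\pi(2^j-2)/2^k]$, that both directions of $\ell_{j-1}$ (namely $\pi(2^j-1)/2^k$ and that value plus $\pi$) lie strictly outside this arc for every $2\le j\le k$, and that $B_{j-1}$ does not contain the origin since its spanning arc is shorter than $\pi$; hence the line $\ell_{j-1}$ misses $B_{j-1}$ entirely. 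Thus each $B_j$ arises from $B_{j-1}$ by a reflection at a hyperplane \emph{disjoint from} the polytope being reflected.

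Now build extensions inductively: let $Q_1:=B_1$, a simple polytope with $2$ facets, and given a simple extension $Q_{j-1}$ of $B_{j-1}$ form the reflection extension of Proposition~\ref{TheoremReflectionExtension} built on top of $Q_{j-1}$ for the reflection of $B_{j-1}$ at $\ell_{j-1}$; this yields an extension $Q_j$ of $\conv(B_{j-1}\cup\sigma_{j-1}(B_{j-1}))=B_j$ with exactly two facets more than $Q_{j-1}$. Since $\ell_{j-1}\cap B_{j-1}=\emptyset$, the face $F$ appearing in Theorem~\ref{TheoremReflectionSimple} is empty, and its ``$F=\emptyset$'' case --- which is precisely the situation of the remark above --- shows that $Q_j$ is simple whenever $Q_{j-1}$ is. By induction $Q_k$ is a simple extension of $B_k=R$ with $2+2(k-1)=2k$ facets, proving $\sxc[R]\le 2k$. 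The work in this argument is essentially bookkeeping: carrying out the angular arithmetic that simultaneously yields $\conv(B_{j-1}\cup\sigma_{j-1}(B_{j-1}))=B_j$ and $\ell_{j-1}\cap B_{j-1}=\emptyset$ for all $j$, and stating Proposition~\ref{TheoremReflectionExtension} and the $F=\emptyset$ case of Theorem~\ref{TheoremReflectionSimple} for a polytope presented by an extension rather than by a facet description --- the version already used in \cite{KaibelP11a} --- which requires no new idea.
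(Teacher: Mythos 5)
Your proposal is correct and follows essentially the same route as the paper: iterate the reflection construction of Proposition~\ref{TheoremReflectionExtension} along lines through the origin that miss the current sub-polygon, and invoke the $F=\emptyset$ case of Theorem~\ref{TheoremReflectionSimple} at each step, so that the extension grows by two facets per reflection. The only cosmetic differences are that you start from a segment and perform $k-1$ reflections (the paper starts from a single point and performs $k$), and that you verify the angular arithmetic directly rather than citing Theorem~3 of \cite{KaibelP11a}; the facet count $2k$ and the simplicity argument are identical.
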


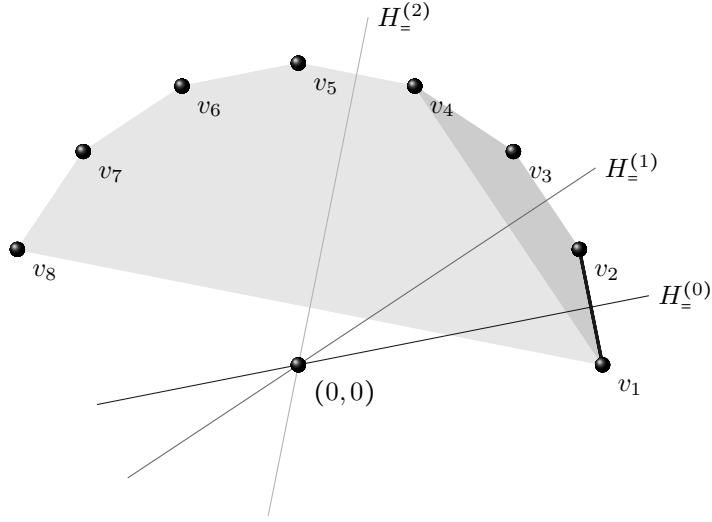
\begin{figure}[ht]
  \begin{center}
    \begin{tikzpicture}
      \piInputConfigured{TikZ-extended-formulations-m-gon.tex}[job=draw,colors,decorations]
    \end{tikzpicture}
  \end{center}
  \caption{Some Reflections used in the Proof of Theorem~\ref{TheoremRegularPolygons} for a $16$-gon.}
  \label{FigureReflections16gon}
\end{figure}

\begin{proof}
We recursively define a series of polytopes as follows:
The initial (simple) polytope is $P^{(0)} := \setdef{(1,0)}$, i.e., a single point.
Since the extensions we construct are located in increasingly higher-dimensional spaces
we write coordinates as $(x,y,z) \in \R \times \R \times \R^*$,
where the dimension of the $z$-space increases, initially being zero.

We now define for $i=0,1,2,\ldots,k-1$ the polytope $P^{(i+1)}$
as the reflection of $P^{(i)}$ at the halfspace 
$$H^{(i)}_{\leq} := \setdef{ (x,y,z) \in \R \times \R \times \R^* }[%
  -\sin((2^i-1) \cdot \pi/2^k )x + \cos((2^i-1) \cdot \pi/2^k)y \leq 0 ]. $$
Theorem~3 in \cite{KaibelP11a} shows that $P^{(k)}$ is an extension of a regular $2^k$-gon.
If we label the vertices of this $2^k$-gon with $v_1,v_2,\ldots,v_{2^k}$
in counter-clockwise order starting with
$v_1 = (1,0)$, the proof even shows that the projection of $P^{(i)}$ onto the first two coordinates
equals the convex hull of the vertices $v_1,v_2,\ldots,v_{2^i}$.
Now for every $i=0,1,\ldots,k-1$, the polytope $P^{(i)}$ does not intersect $H^{(i)}_=$
since the projection of such an intersection point would lie outside the mentioned convex hull.

This ensures that by induction all polytopes $P^{(i)}$ for $i=0,1,2,\ldots,k$ are simple
by Theorem~\ref{TheoremReflectionSimple}
and that the last polytope $P^{(k)}$ is a simple extension of the regular $2^k$-gon.
\end{proof}

\subsection{Disjunctive Programming}

The third major technique to construct extended formulations
is by means of \emph{disjunctive programming}, introduced by Balas \cite{Balas74},\cite{Balas79}.
We only consider the special case of a disjunction of two polytopes
$P_1, P_2 \subseteq \R^n$ and are interested in an extension
of the convex hull of the union of the two.

A helpful tool is the \emph{homogenization} $\homog P$ of a polytope $P$,
defined as $\homog P := \cone[P \times \setdef{1}]$,
where $\cone[\cdot]$ denotes the conic hull.
We say that a pointed polyhedral cone $C$ is \emph{weakly simple} if every
extreme ray of $C$ lies in exactly $\dim[C]-1$ facets
and \emph{strongly simple} if $C$ is a simple polyhedron.
Clearly, a strongly simple cone is also weakly simple.
Furthermore, if we have $C = \homog P$ then
$C$ is weakly simple if and only if $P$ is simple
and $C$ is strongly simple if and only if $P$ is a simplex.
We will need the following lemma about
weak simplicity of cartesian products of cones.

\begin{lemma}
  \label{TheoremSimpleCartesianProduct}
  Given two pointed polyhedral cones $C_1 \subseteq \R^{n_1}$,
  $C_2 \subseteq \R^{n_2}$,
  their product cone $C := C_1 \times C_2 \subseteq \R^{n_1+n_2}$
  is weakly simple if and only if both $C_1$ and $C_2$ are
  strongly simple.
\end{lemma}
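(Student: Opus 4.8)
The plan is to make the face structure of the product cone explicit and then reduce everything to counting facet--extreme-ray incidences. Write $d_1 := \dim[C_1]$, $d_2 := \dim[C_2]$ (so $\dim[C] = d_1 + d_2$), and let $f_i$ be the number of facets of $C_i$. I would start from the standard description of faces of a product: every face of $C_1 \times C_2$ is $F_1 \times F_2$ with $F_i$ a face of $C_i$, and $\dim[F_1 \times F_2] = \dim[F_1] + \dim[F_2]$. Because $C_1$ and $C_2$ are pointed, their only $0$-dimensional face is the apex $\{0\}$, so this immediately gives that the facets of $C$ are exactly the sets $G_1 \times C_2$ ($G_1$ a facet of $C_1$) together with the sets $C_1 \times G_2$ ($G_2$ a facet of $C_2$), and that the extreme rays of $C$ are exactly the sets $R_1 \times \{0\}$ ($R_1$ an extreme ray of $C_1$) and $\{0\} \times R_2$ ($R_2$ an extreme ray of $C_2$).

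The crucial point is then a short incidence count. Fix an extreme ray $R_1$ of $C_1$ and let $a(R_1)$ be the number of facets of $C_1$ containing it. A facet $G_1 \times C_2$ of $C$ contains $R_1 \times \{0\}$ precisely when $R_1 \subseteq G_1$, which accounts for $a(R_1)$ facets; but \emph{every} facet $C_1 \times G_2$ of $C$ contains $R_1 \times \{0\}$, since the apex of $C_2$ lies on every facet $G_2$ of $C_2$, which accounts for all $f_2$ of them. Hence $R_1 \times \{0\}$ lies in exactly $a(R_1) + f_2$ facets of $C$, and symmetrically $\{0\} \times R_2$ lies in exactly $f_1 + a(R_2)$ facets. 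Comparing with $\dim[C] - 1 = d_1 + d_2 - 1$, the cone $C$ is weakly simple if and only if $a(R_1) = d_1 + d_2 - 1 - f_2$ for every extreme ray $R_1$ of $C_1$ and $a(R_2) = d_1 + d_2 - 1 - f_1$ for every extreme ray $R_2$ of $C_2$.

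If $C_1$ and $C_2$ are strongly simple, then each is a simplicial cone, so $f_i = d_i$ and each of its extreme rays lies in exactly $d_i - 1$ of its facets; both displayed conditions hold, and $C$ is weakly simple. For the converse I would invoke two elementary facts about an arbitrary pointed cone $C_i$ of dimension $d_i$: it has at least $d_i$ facets (triviality of its lineality space forces at least $d_i$ facet inequalities), and every extreme ray of $C_i$, being a face of codimension $d_i - 1$, lies in at least $d_i - 1$ of its facets. Assuming $C$ weakly simple, the identity $a(R_1) = d_1 + d_2 - 1 - f_2$ combined with $f_2 \ge d_2$ gives $a(R_1) \le d_1 - 1$, while $a(R_1) \ge d_1 - 1$; hence $f_2 = d_2$ and $a(R_1) = d_1 - 1$, and symmetrically $f_1 = d_1$. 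So each $C_i$ has exactly $d_i$ facets, i.e., is a simplicial cone, i.e., strongly simple.

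The whole argument is essentially bookkeeping, and the one thing that genuinely has to be pinned down is the description of the facets and extreme rays of the product together with the observation that every facet of $C_2$ automatically passes through its apex: this is why an extreme ray living entirely in the $C_1$-factor ``sees'' all $f_2$ facets coming from $C_2$, and it is precisely this that lets the $C_1$-rays detect a possible non-simpliciality of $C_2$ (and symmetrically). As a sanity check, consider the product of the $2$-dimensional simplicial cone with the $3$-dimensional cone over a square; the latter is weakly but not strongly simple, and indeed in the $5$-dimensional product an extreme ray of the square cone lies in $4$ facets (one less than the dimension, as weak simplicity requires), whereas an extreme ray of the simplicial factor lies in $1 + 4 = 5$ facets, so the product fails to be weakly simple.
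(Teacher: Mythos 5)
Your proof is correct and follows essentially the same route as the paper's: describe the facets and extreme rays of the product, observe that an extreme ray coming from one factor lies on every facet coming from the other factor (since the apex lies on all facets of a pointed cone), and then use the lower bounds $a(R_i)\ge d_i-1$ and $f_i\ge d_i$ to show that the incidence count $a(R_1)+f_2$ (resp.\ $f_1+a(R_2)$) equals $d_1+d_2-1$ exactly when both cones are simplicial, i.e., strongly simple. The only differences are cosmetic: you make both directions and the two symmetric ray types fully explicit where the paper argues one case ``w.l.o.g.''
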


\begin{proof}
  It is easy to check that
  $C_1 \times C_2 = \setdef{ (x_1,x_2) \in \R^{n_1+n_2} }[ x_i \in C_i ~~i=1,2]$
  is a pointed polyhedral cone again.
  Furthermore, the faces of $C_1 \times C_2$ are exactly the products
  of faces of $C_1$ and $C_2$, their dimensions add up,
  and a face $F_1 \times F_2$ of $C_1 \times C_2$ is contained in another face
  $G_1 \times G_2$ if and only if $F_1 \subseteq G_1$ and $F_2 \subseteq G_2$ hold.

  Hence, the extreme rays of $C_1 \times C_2$ are either
  products of extreme rays of $C_1$ with $\zerovec[n_2]$ or
  products of $\zerovec[n_1]$ with extreme rays of $C_2$.
  Similarly, the facets of $C_1 \times C_2$ are either
  products of facets of $C_1$ with $C_2$ or
  products of $C_1$ with facets of $C_2$.

  We consider an extreme ray of $C$,
  w.l.o.g. of the form $r \times \zerovec[n_2]$, where $r$ is an extreme ray $r$ of $C_1$.
  It is clearly contained in the facets $F_1 \times C_2$ where $F_1$ is a facet of $C_1$ containing $r$.
  Now for every facet $F_2$ of $C_2$, 
  we have $\zerovec[n_2] \subseteq F_2$ 
  and hence $C_1 \times F_2$ contains $r \times \zerovec[n_2]$.

  Thus, if $r$ is contained in $k$ facets of $C_1$ and $C_2$ has $\ell$ facets
  then $r \times \zerovec[n_2]$ is contained in $k + \ell$ facets of $C_1 \times C_2$.

  We always have $k \geq \dim C_1 - 1$ and $\ell \geq \dim C_2$ since $C_1, C_2$ are 
  pointed polyhedral cones.
  Hence, $k + \ell \geq \dim C_1 + \dim C_2 - 1$ holds and
  we have equality if and only if $k = \dim C_1 - 1$ and $\ell = \dim C_2$ are satisfied,
  and hence $C_1$ and $C_2$ are strongly simple.
\end{proof}

We now turn to the mentioned extension of $P = \conv[P_1 \cup P_2]$.
Define 
$$Q = \setdef{ (x^1,\lambda_1,x^2,\lambda_2) \in \homog[P_1] \times \homog[P_2] }%
[ \lambda_1 + \lambda_2 = 1 ] .$$
It is well-known that $Q$ together with the projection 
$(x^1,\lambda_1,x^2,\lambda_2) \mapsto x^1 + x^2$
yields an extension of $P$.
we now characterize when $Q$ is a simple polytope.

\begin{theorem}
  The extension polytope $Q$ of the disjunctive program for
  the polytope $P = \conv[P_1 \cup P_2]$
  is simple if and only if $P_1$ and $P_2$ are simplices.
\end{theorem}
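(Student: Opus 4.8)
The plan is to view $Q$ as a hyperplane section of the product cone $C := \homog[P_1] \times \homog[P_2]$ and to reduce simplicity of $Q$ to weak simplicity of $C$, which is then settled by Lemma~\ref{TheoremSimpleCartesianProduct}. Writing $H := \setdef{(x^1,\lambda_1,x^2,\lambda_2)}[\lambda_1 + \lambda_2 = 1]$, we have $Q = C \cap H$. First I would record the basic facts about this section. The cone $C$ is pointed, being a product of the pointed cones $\homog[P_1]$ and $\homog[P_2]$, and the linear functional $\ell(x^1,\lambda_1,x^2,\lambda_2) := \lambda_1 + \lambda_2$ is strictly positive on $C \setminus \setdef{\zerovec}$, since on $\homog[P_i]$ the coordinate $\lambda_i$ is nonnegative and vanishes only at the origin. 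Hence $H$ meets the relative interior of $C$, so $\dim Q = \dim C - 1 = \dim P_1 + \dim P_2 + 1$; the recession cone of $Q$ is $C \cap \ker \ell = \setdef{\zerovec}$, so $Q$ is in fact a polytope; and $H$ meets every extreme ray of $C$ in exactly one point.

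Because the section hyperplane $H$ is \emph{transversal} to all extreme rays of the pointed cone $C$, the assignment $G \mapsto G \cap H$ is an inclusion-preserving bijection from the nonempty faces of $C$ to the nonempty faces of $Q$, dropping dimension by exactly one. In particular, extreme rays of $C$ correspond to vertices of $Q$, facets of $C$ correspond to facets of $Q$, and a vertex $v = r \cap H$ of $Q$ lies on a facet $F \cap H$ of $Q$ if and only if $r \subseteq F$. Consequently, the number of facets of $Q$ through $v$ equals the number of facets of $C$ containing the extreme ray $r$. Therefore $Q$ is simple — i.e. each vertex lies on exactly $\dim Q = \dim C - 1$ facets — if and only if every extreme ray of $C$ lies on exactly $\dim C - 1$ facets of $C$, that is, if and only if $C$ is weakly simple.

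It remains to invoke the earlier results. By Lemma~\ref{TheoremSimpleCartesianProduct}, the product cone $C = \homog[P_1] \times \homog[P_2]$ is weakly simple if and only if both $\homog[P_1]$ and $\homog[P_2]$ are strongly simple; and by the observation recorded just before that lemma, $\homog[P_i]$ is strongly simple exactly when $P_i$ is a simplex. Chaining these equivalences yields that $Q$ is simple if and only if $P_1$ and $P_2$ are both simplices. The one point that needs genuine care (rather than bookkeeping) is the face-correspondence between a pointed cone and a transversal hyperplane section used in the second paragraph: one must verify that each facet of $C$, being itself a pointed cone equal to the conic hull of the extreme rays it contains, meets $H$ in a facet of $Q$, and that distinct facets of $C$ give distinct facets of $Q$. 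This is classical; everything else reduces to the bookkeeping above. (Throughout we assume, as is implicit in the setup, that $P_1$ and $P_2$ are nonempty.)
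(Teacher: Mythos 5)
Your proposal is correct and follows essentially the same route as the paper: view $Q$ as the transversal hyperplane section of the pointed product cone $C = \homog[P_1] \times \homog[P_2]$, reduce simplicity of $Q$ to weak simplicity of $C$, and conclude via Lemma~\ref{TheoremSimpleCartesianProduct} together with the fact that $\homog[P_i]$ is strongly simple exactly when $P_i$ is a simplex. The paper states the cone--section correspondence without proof; you have merely spelled out those standard details.
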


\begin{proof}
  As $Q$ is the intersection of the pointed cone
  $C = \homog[P_1] \times \homog[P_2]$
  with the hyperplane defined by $\lambda_1 + \lambda_2 = 1$
  (which does not contain any of $C$'s extreme rays),
  we know that $Q$ is simple if and only if $C$ is weakly simple.
  Now Lemma~\ref{TheoremSimpleCartesianProduct} yields the result.
\end{proof}

\pagebreak[3]
\section{Bounding Techniques}
\label{SectionTechniques}

Let $P \subseteq \R^n$ be a polytope with $N$ vertices.
The faces of $P$ form a graded lattice $\polyLattice[P]$,
ordered by inclusion (see \cite{Ziegler01}).

Clearly, $P$ is the set of all convex combinations of its vertices,
immediately providing an extended formulation of size $N$:
\begin{equation*}
  P = \proj[x] \setdef{ (x,y) \in \R^n \times \R_+^V }[
    x = \sum_{v \in V} y_v v,~ \sum_{v \in V} y_v = 1]
\end{equation*}
Here, $\proj[x][\cdot]$ denotes the projection onto the space of $x$-variables
and $V$ is the set of vertices of $P$.
Note that this \emph{trivial extension} is simple since the extension is an $(N-1)$-simplex.

An easy observation for extensions $P = \pi(Q)$ with $Q \subseteq \R^d$ and $\pi : \R^d \to \R^n$ is that
the assignment $F \mapsto \pi^{-1}(F) \cap Q = \setdef{ y \in Q }[ \pi(y) \in F ]$ defines
a map $j$ which embeds $\polyLattice[P]$ into $\polyLattice[Q]$,
i.e., it is one-to-one and preserves inclusion in both directions (see \cite{FioriniKPT11}).
Note that this embedding furthermore satisfies
$j(F \cap F') = j(F) \cap j(F')$ for all faces $F,F'$ of $P$
(where the nontrivial inclusion $j(F) \cap j(F') \subseteq j(F \cap F')$
follows from $\pi(j(F) \cap j(F')) \subseteq \pi(j(F)) \cap \pi(j(F')) = F \cap F'$).
We use the shorthand notation $j(v) := j(\setdef{v})$ for vertices $v$ of $P$.

We consider the \emph{face-vertex non-incidence graph} $\BicliqueGraph[P]$
which is a bipartite graph having the faces and the vertices of $P$
as the node set and edges $\setdef{F,v}$ for all $v \notin F$.
Every facet $\hat{f}$ of an extension induces two node sets of this graph
in the following way:
\begin{gather}
  \begin{array}{rcl}
    \BicliqueFaces[\hat{f}] & := & 
      \setdef{ F \text{ face of } P }[ j(F) \subseteq \hat{f} ] \\
    \BicliqueVertices[\hat{f}] & := & 
      \setdef{ v \text{ vertex of } P }[ j(v) \not\subseteq \hat{f} ] 
  \end{array}
\end{gather}
We call $\BicliqueFaces[\hat{f}]$ and $\BicliqueVertices[\hat{f}]$ 
the \emph{set of faces} (resp. \emph{vertices}) \emph{induced by the facet} $\hat{f}$ 
(with respect to the extension $P = \pi(Q)$).
Typically, the extension and the facet $\hat{f}$ are fixed and
we just write $\BicliqueFaces$ (resp. $\BicliqueVertices$).
It may happen that $\BicliqueVertices[\hat{f}]$ is equal to the whole vertex set,
e.g., if $\hat{f}$ projects into the relative interior of $P$.
If $\BicliqueVertices[\hat{f}]$ is a proper subset of the vertex set
we call facet $\hat{f}$ \emph{proper} w.r.t. the projection.

For each facet $\hat{f}$ of an extension of $P$ 
the face and vertex sets $\BicliqueFaces[\hat{f}]$, $\BicliqueVertices[\hat{f}]$ together induce a biclique (i.e., complete bipartite subgraph)
in $\BicliqueGraph[P]$.
It follows from Yannakakis \cite{Yannakakis91} that every edge in $\BicliqueGraph[P]$
is covered by at least one of those induced bicliques.
We provide a brief combinatorial argument for this 
(in particular showing that we can restrict to proper facets)
in the proof of the following proposition.

\pagebreak[3]

\begin{proposition}
  \label{TheoremBicliqueCovering}
  Let $P = \pi(Q)$ be an extension.

  Then the subgraph of $\BicliqueGraph[P]$ induced by 
  $\BicliqueFaces[\hat{f}] \cupdot \BicliqueVertices[\hat{f}]$
  is a biclique for every facet $\hat{f}$ of $Q$.
  Furthermore, every edge $\setdef{F, v}$ of $\BicliqueGraph[P]$
  is covered by at least one of the bicliques induced
  by a proper facet.
\end{proposition}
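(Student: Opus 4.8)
The plan is to verify the two assertions in turn, using only the embedding $j\colon\polyLattice[P]\to\polyLattice[Q]$ and its intersection-compatibility noted above.

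For the \emph{biclique} claim, fix a facet $\hat f$ of $Q$. I must show that whenever $F\in\BicliqueFaces[\hat f]$ and $v\in\BicliqueVertices[\hat f]$, the pair $\{F,v\}$ is an edge of $\BicliqueGraph[P]$, i.e.\ $v\notin F$. Suppose instead that $v\in F$. Then $\{v\}\subseteq F$, so by monotonicity of $j$ we get $j(v)\subseteq j(F)\subseteq\hat f$, where the last inclusion is the defining property of $F\in\BicliqueFaces[\hat f]$. But $j(v)\subseteq\hat f$ contradicts $v\in\BicliqueVertices[\hat f]$. Hence $v\notin F$, so every such pair is an edge, and the induced subgraph on $\BicliqueFaces[\hat f]\cupdot\BicliqueVertices[\hat f]$ is complete bipartite.

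For the \emph{covering} claim, fix an edge $\{F,v\}$ of $\BicliqueGraph[P]$, so $v\notin F$. Pick any vertex $w$ of $Q$ with $\pi(w)=v$ (such $w$ exists because $v$ is a vertex of $P=\pi(Q)$, so $\pi^{-1}(v)\cap Q$ is a nonempty face of $Q$ and therefore has a vertex). Since $w\notin j(F)=\pi^{-1}(F)\cap Q$ — indeed $\pi(w)=v\notin F$ — and $j(F)$ is a face of $Q$ that does not contain $w$, there is a facet $\hat f$ of $Q$ with $j(F)\subseteq\hat f$ but $w\notin\hat f$; concretely, $j(F)$ is the intersection of the facets of $Q$ containing it, and not all of them can contain $w$. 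From $j(F)\subseteq\hat f$ we get $F\in\BicliqueFaces[\hat f]$. From $w\notin\hat f$ and $w\in j(v)=\pi^{-1}(v)\cap Q$ we get $j(v)\not\subseteq\hat f$, i.e.\ $v\in\BicliqueVertices[\hat f]$. Thus the biclique induced by $\hat f$ contains the edge $\{F,v\}$. Finally, this $\hat f$ is proper: $w$ is a vertex of $Q$ with $\pi(w)=v$, and $w\notin\hat f$ means $j(v)\not\subseteq\hat f$, so $v\notin\BicliqueVertices[\hat f]$ fails — wait, rather $v\in\BicliqueVertices[\hat f]$ as just shown, so to see properness note instead that $j(\emptyset)=\emptyset\subseteq\hat f$ is not what we want; the right observation is that $\hat f$, being a facet of $Q$, is cut out by a single inequality, hence there is a vertex $u$ of $Q$ lying on $\hat f$, and then $\pi(u)$ is a vertex of $P$ with $j(\pi(u))\cap\hat f\neq\emptyset$. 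More simply: a facet $\hat f$ is improper exactly when $j(v)\not\subseteq\hat f$ for \emph{every} vertex $v$, i.e.\ $\hat f$ contains no $j(v)$; but $j(F)\subseteq\hat f$ together with $F\neq\emptyset$ forces $\hat f$ to contain $j(F)$, which contains $j(v')$ for any vertex $v'$ of $F$, so $\hat f$ is proper. Hence every edge is covered by a biclique induced by a \emph{proper} facet.

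The one genuinely delicate point is the existence, in the covering argument, of a facet of $Q$ that contains the face $j(F)$ but misses the chosen vertex $w$; this is where the structure of $Q$ as a polytope (every proper face is the intersection of the facets containing it, and a vertex not in a face must be separated from it by some such facet) is used, and it is the step I would write out most carefully. Everything else is a direct manipulation of the inclusion-preserving property of $j$ together with the elementary fact that vertices of $P$ lift to vertices of $Q$.
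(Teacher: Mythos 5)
Your biclique argument and the main covering step are essentially the paper's: both rest on monotonicity of the embedding $j$ and on coatomicity of $\polyLattice[Q]$ (your detour through a vertex $w$ of $Q$ lying over $v$ is just an equivalent way of producing a facet through $j(F)$ that misses $j(v)$, and is fine).

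There is, however, a genuine gap in the properness part. The node set of $\BicliqueGraph[P]$ consists of \emph{all} faces of $P$, including the empty face, so $\setdef{\emptyset,v}$ is an edge for every vertex $v$, and the claim requires it to be covered by a biclique of a \emph{proper} facet. Your final properness argument explicitly assumes $F\neq\emptyset$: it extracts a vertex $v'$ of $F$ and concludes $j(v')\subseteq\hat f$. For $F=\emptyset$ it says nothing, and the facet your construction produces need not be proper --- all you know about it is $\emptyset\subseteq\hat f$ and $j(v)\not\subseteq\hat f$, which is perfectly consistent with $\hat f$ projecting into the relative interior of $P$, i.e.\ with $\BicliqueVertices[\hat f]$ being the whole vertex set. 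The paper closes this case with a separate argument: choose any vertex $w$ of $P$ with $w\neq v$; since $j(v)\not\subseteq j(w)$, coatomicity yields a facet $\hat f$ with $j(w)\subseteq\hat f$ and $j(v)\not\subseteq\hat f$, which is proper because it contains $j(w)$, and it still covers $\setdef{\emptyset,v}$ because $\emptyset\in\BicliqueFaces[\hat f]$ trivially. You should add this case (equivalently: run your own argument on the edge $\setdef{\setdef{w},v}$ and observe that the resulting proper facet also covers $\setdef{\emptyset,v}$), and clean up the false starts in the properness paragraph.
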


\begin{proof}
  Let $\hat{f}$ be one of the facets and assume that an edge $\setdef{F,v}$
  with $F \in \BicliqueFaces[\hat{f}]$ and $v \in \BicliqueVertices[\hat{f}]$
  is not present in $\BicliqueGraph[P]$, i.e., $v \in F$.
  From $v \in F$ we obtain $j(v) \subseteq j(F) \subseteq \hat{f}$,
  a contradiction to $v \in \BicliqueVertices[\hat{f}]$.

  To prove the second statement, let $\setdef{F,v}$ be any edge
  of $\BicliqueGraph[P]$, i.e., $v \notin F$.
  Observe that the preimages $G := j(F)$ and $g := j(v)$
  are also not incident since $j$ is a lattice embedding.
  As $G$ is the intersection of all facets of $Q$ it is contained in
  (the face-lattice of a polytope is coatomic),
  there must be at least one facet $\hat{f}$ containing $G$ but not
  $g$ (since otherwise $g$ would be contained in $G$), yielding
  $F \in \BicliqueFaces[\hat{f}]$ and $v \in \BicliqueVertices[\hat{f}]$.

  If $F \neq \emptyset$, any vertex $w \in F$ satisfies 
  $j(w) \subseteq G \subseteq \hat{f}$ and hence $\hat{f}$ is a proper facet.
  If $F = \emptyset$, let $w$ be any vertex of $P$ distinct from $v$.
  The preimages $j(v)$ and $j(w)$ clearly satisfy
  $j(v) \not\subseteq j(w)$. 
  Again, since the face-lattice of $Q$ is coatomic,
  there exists a facet $\hat{f}$ with $j(w) \subseteq \hat{f}$
  but $j(v) \not\subseteq \hat{f}$.
  Hence, $\hat{f}$ is a proper facet and (since $\emptyset = F \subseteq \hat{f}$)
  $F \in \BicliqueFaces[\hat{f}]$ and $v \in \BicliqueVertices[\hat{f}]$ holds.
\end{proof}

Before moving on to simple extensions we mention two useful properties
of the induced sets. 
Both can be easily verified by examining the definitions of $\BicliqueFaces$
and $\BicliqueVertices$. 
See Figure~\ref{FigureBicliquesFaceLattice} for an illustration.

\begin{lemma}
  \label{TheoremBicliqueVerticesAndSubfaces}
  Let $\BicliqueFaces$ and $\BicliqueVertices$ be the face and vertex sets 
  induced by a facet of an extension of $P$, respectively.
  Then $\BicliqueFaces$ is closed under taking subfaces
  and $\BicliqueVertices = \setdef{ v \text{ vertex of } P }[ v \notin \bigcup_{F \in \BicliqueFaces} F ]$ holds.
\end{lemma}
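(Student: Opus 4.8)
The plan is to verify both assertions directly from the definitions of $\BicliqueFaces$ and $\BicliqueVertices$, using only the fact (recalled just before the lemma) that $j$ is a lattice embedding of $\polyLattice[P]$ into $\polyLattice[Q]$, in particular that $j$ preserves inclusion in both directions.

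First I would prove that $\BicliqueFaces$ is closed under taking subfaces. Fix a facet $\hat{f}$ of the extension and let $F \in \BicliqueFaces$, i.e., $j(F) \subseteq \hat{f}$. If $F'$ is any face of $P$ with $F' \subseteq F$, then inclusion-preservation of $j$ gives $j(F') \subseteq j(F) \subseteq \hat{f}$, so $F' \in \BicliqueFaces$. (In particular $\emptyset \in \BicliqueFaces$, since $j(\emptyset) = \emptyset$.)

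Next I would establish the identity for $\BicliqueVertices$. The crux is the elementary observation that, for a vertex $v$ of $P$, one has $v \in \bigcup_{F \in \BicliqueFaces} F$ if and only if the singleton face $\setdef{v}$ belongs to $\BicliqueFaces$: the "if" part is immediate because $v \in \setdef{v}$, and the "only if" part follows from the closure just proven, since whenever a face $F \in \BicliqueFaces$ contains $v$ we have $\setdef{v} \subseteq F$ and hence $\setdef{v} \in \BicliqueFaces$. Now by definition $\setdef{v} \in \BicliqueFaces$ means exactly $j(v) = j(\setdef{v}) \subseteq \hat{f}$, which is precisely the negation of the defining condition $j(v) \not\subseteq \hat{f}$ for $v \in \BicliqueVertices$. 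Chaining the two equivalences gives $v \in \BicliqueVertices$ iff $v \notin \bigcup_{F \in \BicliqueFaces} F$, as claimed.

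Each step is a one-line consequence of the definitions, so there is no genuine obstacle; the only point deserving a moment's attention is the basic polytope fact that a vertex $v$ lies in a face $F$ exactly when $\setdef{v}$ is a subface of $F$, which is what allows the passage between the set-theoretic union $\bigcup_{F \in \BicliqueFaces} F$ and membership of $\setdef{v}$ in $\BicliqueFaces$.
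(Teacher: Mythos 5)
Your proof is correct and is exactly the routine verification from the definitions that the paper itself invokes (the paper omits the proof, stating only that both claims "can be easily verified by examining the definitions"). The two key observations you supply — inclusion-preservation of the lattice embedding $j$ for closure under subfaces, and the equivalence $v \in \bigcup_{F \in \BicliqueFaces} F \Leftrightarrow \setdef{v} \in \BicliqueFaces$ for the vertex-set identity — are precisely the intended argument.
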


\begin{figure}[ht]
  \begin{center}
    \begin{tikzpicture}
      \piInputConfigured{TikZ-extended-formulations-bicliques.tex}%
        [job=face-vertex-biclique,\tikzOptions]
    \end{tikzpicture}
  \end{center}
  \caption{The Sets $\BicliqueFaces$ and $\BicliqueVertices$ in the Face Lattice.}
  \label{FigureBicliquesFaceLattice}
\end{figure}

For the remainder of this section we assume that the extension polytope $Q$
is a \emph{simple} polytope and that $\BicliqueFaces$ and $\BicliqueVertices$
are face and vertex sets induced by a facet of $Q$.

\begin{theorem}
  \label{TheoremSimpleBicliqueFaces}
  Let $\BicliqueFaces$ and $\BicliqueVertices$ be the face and vertex sets 
  induced by a facet of a \emph{simple} extension of $P$, respectively.
  Then
  \begin{enumerate}[(a)]
  \item
    all pairs $(F,F')$ of faces of $P$ with $F \cap F' \neq \emptyset$
    and $F,F' \notin \BicliqueFaces$
    satisfy $F \cap F' \notin \BicliqueFaces$,
  \item
    the (inclusion-wise) maximal elements in $\BicliqueFaces$ are facets of $P$,
  \item
    and every vertex $v \notin \BicliqueVertices$ is contained in
    some facet $F$ of $P$ with $F \in \BicliqueFaces$.
  \end{enumerate}
\end{theorem}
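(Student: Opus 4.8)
The plan is to exploit the single structural fact we have about $Q$: it is simple, so a facet $\hat f$ of $Q$ is itself a simple polytope of dimension $\dim Q - 1$, and—crucially—every face $G$ of $Q$ that is \emph{not} contained in $\hat f$ meets $\hat f$ in a face of codimension (in $G$) exactly one, i.e.\ $\dim(G \cap \hat f) = \dim G - 1$ whenever $G \cap \hat f \neq \emptyset$ and $G \not\subseteq \hat f$. This is the standard ``a hyperplane-defining facet of a simple polytope cuts every face either entirely or in a facet of that face'' observation, and it is the engine for all three parts. Throughout I would write $G := j(F)$, $G' := j(F')$, $g := j(v)$ for the images of faces/vertices of $P$ under the lattice embedding $j$, and recall from the discussion preceding the statement that $j$ is an inclusion-preserving, inclusion-reflecting lattice embedding with $j(F \cap F') = j(F) \cap j(F')$, and that $F \in \BicliqueFaces$ iff $j(F) \subseteq \hat f$, while $v \in \BicliqueVertices$ iff $j(v) \not\subseteq \hat f$.

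For part~(a): suppose $F, F' \notin \BicliqueFaces$, so $G \not\subseteq \hat f$ and $G' \not\subseteq \hat f$, and suppose $F \cap F' \neq \emptyset$, so $G \cap G' = j(F \cap F') \neq \emptyset$. I want $G \cap G' \not\subseteq \hat f$. Consider $G$: since $G \not\subseteq \hat f$ but $G \cap \hat f$ may be nonempty, simplicity of $Q$ gives $\dim(G \cap \hat f) = \dim G - 1$ (or $G \cap \hat f = \emptyset$). The cleanest route is: if $G \cap G' \subseteq \hat f$ held, then $G \cap G' \subseteq G \cap \hat f$; but $G \cap \hat f$ is a proper face of the simple polytope $G$ while $G'$ meets $G$, and I'd argue that the face $G \cap G'$ of $G$ being contained in the facet $G \cap \hat f$ of $G$ forces, by a dimension/adjacency count inside the simple polytope $G$ mirroring the argument in the proof of Theorem~\ref{TheoremReflectionSimple}, a contradiction unless $G' \cap G \subseteq \hat f$ in a way incompatible with $G' \not\subseteq \hat f$. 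More directly: pick a vertex $w$ of $G \cap G'$; in the simple polytope $Q$, $w$ lies in $\dim Q$ facets; the facets through $w$ that contain $G$ form one group, those containing $G'$ another, those containing $\hat f$ a third, and a counting identity (each facet of $Q$ corresponds to a coordinate direction at the simple vertex $w$) shows that $G$, $G'$, $\hat f$ cannot pairwise avoid ``using up'' directions in a way that puts $G \cap G'$ inside $\hat f$ while keeping $G, G' \not\subseteq \hat f$. Translating back through $j$ gives $F \cap F' \notin \BicliqueFaces$.

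For part~(b): let $F \in \BicliqueFaces$ be inclusion-maximal; I must show $F$ is a facet of $P$, equivalently (since $F \neq P$ because $\BicliqueVertices$ is nonempty for a proper facet, and a facet that is not proper contributes $\BicliqueFaces = \emptyset$) that no face of $P$ strictly between $F$ and $P$ exists in... no, rather that $\dim F = \dim P - 1$. The idea: $G = j(F) \subseteq \hat f$. If $F$ is not a facet, there is a face $\tilde F$ of $P$ with $F \subsetneq \tilde F \subsetneq P$ and $\dim \tilde F = \dim F + 1$; by maximality $\tilde F \notin \BicliqueFaces$, so $j(\tilde F) \not\subseteq \hat f$. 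But $j(\tilde F) \supseteq j(F) = G \subseteq \hat f$, so $j(\tilde F) \cap \hat f$ is a proper face of the simple polytope $j(\tilde F)$ containing $G$; by simplicity it has dimension $\dim j(\tilde F) - 1$, and $G = j(F)$ sits inside it with $\dim G$ possibly much smaller than $\dim j(\tilde F)$—this is allowed in general, so I need the sharper input: repeatedly, every cover $\tilde F \gtrdot F$ in $\polyLattice[P]$ with $\tilde F \neq P$ lies outside $\BicliqueFaces$, meaning every face of $Q$ covering $G$ (other than those still inside $\hat f$, but $G$ maximal-in-$\BicliqueFaces$ under $j$ does not literally say $G$ is maximal among faces of $Q$ inside $\hat f$). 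So the honest argument uses Lemma~\ref{TheoremBicliqueVerticesAndSubfaces} ($\BicliqueFaces$ closed under subfaces) plus part~(a): the maximal elements of $\BicliqueFaces$ are pairwise such that any two with nonempty intersection have their intersection outside $\BicliqueFaces$—impossible since the intersection is a subface, hence in $\BicliqueFaces$—so distinct maximal elements are disjoint; then an Euler-type / covering argument (every vertex not in $\BicliqueVertices$ lies under some member of $\BicliqueFaces$, which is part (c)) forces these disjoint maximal faces to partition $\{v : v \notin \BicliqueVertices\}$, and a disjoint face that is not a facet can be enlarged within $P$ without hitting $\BicliqueVertices$, contradicting maximality. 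I expect to prove (c) first and feed it into (b).

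For part~(c): take $v \notin \BicliqueVertices$, i.e.\ $g := j(v) \subseteq \hat f$. I want a facet $F$ of $P$ with $v \in F$ and $j(F) \subseteq \hat f$. Consider all facets of $P$ containing $v$: their images under $j$ are faces of $Q$ containing $g$. Since $Q$ is simple, $v$... rather, the vertex $g$ of $Q$ (it need not be a vertex of $Q$, only $g = j(v)$, but $j(v)$ is a \emph{minimal} nonempty face hence a vertex of $Q$—this is the key point, that $j$ of a vertex is a vertex, which I should justify: $j$ preserves the lattice so sends the atom $\{v\}$... actually $j$ need not send atoms to atoms; rather $j(v)$ is a face of $Q$ whose only subface in the image is $j(\emptyset) = \emptyset$, but $Q$ may have vertices below it). Hmm—the robust version: $g = j(v)$ is a nonempty face of $Q$ contained in $\hat f$; pick any vertex $w$ of $Q$ with $w \in g \subseteq \hat f$; in the simple polytope $Q$, $w$ lies in exactly $\dim Q$ facets, and $\hat f$ is one of them; the remaining facets through $w$ correspond to the $\dim Q - 1$ edges of $Q$ at $w$. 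Now use the lattice embedding backwards: the facets of $P$ containing $v$ correspond, via $j$, to certain faces of $Q$ containing $g$ hence containing $w$; I claim at least one such face is contained in $\hat f$, because if \emph{every} facet of $P$ through $v$ had its $j$-image sticking out of $\hat f$, then—taking the meet of all facets of $P$ through $v$, which is $\{v\}$—we would get $\bigcap j(F_i) = j(v) = g \subseteq \hat f$ as an intersection of faces each not contained in $\hat f$, and applying part~(a) inductively (intersection of things outside $\BicliqueFaces$ stays outside $\BicliqueFaces$, as long as intersections remain nonempty, which they do since they all contain $v$) yields $g \not\subseteq \hat f$, i.e.\ $v \in \BicliqueVertices$, a contradiction. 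Hence some facet $F$ of $P$ through $v$ has $j(F) \subseteq \hat f$, i.e.\ $F \in \BicliqueFaces$, which is exactly (c).

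The main obstacle I anticipate is part~(a)—making the ``three groups of facet-directions at a simple vertex'' counting argument fully rigorous, specifically verifying that if $w$ is a vertex of $Q$ lying in $G \cap G' \cap \hat f$ and $G, G'$ are faces with $G \not\subseteq \hat f$, $G' \not\subseteq \hat f$, then $G \cap G' \not\subseteq \hat f$; the right formulation is that in a simple polytope the face $G \cap G'$ equals the intersection of the facets through $w$ that contain \emph{both} $G$ and $G'$, and $\hat f$ is not among those facets precisely because $\hat f$ contains neither $G$ nor $G'$, so $\hat f$ does not appear in the facet-intersection defining $G \cap G'$, whence $G \cap G' \not\subseteq \hat f$ (using that $w$ is simple, so distinct facets through $w$ give distinct supporting hyperplanes and $G \cap G'$ is cut out at $w$ by exactly the common ones). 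Once (a) is nailed, (c) follows by the meet argument above and (b) follows from (a)+(c)+Lemma~\ref{TheoremBicliqueVerticesAndSubfaces} by the disjoint-partition argument; these latter two are bookkeeping rather than genuine difficulty.
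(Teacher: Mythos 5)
Your part~(a) is, once the final ``right formulation'' paragraph is taken, essentially the paper's argument: the paper phrases it as ``the interval $[j(F\cap F'),Q]$ in $\polyLattice[Q]$ is Boolean because $Q$ is simple, so the coatom $\hat f$ must contain $j(F)$ or $j(F')$ since their meet is $j(F\cap F')$,'' which is the same as your observation that the facets of $Q$ through a vertex $w$ of $j(F\cap F')$ that contain $j(F)\cap j(F')$ are exactly those containing $j(F)$ together with those containing $j(F')$. (Note a wording slip: you want the facets containing \emph{at least one} of $G,G'$, not ``both''; your next sentence shows you mean the right set.) Your part~(c) is a correct alternative route --- iterate (a) over the facets of $P$ through $v$, whose intersection $\setdef{v}$ always stays nonempty --- whereas the paper derives (c) from (b) plus Lemma~\ref{TheoremBicliqueVerticesAndSubfaces}; both are fine.

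The genuine gap is part~(b). You claim that two distinct maximal elements of $\BicliqueFaces$ with nonempty intersection would, ``by (a),'' have their intersection outside $\BicliqueFaces$, and you derive disjointness from the resulting contradiction. But (a) has as hypothesis $F,F'\notin\BicliqueFaces$; it says nothing about pairs of faces that \emph{are} in $\BicliqueFaces$. Indeed the intersection of two members of $\BicliqueFaces$ is a subface of each and hence lies in $\BicliqueFaces$ by Lemma~\ref{TheoremBicliqueVerticesAndSubfaces} --- there is no contradiction, and the disjointness claim is false in general (already for the trivial simplex extension, where $\BicliqueFaces$ consists of all faces avoiding one vertex and its maximal elements are many pairwise intersecting facets). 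The subsequent ``Euler-type / covering'' and ``can be enlarged without hitting $\BicliqueVertices$'' steps are unsubstantiated, and (c) alone does not rescue them: different vertices of a maximal non-facet $F\in\BicliqueFaces$ could a priori be covered by different facets from $\BicliqueFaces$. The correct argument is a direct application of (a) in the contrapositive direction you did not use: if $F\in\BicliqueFaces$ is inclusion-maximal but not a facet of $P$, write $F=F_1\cap F_2$ with faces $F_1,F_2$ of $P$ properly containing $F$ (possible since $F$ has codimension at least two); maximality forces $F_1,F_2\notin\BicliqueFaces$, and then (a) yields $F=F_1\cap F_2\notin\BicliqueFaces$, a contradiction. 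This is the paper's one-line proof of (b).
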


\begin{proof}
  Let $\hat{f}$ be the facet of $Q$ inducing $\BicliqueFaces$ and $\BicliqueVertices$
  and $F,F'$ two faces of $P$ with non-empty intersection.
  Since $F \cap F' \neq \emptyset$, we have $j(F \cap F') \neq \emptyset$,
  thus the interval in $\polyLattice[Q]$ between $j(F \cap F')$
  and $Q$ is a Boolean lattice, i.e., isomorphic to the face-lattice of a simplex,
  (because $Q$ is simple, see Proposition~2.16~in~\cite{Ziegler01}).
  Suppose $F \cap F' \in \BicliqueFaces[\hat{f}]$.
  Then $\hat{f}$ is contained in that interval and it is a coatom,
  hence it contains at least one of $j(F)$ and $j(F')$ due to $j(F) \cap j(F') = j(F \cap F')$.
  But this implies $j(F) \in \BicliqueFaces$ or $j(F') \in \BicliqueFaces$,
  proving (a).

  For (b), let $F$ be an inclusion-wise maximal face in $\BicliqueFaces$ but not a facet of $P$.
  Then $F$ is the intersection of two faces $F_1$ and $F_2$ of $P$ properly containing $F$.
  Due to the maximality of $F$, $F_1, F_2 \notin \BicliqueFaces$
  but $F_1 \cap F_2 \in \BicliqueFaces$, contradicting (a).

  Statement (c) follows directly from (b) and Lemma~\ref{TheoremBicliqueVerticesAndSubfaces}.
\end{proof}

In order to use the Theorem~\ref{TheoremSimpleBicliqueFaces} for deriving lower bounds on
the sizes of simple extensions of a polytope $P$, 
one needs to have good knowledge of parts of the face lattice of $P$.
The part one usually knows most about is formed by the vertices and edges of $P$.
Therefore, we specialize Theorem~\ref{TheoremSimpleBicliqueFaces} to these faces for later use.

Let $G = (V,E)$ be a graph and denote by $\delta(W) \subseteq E$ the cut-set of
a node-set $W$.
Define the \emph{common neighbor operator} $\CommonNeighbors[\cdot]$ by
\begin{equation}
  \label{EquationCommonNeighbors}
  \CommonNeighbors[W] := 
    W \cup \setdef{v \in V}[\exists \setdef{u,v},\setdef{v,w} \in \delta(W) : u \neq w] \ .
\end{equation}
A set $W \subseteq V$ is then a (proper) \emph{common neighbor closed}
(for short \emph{$\CommonNeighbors$-closed}) set if
$\CommonNeighbors[W] = W$ (and $W \neq V$) holds.
We call sets $W$ with a minimum node distance of at least $3$
(i.e., the distance-$2$-neighborhood of a node $w \in W$ 
 does not contain another node $w' \in W$)
\emph{isolated}.
Isolated node sets are clearly $\CommonNeighbors$-closed.
Note that singleton sets are isolated and hence proper $\CommonNeighbors$-closed.
In particular, the vertex sets induced by the facets of the
trivial extension (see beginning of Section~\ref{SectionTechniques})
are the singleton sets.

Using this notion, we obtain the following corollary of Theorem~\ref{TheoremSimpleBicliqueFaces}.

\begin{corollary}
  \label{TheoremSimpleBicliqueVertices}
  The vertex set $\BicliqueVertices$
  induced by a proper facet of a \emph{simple} extension of $P$
  is a proper $\CommonNeighbors$-closed set.
\end{corollary}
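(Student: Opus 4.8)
The plan is to deduce the corollary directly from Theorem~\ref{TheoremSimpleBicliqueFaces}, using only the vertices and edges of~$P$ as the relevant faces. Let $\hat f$ be the proper facet of the simple extension inducing $\BicliqueFaces$ and $\BicliqueVertices$, and set $W := \BicliqueVertices$. Since $\hat f$ is proper, $W$ is a proper subset of the vertex set, so it only remains to show $\CommonNeighbors[W] = W$, i.e.\ that $W$ is closed under the common-neighbor operator from~\eqref{EquationCommonNeighbors}. The inclusion $W \subseteq \CommonNeighbors[W]$ is immediate from the definition, so the content is the reverse inclusion: if $v$ is a vertex with two edges $\{u,v\}, \{v,w\} \in \delta(W)$ for distinct $u \neq w$ (so $v \notin W$, hence $u, w \in W$), then we must derive a contradiction.

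First I would translate ``$v \notin W = \BicliqueVertices$'' via Lemma~\ref{TheoremBicliqueVerticesAndSubfaces}: this means $v \in \bigcup_{F \in \BicliqueFaces} F$, so there is a face $F_v \in \BicliqueFaces$ containing~$v$, and by closure under subfaces we may take $F_v = \{v\}$, i.e.\ $\{v\} \in \BicliqueFaces$. Next, consider the two edges $e := \conv\{u,v\}$ and $e' := \conv\{v,w\}$ of~$P$; they intersect in $\{v\} \neq \emptyset$. I claim neither $e$ nor $e'$ lies in $\BicliqueFaces$: indeed $e$ contains the vertex $u \in W = \BicliqueVertices$, and by Lemma~\ref{TheoremBicliqueVerticesAndSubfaces} again, a vertex in $\BicliqueVertices$ cannot lie in any face belonging to $\BicliqueFaces$ (since $\BicliqueVertices$ consists exactly of the vertices not covered by $\bigcup_{F \in \BicliqueFaces} F$); hence $e \notin \BicliqueFaces$, and symmetrically $e' \notin \BicliqueFaces$ using $w \in W$. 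But then $e, e' \notin \BicliqueFaces$ while $e \cap e' = \{v\} \in \BicliqueFaces$, which directly contradicts part~(a) of Theorem~\ref{TheoremSimpleBicliqueFaces}. This contradiction shows no such $v$ exists, so $\CommonNeighbors[W] \subseteq W$ and $W$ is $\CommonNeighbors$-closed.

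The only point requiring a little care — and the step I would flag as the main (minor) obstacle — is the ``distinct $u \neq w$'' hypothesis: it is exactly what guarantees that $e$ and $e'$ are two \emph{different} edges whose intersection is precisely the single vertex $\{v\}$ rather than all of $e$. If $u = w$ were allowed, $e = e'$ and $e \cap e' = e$, and the argument would not produce a contradiction; this is why the common-neighbor operator in~\eqref{EquationCommonNeighbors} insists on $u \neq w$, and the proof uses it in an essential way. Everything else is a routine unwinding of the definitions of $\BicliqueFaces$, $\BicliqueVertices$, and of $\CommonNeighbors[\cdot]$, together with the already-established facts that $\BicliqueFaces$ is closed under subfaces and that a proper facet induces a proper $\BicliqueVertices$.
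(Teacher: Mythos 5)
Your proof is correct and follows essentially the same route as the paper: both arguments apply Theorem~\ref{TheoremSimpleBicliqueFaces}~(a) to the two distinct edges $\{u,v\}$ and $\{v,w\}$ meeting in $\{v\}$ and translate membership in $\BicliqueVertices$ via Lemma~\ref{TheoremBicliqueVerticesAndSubfaces}; you phrase it as a direct contradiction where the paper uses the contrapositive, which is only a cosmetic difference. Your remark on the necessity of $u\neq w$ is also exactly the point the paper's parenthetical ``(distinct)'' is guarding against.
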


\begin{proof}
  Theorem~\ref{TheoremSimpleBicliqueFaces} implies that
  for every $\setdef{u,v},\setdef{v,w}$ of (distinct) adjacent edges of $P$,
  we have
  \begin{gather*}
    \setdef{u,v},\setdef{v,w} \notin \BicliqueFaces 
      ~\Rightarrow~ \setdef{v} \notin \BicliqueFaces \ .
  \end{gather*}
  Due to Lemma~\ref{TheoremBicliqueVerticesAndSubfaces},
  $\BicliqueVertices = \setdef{ v \text{ vertex of } P }[ v \notin \bigcup \BicliqueFaces ]$,
  where $\BicliqueFaces$ is the face set induced by the same facet.
  Hence, $v \notin \BicliqueVertices$ implies 
  $\setdef{u,v} \in \BicliqueFaces$ or $\setdef{v,w} \in \BicliqueFaces$,
  thus $u \notin \BicliqueVertices$ or 
  $w \notin \BicliqueVertices$ and we conclude that $\BicliqueVertices$
  is $\CommonNeighbors$-closed.

  Furthermore, $\BicliqueVertices$ is not equal to the whole vertex set of $P$
  since the given facet is proper.
\end{proof}

We just proved that every biclique $\BicliqueFaces \cupdot \BicliqueVertices$
induced by a (proper) facet from a simple extension
must satisfy certain properties.
The next example shows that these properties are not sufficient for
an extension polytope to be simple.

\begin{example}
  \label{ExampleNonSimpleExtensionWithSimpleBicliqueCovering}
  Define $m_1, \ldots, m_7 \in \R^3$ to be the columns of the matrix
  \begin{equation*}
    M := \begin{pmatrix}
       1 &  5 &  1 &  0 & -1 & -5 & -1 \\
       1 &  0 & -1 &  0 &  1 &  0 & -1 \\
       0 & -4 &  0 &  1 &  0 & -4 &  0
    \end{pmatrix} \ ,
  \end{equation*}
  and let $Q := \conv \setdef{m_1, \ldots, m_7} \subseteq \R^3$ be their convex hull.
  The vertex $m_4$ has $4$ neighbors, that is, $Q$ is not simple.
  Let $P$ be the projection of $Q$ onto the first two coordinates.
  Observe that $P$ is a $6$-gon and that the only relevant
  types of faces $F,F'$ are adjacent edges of $P$.
  It is quickly verified that all
  induced face and vertex sets
  satisfy Theorem~\ref{TheoremSimpleBicliqueFaces}
  and Corollary~\ref{TheoremSimpleBicliqueVertices}, respectively.
\end{example}

\begin{figure}[ht]
  \begin{center}
    \begin{tikzpicture}
      \piInputConfigured{TikZ-extended-formulations-simple-extensions.tex}%
        [job=example-polytope,\tikzOptions]
    \end{tikzpicture}
  \end{center}
  \caption{Polytope $Q$ from Example~\ref{ExampleNonSimpleExtensionWithSimpleBicliqueCovering}
    and its projection $P$.}
  \label{FigureNonSimpleExtensionPolytope}
\end{figure}
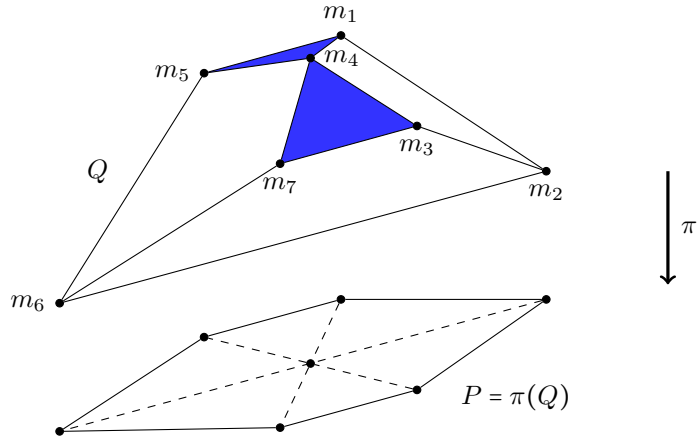

Note that this example only shows that we cannot decide from the biclique covering
whether the extension is simple.
It may still be true that for such biclique coverings there always
\emph{exists} a simple extension.

The polytope $Q$ from the example
can be used to show that Corollary~\ref{TheoremSimpleBicliqueVertices}
is indeed a specialization of Theorem~\ref{TheoremSimpleBicliqueFaces}~(a).
To see this, consider the set $\BicliqueFaces$ of faces
consisting of $\conv \setdef{m_1, m_4, m_5}$, $\conv \setdef{m_3, m_4, m_7}$
and all their subfaces.
Lemma~\ref{TheoremBicliqueVerticesAndSubfaces} implies
$\BicliqueVertices = \setdef{m_2, m_6}$ which is proper $\CommonNeighbors$-closed.
But $\BicliqueFaces$ does not satisfy Theorem~\ref{TheoremSimpleBicliqueFaces}~(a)
for the choice $F := \conv \setdef{m_1, m_2, m_3, m_4} \notin \BicliqueFaces$,
$F' := \conv \setdef{m_4,m_5,m_6,m_7} \notin \BicliqueFaces$
since $F \cap F' = \setdef{m_4} \in \BicliqueFaces$.

\bigskip
\pagebreak[3]

Nevertheless we can obtain useful lower bounds 
from Theorem~\ref{TheoremSimpleBicliqueFaces} and
Corollary~\ref{TheoremSimpleBicliqueVertices}.

\begin{corollary}
  \label{TheoremSimpleBicliqueVerticesCoveredByClosed}
  The node set of a polytope $P$ can be covered by $\sxc[P]$
  many proper $\CommonNeighbors$-closed sets.
\end{corollary}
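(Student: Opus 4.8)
The plan is to combine Proposition~\ref{TheoremBicliqueCovering} with Corollary~\ref{TheoremSimpleBicliqueVertices}. Fix a simple extension $P = \pi(Q)$ with $\xc$-many facets, where $Q$ is simple and has exactly $\sxc[P]$ facets (such an extension exists by definition of $\sxc$). We must exhibit a covering of the vertex set $V$ of $P$ by at most $\sxc[P]$ proper $\CommonNeighbors$-closed sets.

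First I would invoke Proposition~\ref{TheoremBicliqueCovering}: every edge $\setdef{F,v}$ of $\BicliqueGraph[P]$ is covered by a biclique induced by a \emph{proper} facet of $Q$. Applying this with $F = \emptyset$, which is a face of $P$ not containing any vertex, we get that for every vertex $v$ of $P$ the edge $\setdef{\emptyset, v}$ is covered by some proper facet $\hat f$, meaning $v \in \BicliqueVertices[\hat f]$. Hence the collection $\{\BicliqueVertices[\hat f] : \hat f \text{ proper facet of } Q\}$ covers $V$. Since $Q$ has at most $\sxc[P]$ facets in total, there are at most $\sxc[P]$ proper ones, so this is a covering by at most $\sxc[P]$ sets. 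Finally, each such $\BicliqueVertices[\hat f]$ is a proper $\CommonNeighbors$-closed set by Corollary~\ref{TheoremSimpleBicliqueVertices}, which applies precisely because $Q$ is simple and $\hat f$ is proper. This yields the claim.

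I do not anticipate a genuine obstacle here; the statement is essentially a repackaging of the machinery already developed. The only point requiring a little care is making sure that the whole vertex set is covered, i.e.\ that no vertex is left out. The clean way to guarantee this is the observation above that the edge $\setdef{\emptyset,v}$ of $\BicliqueGraph[P]$ forces $v$ into some induced vertex set of a proper facet; alternatively one could argue directly that for each vertex $v$ there is a facet $\hat f$ of $Q$ with $j(v) \not\subseteq \hat f$ (since $j(v)$ is a proper face of $Q$ and the face lattice is coatomic) and that such $\hat f$ is automatically proper because it contains $j(\emptyset) = j(v) \cap j(w)$ for a second vertex $w$ — but routing through Proposition~\ref{TheoremBicliqueCovering} with $F=\emptyset$ is the shortest path and already handles properness. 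A minor bookkeeping remark is that if $P$ is a single point the statement is vacuous (one proper $\CommonNeighbors$-closed set, the empty set, suffices, or there are no edges to worry about), so we may assume $P$ has at least two vertices when invoking the $F=\emptyset$ argument.

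I would present the proof in three short steps: (1) pick an optimal simple extension and note $Q$ has $\sxc[P]$ facets; (2) use Proposition~\ref{TheoremBicliqueCovering} on the edges $\setdef{\emptyset,v}$ to conclude that the induced vertex sets of the proper facets of $Q$ cover $V$, and there are at most $\sxc[P]$ of them; (3) apply Corollary~\ref{TheoremSimpleBicliqueVertices} to each to see it is proper $\CommonNeighbors$-closed.

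\begin{proof}
  Let $P = \pi(Q)$ be a simple extension of $P$ with $Q$ having exactly $\sxc[P]$ facets;
  such an extension exists by the definition of the simple extension complexity.
  If $P$ has at most one vertex the statement is trivial, so assume $P$ has at least two vertices.

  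The empty set is a face of $P$, and for every vertex $v$ of $P$ the pair $\setdef{\emptyset, v}$
  is an edge of $\BicliqueGraph[P]$ (as $v \notin \emptyset$).
  By Proposition~\ref{TheoremBicliqueCovering}, this edge is covered by the biclique induced by some
  proper facet $\hat{f}$ of $Q$, which in particular means $v \in \BicliqueVertices[\hat{f}]$.
  Hence the family
  \begin{gather*}
    \setdef{ \BicliqueVertices[\hat{f}] }[ \hat{f} \text{ proper facet of } Q ]
  \end{gather*}
  covers the whole vertex set of $P$.
  Since $Q$ has $\sxc[P]$ facets in total, this family consists of at most $\sxc[P]$ sets.

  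Finally, by Corollary~\ref{TheoremSimpleBicliqueVertices} each set $\BicliqueVertices[\hat{f}]$
  appearing above, being induced by a proper facet of the simple extension $Q$,
  is a proper $\CommonNeighbors$-closed set.
  This exhibits a covering of the node set of $P$ by at most $\sxc[P]$ proper
  $\CommonNeighbors$-closed sets.
\end{proof}
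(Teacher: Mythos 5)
Your proof is correct and is exactly the argument the paper intends: the corollary is stated without proof as an immediate consequence of Proposition~\ref{TheoremBicliqueCovering} (which, applied to the edges $\setdef{\emptyset,v}$, shows the vertex sets induced by proper facets cover all vertices) and Corollary~\ref{TheoremSimpleBicliqueVertices} (which shows each such set is proper $\CommonNeighbors$-closed). Your write-up just makes this explicit, including the harmless degenerate case.
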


\begin{lemma}
  \label{TheoremTrivialClosureImpliesDegreeBound}
  Let $P$ be a polytope and $G$ its graph.
  If all proper $\CommonNeighbors$-closed sets in $G$ are isolated
  then the simple extension complexity of $P$ is greater than the maximum
  size of the neighborhood of any node of $G$.
\end{lemma}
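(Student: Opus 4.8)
The plan is to combine Corollary~\ref{TheoremSimpleBicliqueVerticesCoveredByClosed} with the hypothesis on $\CommonNeighbors$-closed sets, so that the whole argument reduces to a short counting step. First I would invoke Corollary~\ref{TheoremSimpleBicliqueVerticesCoveredByClosed} to obtain a cover of the node set $V$ of $P$ by $\sxc[P]$ many proper $\CommonNeighbors$-closed subsets $W_1, \ldots, W_{\sxc[P]}$ of $V$. By assumption each of these sets is isolated, i.e., any two distinct nodes of $W_i$ have distance at least $3$ in $G$.

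Next, fix a node $v$ of $G$ whose neighborhood $N(v)$ has maximum cardinality, and consider $S := \{v\} \cup N(v)$, a set of exactly $1 + |N(v)|$ nodes. The key observation is that each isolated set $W_i$ contains at most one element of $S$: any two distinct nodes of $S$ are either $v$ and a neighbor of $v$ (distance $1$) or two neighbors of $v$ (distance at most $2$), so two of them lying in the same $W_i$ would contradict the distance-$\geq 3$ property. Since $S \subseteq W_1 \cup \cdots \cup W_{\sxc[P]}$, we get $1 + |N(v)| = |S| \leq \sum_i |W_i \cap S| \leq \sxc[P]$, and therefore $\sxc[P] \geq 1 + |N(v)| > |N(v)|$, which is exactly the claimed bound since $v$ was chosen to maximize $|N(v)|$.

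I do not expect a real obstacle here: the only thing to get right is the translation of ``isolated'' into the statement that a closed neighborhood $\{v\} \cup N(v)$ meets each covering set in at most one node, after which the conclusion is a one-line pigeonhole count. (The degenerate case $N(v) = \emptyset$ is harmless, the bound then just reading $\sxc[P] \geq 1$.)
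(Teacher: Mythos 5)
Your argument is correct and is essentially the paper's own proof: both cover the vertex set by $\sxc[P]$ proper $\CommonNeighbors$-closed (hence isolated) sets via Corollary~\ref{TheoremSimpleBicliqueVerticesCoveredByClosed} and observe that each such set meets the closed neighborhood $\setdef{v} \cup N(v)$ in at most one node, giving $\sxc[P] \geq 1 + |N(v)|$. Your write-up just makes the pigeonhole count explicit.
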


\begin{proof}
  Let $w$ be a node maximizing the size of the neighborhood
  and let $W$ be the neighborhood of $w$.
  Since no isolated set can contain more than one node from $W \cup \setdef{w}$,
  Corollary~\ref{TheoremSimpleBicliqueVerticesCoveredByClosed} implies the claim.
\end{proof}

Using knowledge about random 0/1-polytopes, we can easily establish the following result.

\begin{theorem}
  \label{TheoremRandomPolytopesSimpleExtensionComplexity}
  There is a constant $\sigma > 0$ such that a random $d$-dimensional 0/1-polytope $P$
  with at most $2^{\sigma d}$ vertices
  asymptotically almost surely has a simple extension complexity 
  equal to its number of vertices.
\end{theorem}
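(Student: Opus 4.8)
The plan is to combine Lemma~\ref{TheoremTrivialClosureImpliesDegreeBound} with two facts about random $0/1$-polytopes: first, that such a polytope $P$ is \emph{simplicial} (more precisely, that the neighborhood of every vertex induces few of the combinatorial constraints we worry about), and second, that its graph has large minimum degree. If the only proper $\CommonNeighbors$-closed sets are isolated, then by Lemma~\ref{TheoremTrivialClosureImpliesDegreeBound} the simple extension complexity exceeds the maximum neighborhood size; but combined with Corollary~\ref{TheoremSimpleBicliqueVerticesCoveredByClosed} and the fact that each isolated set covers at most one node of any closed neighborhood $W\cup\{w\}$, we actually get that covering the whole vertex set by proper $\CommonNeighbors$-closed sets requires one set per vertex, hence $\sxc[P]$ equals the number of vertices. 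So the real goal is: show that a.a.s.\ every proper $\CommonNeighbors$-closed set in the graph of $P$ is isolated, i.e.\ that any two vertices at distance $2$ in the graph of $P$ have a \emph{unique} common neighbor among the vertices of $P$ would fail to force closure --- more carefully, I want that whenever $u,v,w$ are vertices of $P$ with $\{u,v\},\{v,w\}$ edges of $P$ and $u\ne w$, there is no further structure; the cleanest sufficient condition is that no vertex has two neighbors that are themselves at distance $2$ via a path leaving $W$, which for random $0/1$-polytopes one expects because the graph is essentially complete.

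Concretely, I would invoke the known result (Kaibel--Remshagen, or the analysis underlying it) that for a random $0/1$-polytope $P$ with $N\le 2^{\sigma d}$ vertices chosen uniformly from $\{0,1\}^d$, a.a.s.\ \emph{every pair of vertices is an edge of $P$}, i.e.\ the graph of $P$ is the complete graph $K_N$. (This holds for a suitable constant $\sigma>0$: the probability that a fixed pair fails to be an edge decays fast enough to survive a union bound over the $\binom N2$ pairs when $N$ is at most exponentially small in $d$.) Granting this, the graph $G$ of $P$ is $K_N$; then a set $W\subsetneq V$ with $|W|\ge 2$ can never be $\CommonNeighbors$-closed, because picking $v\notin W$ and two distinct $u,w\in W$ gives edges $\{u,v\},\{v,w\}\in\delta(W)$ with $u\ne w$, so $v\in\CommonNeighbors[W]\setminus W$. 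Hence the only proper $\CommonNeighbors$-closed sets are the singletons, which are trivially isolated, and in particular every proper $\CommonNeighbors$-closed set is isolated.

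Now apply the covering statement. By Corollary~\ref{TheoremSimpleBicliqueVerticesCoveredByClosed}, the vertex set of $P$ is covered by $\sxc[P]$ proper $\CommonNeighbors$-closed sets; since each such set is a singleton, we need at least $N$ of them, so $\sxc[P]\ge N$. Together with the trivial upper bound $\sxc[P]\le N$ coming from the trivial (simplex) extension, we get $\sxc[P]=N$, which is the claim. I would phrase the whole argument as: fix $\sigma>0$ small enough that the "graph is $K_N$'' event has probability tending to $1$; on that event the conclusion holds deterministically.

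The main obstacle is purely the probabilistic input: one must pin down the constant $\sigma$ and the precise a.a.s.\ statement "the graph of a random $0/1$-polytope with $\le 2^{\sigma d}$ vertices is complete.'' This is where I would lean on the literature on random $0/1$-polytopes (the neighborliness / adjacency results); the combinatorial half --- that completeness of the graph kills all nontrivial $\CommonNeighbors$-closed sets and hence forces $\sxc[P]=N$ via Corollaries~\ref{TheoremSimpleBicliqueVerticesCoveredByClosed} and the trivial bound --- is short and is exactly the specialization of Theorem~\ref{TheoremSimpleBicliqueFaces} already set up in this section. If one wanted a self-contained estimate, the key computation is bounding, for fixed $x,y\in\{0,1\}^d$, the probability that the segment $[x,y]$ is \emph{not} an edge, i.e.\ that its midpoint is a convex combination of the other sampled points; a standard measure-concentration/covering argument shows this probability is at most $2^{-cd}$ for some $c>0$, and then $\binom N2 2^{-cd}\to 0$ provided $\sigma<c/2$, say.
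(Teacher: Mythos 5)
Your proposal is correct and follows essentially the same route as the paper: invoke the known a.a.s.\ result that a random 0/1-polytope with at most $2^{\sigma d}$ vertices has a complete graph (the paper cites Theorem~3.37 of Gillmann's thesis for this), observe that completeness forces every proper $\CommonNeighbors$-closed set to be a singleton, and conclude via Corollary~\ref{TheoremSimpleBicliqueVerticesCoveredByClosed} together with the trivial simplex upper bound. The opening detour through Lemma~\ref{TheoremTrivialClosureImpliesDegreeBound} and simpliciality is unnecessary, but the argument you settle on is exactly the paper's.
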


\begin{proof}
  One of the main results of Gillmann’s thesis (See Theorem~3.37 in \cite{Gillmann07} for $k=2$) is
  that there is such a $\sigma$ ensuring that a random $d$-dimensional 0/1-polytope $P$
  with at most $2^{\sigma d}$ vertices 
  asymptotically almost surely has every pair of vertices adjacent.
  Since in this situation the only proper $\CommonNeighbors$-closed sets are
  the singletons, Corollary~\ref{TheoremSimpleBicliqueVerticesCoveredByClosed} yields the claim.
\end{proof}

\pagebreak[3]
\section{\texorpdfstring{$k$}{k}-Hypersimplex}
\label{SectionHypersimplex}

Let $\Delta(k)$ denote the
$k$-hypersimplex in $\R^n$, i.e.,
the 0/1-cube intersected with
the hyperplane $\scalprod{\onevec[n]}{x} = k$.
Note that its vertices are all 0/1-vectors with exactly $k$ $1$'s,
since the above linear system is totally unimodular
(a row of ones together with two unit matrices).
It follows from the knowledge about edges and 2-faces of the cube that two vertices
of $\Delta(k)$ are adjacent if and only if they differ in exactly two coordinates.
In other words, all neighbors of a vertex $x$ can be obtained by
replacing a $1$ by a $0$ at some index and a $0$ by a $1$ at some other index.
Observe that $\Delta(k)$ is almost simple for $2 \leq k \leq n-2$ in
the sense that its dimension is $n-1$,
but every vertex lies in exactly $n$ facets.
With this in mind, the following result
may seem somewhat surprising.

\begin{theorem}
  \label{TheoremHypersimplex}
  Let $1 \leq k \leq n-1$.
  The simple extension complexity of
  $\Delta(k) \subseteq \R^n$ is equal
  to its number of vertices
  $\bincoeff{n}{k}$.
\end{theorem}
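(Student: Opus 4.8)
The plan is to apply Corollary~\ref{TheoremSimpleBicliqueVerticesCoveredByClosed}: it suffices to show that every proper $\CommonNeighbors$-closed set $W$ in the graph of $\Delta(k)$ is a singleton, for then the node set cannot be covered by fewer than $\bincoeff{n}{k}$ such sets, and the trivial extension shows this many suffice. Since the cases $k=1$ and $k=n-1$ give simplices (for which the statement is immediate), assume $2 \le k \le n-2$, so the graph is the Johnson graph $J(n,k)$, vertex-transitive and connected with diameter $\ge 2$.

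First I would recall the adjacency structure made explicit in the excerpt: two $0/1$-vectors with $k$ ones are adjacent iff they differ in exactly two coordinates, i.e.\ one is obtained from the other by moving a single $1$. So for a vertex $x$, writing $S_x$ for its support, the neighbors are the vectors with support $(S_x \setminus \{i\}) \cup \{j\}$ for $i \in S_x$, $j \notin S_x$. The key combinatorial fact I would prove is: if $x,y$ are vertices at distance exactly $2$ in $J(n,k)$, then they have at least two common neighbors (in fact more). Distance $2$ means $|S_x \triangle S_y| = 4$, say $S_x \setminus S_y = \{i_1,i_2\}$ and $S_y \setminus S_x = \{j_1,j_2\}$; then the two vertices with supports $(S_x \setminus \{i_1\}) \cup \{j_1\}$ and $(S_x \setminus \{i_1\}) \cup \{j_2\}$ are both adjacent to $x$ and to $y$, and they are distinct. (Here $2 \le k \le n-2$ guarantees the relevant index sets are nonempty, so such moves exist.)

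Now suppose $W$ is $\CommonNeighbors$-closed and contains two distinct vertices $x \ne y$; take them at minimum distance $r \ge 1$ within the graph restricted to $W$'s "witnesses" — more simply, among all pairs of distinct vertices of $W$ pick one $(x,y)$ of minimum graph-distance $r$. If $r = 1$, then $x,y$ are adjacent; pick any common neighbor $z$ of $x$ and $y$ (which exists by the $2 \le k \le n-2$ assumption, since $J(n,k)$ has no isolated edges), and $z \in \CommonNeighbors[W] = W$ contradicts minimality unless $z \in \{x,y\}$, which is impossible. If $r \ge 2$, take $x,y \in W$ with $d(x,y) = r$ and let $z$ be a common neighbor of $x$ and some vertex $y'$ on a shortest $x$–$y$ path with $d(x,y')=2$; by the distance-$2$ claim there are at least two distinct common neighbors $z_1, z_2$ of $x$ and $y'$, both forced into $W$ via the edges $\{u,v\}=\{x,z_a\}$ and... — here I need the witnessing edges to actually lie in $\delta(W)$. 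The cleanest route: show directly that $W$ being $\CommonNeighbors$-closed and connected-in-a-suitable-sense forces $W = V$ or $|W|=1$. Concretely, if $|W| \ge 2$, pick $x \in W$ with a neighbor $p \notin W$ (exists unless $W=V$, since $J(n,k)$ is connected); I claim there are two neighbors $p,p'$ of $x$ outside $W$ with a common neighbor $v$, forcing $v \in \CommonNeighbors[W]=W$, and iterating/choosing carefully shows $W$ absorbs a neighbor of $x$, then all of them, then (by vertex-transitivity and connectivity) all of $V$ — contradicting properness.

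**The main obstacle** is exactly the bookkeeping in the previous paragraph: turning "$x$ has a neighbor outside $W$" into "$x$ has \emph{two} edges in $\delta(W)$ sharing no far endpoint, whose other ends have a common neighbor $v$", so that $v$ is pulled into $W$; then one must check this forces $x$ itself, or a whole neighborhood, into $W$ and propagate. The needed local fact is that in $J(n,k)$ with $2 \le k \le n-2$, for any vertex $x$ and any proper nonempty subset $A$ of its neighborhood, either two vertices of $A$ or two vertices of $N(x) \setminus A$ have a common neighbor distinct from $x$ — this is a finite local computation about the link of a vertex in the Johnson graph (which is $K_k \times K_{n-k}$, a rook's graph), and it is where the hypothesis $2 \le k \le n-2$ is essential (for $k \in \{1, n-1\}$ the link degenerates). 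I would isolate this as a small lemma about rook's graphs, prove it by the explicit support manipulations above, and then the global argument — using connectivity and vertex-transitivity of $J(n,k)$ to spread the conclusion from one vertex to all — is routine, yielding $W = V$, which contradicts $W$ being \emph{proper}. Hence every proper $\CommonNeighbors$-closed set is a singleton, and Corollary~\ref{TheoremSimpleBicliqueVerticesCoveredByClosed} gives $\sxc[\Delta(k)] = \bincoeff{n}{k}$.
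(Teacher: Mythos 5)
There is a genuine gap, and it sits in the very first reduction: the claim you want to establish --- that \emph{every} proper $\CommonNeighbors$-closed set in the graph of $\Delta(k)$ is a singleton --- is false in general. A set $W$ is $\CommonNeighbors$-closed as soon as no vertex outside $W$ has two neighbors inside $W$; in particular every \emph{isolated} set (pairwise distances at least $3$) is proper $\CommonNeighbors$-closed. The graph of $\Delta(k)$ is the Johnson graph $J(n,k)$, whose diameter is $\min(k,n-k)$, so for $3 \le k \le n-3$ there are pairs of vertices at distance $3$ (e.g.\ the supports $\{1,2,3\}$ and $\{4,5,6\}$ in $J(6,3)$), and any such pair is a proper $\CommonNeighbors$-closed set of size $2$. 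Hence Corollary~\ref{TheoremSimpleBicliqueVerticesCoveredByClosed} alone cannot yield the bound $\bincoeff{n}{k}$ for these $k$. Your write-up also misapplies the operator in the propagation step: two neighbors $p,p'$ of $x$ lying \emph{outside} $W$ and having a common neighbor $v$ does not force $v\in\CommonNeighbors[W]$; for that, $v$ needs two neighbors \emph{inside} $W$. Likewise, in your $r=1$ case the conclusion $z\in W$ contradicts nothing, since $d(x,z)=1$ already equals the minimum distance.

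The paper closes exactly this gap by using more than $\CommonNeighbors$-closedness. By Theorem~\ref{TheoremSimpleBicliqueFaces} and Lemma~\ref{TheoremBicliqueVerticesAndSubfaces}, the vertex set $\BicliqueVertices$ induced by a proper facet of a \emph{simple} extension consists of the vertices avoiding a union of facets of $\Delta(k)$ itself; since the facets of $\Delta(k)$ are $x_i\ge 0$ and $x_i\le 1$, this forces $\BicliqueVertices=\setdef{v \text{ vertex of } \Delta(k)}[v_L=\onevec,\ v_U=\zerovec]$ for some disjoint $L,U\subseteq[n]$. Such a set is the vertex set of a smaller hypersimplex, hence connected: if it has at least two elements it contains two \emph{adjacent} vertices $u,w$ differing in two coordinates of $R=[n]\setminus(L\cup U)$, and one exhibits a common neighbor $v$ of $u$ and $w$ obtained by additionally flipping a coordinate $s\in L\cup U$, so that $v\notin\BicliqueVertices$ --- contradicting closedness. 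If you want to salvage your route, you must first derive this structural restriction on $\BicliqueVertices$ from the simplicity of the extension; the purely graph-theoretic statement you aimed for is not available.
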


\begin{proof}
  The case of $k=1$ or $k=n-1$ is clear since then
  $\Delta(k)$ is an $(n-1)$-dimensional simplex.

  Let $2 \leq k \leq n-2$ and
  $\BicliqueFaces$ and $\BicliqueVertices$
  be face and vertex sets induced by a proper facet of
  a simple extension of $\Delta(k)$.

  Since every vertex $v$ of $\Delta(k)$ has
  $v_i = 0$ or $v_i = 1$, at most one of the
  facets $x_i \geq 0$ or $x_i \leq 1$ can be
  in $\BicliqueFaces$ for every $i \in [n]$ (otherwise $\BicliqueVertices$ would be empty).
  We can partition $[n]$ into $L \cupdot U \cupdot R$
  such that $L$ (resp. $U$) contains those
  indices $i \in [n]$ such that the facet
  corresponding to $x_i \geq 0$ (resp. $x_i \leq 1$) is in $\BicliqueFaces$
  and $R$ contains the remaining indices.
  Lemma~\ref{TheoremBicliqueVerticesAndSubfaces}
  yields
  \begin{equation}
    \BicliqueVertices = \setdef{ v \text{ vertex of } \Delta(k) }[
      v_L = \onevec,~ v_U = \zerovec ] \ .
  \end{equation}
  We now prove that a node set $\BicliqueVertices$ of this form
  is proper $\CommonNeighbors$-closed only if $|\BicliqueVertices| = 1$.
  Then, Corollary~\ref{TheoremSimpleBicliqueVerticesCoveredByClosed} yields the claim.

\begin{figure}[ht]
  \begin{center}
    \begin{tikzpicture}
      \piInputConfigured{TikZ-extended-formulations-simple-hypersimplex.tex}%
        [job=hypersimplex,\tikzOptions]
    \end{tikzpicture}
  \end{center}
  \caption{Vertices of $\Delta(k)$ in $\BicliqueVertices$ for a Biclique.}
  \label{FigureHypersimplex}
\end{figure}

  Indeed, if we have $|\BicliqueVertices| > 1$, then there exist
  vertices $u,w \in \BicliqueVertices$ and indices $\khsi,\khsj \in R$ such that
  $u_{\khsi} = w_{\khsj} = 1$, $u_{\khsj} = w_{\khsi} = 0$, and $u_l = w_l$
  for all $l \notin \{\khsi,\khsj\}$ (see Figure~\ref{FigureHypersimplex}).
  Choose any $s \in L \cupdot U$ and observe that, since $u,w \in \BicliqueVertices$,
  $u_s = w_s = 1$ if $s \in L$ and $u_s = w_s = 0$ if $s \in U$.
  The following vertex is easily checked to be adjacent 
  to $u$ and $w$ ($\min$ and $\max$ must be read component-wise):
  \begin{equation*}
    \khsv := \left\{\begin{array}{ll}
      \max(u,w) - \unitvec{s} & \text{ if } s \in L \\
      \min(u,w) + \unitvec{s} & \text{ if } s \in U
    \end{array}\right.
  \end{equation*}
  As $\khsv_s = 0$ if $s \in L$ and $\khsv_s = 1$ if $s \in U$,
  $\khsv \notin \BicliqueVertices$.
  This contradicts the fact that $\BicliqueVertices$ is $\CommonNeighbors$-closed.
\end{proof}

\pagebreak[3]
\section{Spanning Tree Polytope}
\label{SectionSpanningTreePolytope}

In this section we bound the simple extension complexity
of the spanning tree polytope
$\SpanTreePoly{K_n}$ of the complete graph $K_n$ with $n$ nodes.
In order to highlight different perspectives
we mention three equivalent adjacency characterizations
which all follow from the fact that the spanning tree polytope is the
base polytope of a graphic matroid (see \cite{Schrijver03}, Theorem~40.6.).
The vertices corresponding to spanning trees $T$ and $T'$ are adjacent in the spanning tree polytope if and only if \dots
\begin{itemize}
\item
  \dots $|T \Delta T'| = 2$ holds.
\item
  \dots $T'$ arises from $T$ by removing one edge and reconnecting the two connected components by another edge.
\item
  \dots $T'$ arises from $T$ by adding one additional adge and removing any edge from the cycle that this edge created.
\end{itemize}
From the third statement it is easy to see that the maximum degree of the $1$-skeleton of $\SpanTreePoly{K_n}$ is
in $\orderO{n^3}$, since there are $\orderO{n^2}$ possible choices for the additional edge, each of which
yields $\orderO{n}$ choices for a cycle-edge to remove.

\begin{lemma}
  \label{TheoremSpanningTreeCommonNeighborClosure}
  All proper $\CommonNeighbors$-closed sets in the graph of $\SpanTreePoly{K_n}$ are isolated.
\end{lemma}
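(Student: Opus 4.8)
The plan is to show that no proper $\CommonNeighbors$-closed set in the graph of $\SpanTreePoly{K_n}$ can contain two spanning trees at distance $2$ (if it contains two trees, their path in the graph would force many of the intermediate trees to be dragged in, eventually exhausting the whole vertex set), and that a set of diameter $\geq 3$ that is not a singleton is impossible for the same reason — indeed, once $W$ is shown to be $\CommonNeighbors$-closed and proper, I want to conclude $W$ must be isolated, which by definition of isolated means every two trees in $W$ are at distance $\geq 3$. So the real content is: a proper $\CommonNeighbors$-closed set $W$ cannot contain two spanning trees $T, T'$ with $\operatorname{dist}(T,T') \leq 2$. Equivalently, I will show that if $T \in W$ and $T''$ is at distance $2$ from $T$ (via an intermediate $T'$ adjacent to both), then $T'' \in W$ is forced whenever $T' \in W$; and separately that if $T, T'$ are adjacent and both in $W$, this already propagates to force $W = V$.

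First I would handle the adjacent case. Suppose $T, T' \in W$ with $|T \Delta T'| = 2$, say $T' = T - e + f$ where $e$ lies on the unique cycle of $T + f$. The idea is to exhibit two \emph{different} common neighbors $T_1, T_2$ of $T$ and $T'$ (so that $\{T,T_1\},\{T_1,T'\}$ and $\{T,T_2\},\{T_2,T'\}$ are edges), each lying in $\delta(W)$ unless already in $W$; invoking $\CommonNeighbors[W]=W$ repeatedly then shows $W$ contains large neighborhoods and eventually everything. Concretely, for any third edge $g \notin T\cup T'$ whose fundamental cycle in $T+g$ contains both an edge $\neq e$ that can be swapped and keeps $f$-structure intact, one gets a common neighbor; the abundance of such $g$ (there are $\orderO{n^2}$ of them and the degree argument from the third adjacency characterization shows the neighborhoods are genuinely large and overlapping) lets the closure operator sweep through all spanning trees. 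The cleanest route is probably: show the subgraph of $\SpanTreePoly{K_n}$ restricted to $W$ would have to be \emph{closed under} the local moves, and since the graph of the spanning tree polytope is connected with the property that any edge lies in many triangles/4-cycles, a $\CommonNeighbors$-closed set containing an edge is all of $V$.

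For the distance-$2$ case with the intermediate vertex possibly \emph{not} in $W$: here $T, T'' \in W$, $T' \notin W$, $T'$ adjacent to both. Then $\{u,v\} := \{T,T'\}$ and $\{v,w\} := \{T',T''\}$ are two adjacent edges of $\SpanTreePoly{K_n}$ with $u = T, w = T'' \in W$, so $v = T' \in \delta(W)$-incident endpoints; but $\CommonNeighbors$ pulls $v$ into $W$ precisely when $v$ has two \emph{distinct} neighbors in $W$ reached by cut edges, which is exactly the configuration $u \neq w$, $\{u,v\},\{v,w\}\in\delta(W)$. Hence $T' \in \CommonNeighbors[W] = W$, contradiction — so $T$ and $T''$ cannot both be in $W$, i.e., $W$ has minimum distance $\geq 3$ unless $|W|=1$. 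Combining with the previous paragraph (which kills the case that $W$ contains two adjacent trees), we get that every proper $\CommonNeighbors$-closed $W$ is isolated.

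The main obstacle I expect is the adjacent case: one must verify that for a given pair of adjacent spanning trees $T, T'$ there really are \emph{two distinct} common neighbors whose fundamental-cycle data differs, so that the closure operator genuinely fires, and then argue that iterating this sweeps out all of $V$ rather than stalling at some intermediate closed set. This is where the graphic-matroid structure (base-exchange on graphic matroids, and the fact that $K_n$ for $n$ reasonably large has plenty of edges to swap) does the work; for very small $n$ one checks by hand that $\SpanTreePoly{K_n}$ is a simplex or low-dimensional so the statement is vacuous or trivial. Once the combinatorial exchange lemma is set up, the propagation argument is routine bookkeeping.
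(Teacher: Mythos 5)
Your reduction is the right one, and the easy half is handled correctly: if $T,T''\in W$ are at distance two via an intermediate $T'\notin W$, then $T'$ has two distinct neighbours in $W$, so $\CommonNeighbors$ absorbs it and contradicts closedness; hence a proper $\CommonNeighbors$-closed set can only fail to be isolated by containing two \emph{adjacent} trees. This matches the paper's (implicit) first step. The genuine gap is that the entire mathematical content of the lemma lies in the step you defer to ``routine bookkeeping'': showing that a proper $\CommonNeighbors$-closed set cannot contain an adjacent pair. Recall that $\CommonNeighbors$ only absorbs a vertex once it already has \emph{two} distinct neighbours in $W$, so connectivity plus ``many triangles'' is not an argument---the operator can in principle stall on structured sets, and ruling this out is exactly what must be proved. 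Moreover, your edge-$g$ construction only manufactures common neighbours of the adjacent pair $T,T'$, all of which are indeed absorbed; but the tree that actually has to be absorbed to reach a contradiction is a neighbour $T_3$ of $T$ that is \emph{not} adjacent to $T'$ (such a $T_3$ exists because $W$ is proper and the graph is connected), and you never exhibit a second neighbour of $T_3$ inside $W$.

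The paper closes precisely this gap with a local case analysis. Writing $\setdef{\sptTe}=T_1\setminus T_2$ and $\setdef{\sptTg}=T_1\setminus T_3$, non-adjacency of $T_2$ and $T_3$ forces $\sptTe\neq\sptTg$, so $F:=T_1\setminus\setdef{\sptTe,\sptTg}$ is a forest with three components $X',X'',Y$, and one distinguishes where the edge $\sptTh$ of $T_3\setminus T_1$ lands. If $\sptTh$ meets $Y$, the single tree $F\cup\setdef{\sptTg,\sptTh}$ is a common neighbour of $T_1,T_2,T_3$, and two applications of closedness absorb $T_3$. If $\sptTh$ joins $X'$ to $X''$, that tree is not spanning, and one needs an auxiliary edge $\sptTj$ between $X'$ and $Y$ (this is where completeness of $K_n$ enters) together with a chain of three intermediate trees $F\cup\setdef{\sptTg,\sptTj}$, $F\cup\setdef{\sptTe,\sptTj}$, $F\cup\setdef{\sptTh,\sptTj}$, each adjacent to two trees already forced into $W$, before $T_3$ itself acquires two neighbours in $W$. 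Without this (or an equivalent) explicit construction, the proposal does not establish the lemma.
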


\begin{proof}
  Throughout the proof, we will identify vertices with the corresponding spanning
  trees.

  Suppose $\BicliqueVertices$ is a proper $\CommonNeighbors$-closed set that is not isolated.
  Then there are spanning trees $T_1, T_2 \in \BicliqueVertices$ and 
  $T_3 \notin \BicliqueVertices$,
  such that $T_1$ is adjacent to both $T_2$ and $T_3$, but $T_2$ and $T_3$ are not adjacent.

  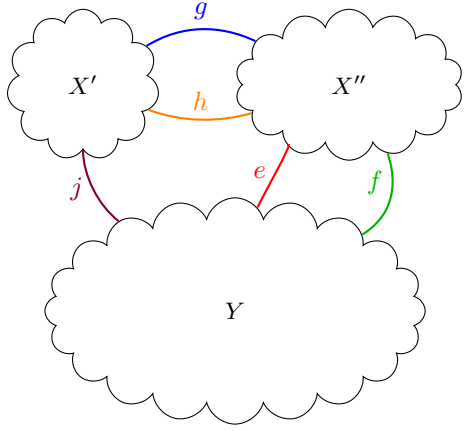
\begin{figure}[ht]
    \begin{center}
      \begin{tikzpicture}
        \piInputConfigured{TikZ-extended-formulations-simple-spanning-tree.tex}%
          [job=common-neighbors,\tikzOptions]
      \end{tikzpicture}
    \end{center}
    \caption{Case 2 of Lemma~\ref{TheoremSpanningTreeCommonNeighborClosure}.}
    \label{FigureSTcase2}
  \end{figure}

  Let $\sptTe$ be the unique edge which is in $T_1$ but not in $T_2$, i.e., $\setdef{\sptTe} = T_1 \setminus T_2$.
  Analogously, let $\setdef{\sptTf} = T_2 \setminus T_1$, $\setdef{\sptTg} = T_1 \setminus T_3$, and $\setdef{\sptTh} = T_3 \setminus T_1$.
  Since $T_2$ and $T_3$ are not adjacent in the polytope, their symmetric difference $T_2 \Delta T_3 \subseteq \setdef{\sptTe,\sptTf,\sptTg,\sptTh}$ must have cardinality greater than $2$.
  Because the symmetric difference of two spanning trees consists of an even number of edges, that cardinality must be equal to $4$, proving $\sptTe \neq \sptTg$.
  Let us define $F$ by $F = T_1 \setminus \setdef{\sptTe,\sptTg}$ which is a tree with two edges missing, i.e., a forest with three connected components $X', X'', Y$.
  W.l.o.g., $g$ connects $X'$ with $X''$ and $e$ connects $X''$ with $Y$.
  In their turn, $T_2$ and $T_3$ can be written as $F \cup \setdef{\sptTf,\sptTg}$ and $F \cup \setdef{\sptTe,\sptTh}$, respectively.

  There are two possible cases for $\sptTh$:

  \medskip

  \noindent
  \textbf{Case 1:} $\sptTh$ connects $Y$ with $X'$ or $X''$.

  Let $T' := F \cup \setdef{\sptTg,\sptTh}$ and observe that $T'$ is a spanning tree
  since $\sptTg$ connects $X'$ with $X''$ and $h$ connects one of both with $Y$.
  Obviously, $T'$ is adjacent to $T_1$, $T_2$, and $T_3$.
  Since $T'$ is adjacent to $T_1$ and $T_2$, 
  $T' \in \CommonNeighbors[\BicliqueVertices] = \BicliqueVertices$.
  Since $T_3$ is adjacent to $T_1, T' \in \BicliqueVertices$,
  this in turn implies the contradiction $T_3 \in \BicliqueVertices$.
 
  \medskip

  \noindent
  \textbf{Case 2:} $\sptTh$ connects $X'$ with $X''$.

  Let $\sptTj$ be any edge connecting $X'$ with $Y$ 
  (recall that we dealing with a complete graph)
  and let $T' := F \cup \setdef{\sptTg,\sptTj}$ which is a spanning tree adjacent
  to $T_1$ and $T_2$ and hence $T' \in \CommonNeighbors[\BicliqueVertices] = \BicliqueVertices$.
  Clearly, $T'' := F \cup \setdef{\sptTe,\sptTj}$
  is a spanning tree adjacent to $T_1$ and $T'$ and hence $T'' \in \BicliqueVertices$.
  Finally, let $T''' := F \cup \setdef{\sptTh,\sptTj}$ be a third spanning tree
  adjacent to $T'$ and $T''$. 
  Again, we have $T''' \in \BicliqueVertices$ due to 
  $\CommonNeighbors[\BicliqueVertices] = \BicliqueVertices$.

  Since $T_3$ is adjacent to $T_1$ and $T'''$,
  exploiting $\CommonNeighbors[\BicliqueVertices] = \BicliqueVertices$
  once more yields the contradiction $T_3 \in \BicliqueVertices$.
\end{proof}

Using this result we immediately get a lower bound of $\orderOmega{n^3}$
for the simple extension complexity of $\SpanTreePoly{K_n}$
since the maximum degree of its graph is of that order.
However, we can prove a much stronger result.

\begin{theorem}
  \label{TheoremSpanningTreePolytopeSimpleExtensionComplexity}
  The simple extension complexity of the spanning tree polytope of $K_n$
  is in $\orderOmega{2^{n-\ordero{n}}}$.
\end{theorem}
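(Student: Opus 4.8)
The plan is to run the biclique-covering machinery of Section~\ref{SectionTechniques} on the \emph{edges} (one-dimensional faces) of $\SpanTreePoly{K_n}$, not merely on its vertices: Lemma~\ref{TheoremSpanningTreeCommonNeighborClosure} together with Corollary~\ref{TheoremSimpleBicliqueVerticesCoveredByClosed} and Lemma~\ref{TheoremTrivialClosureImpliesDegreeBound} only sees the vertex--$\emptyset$ non-incidences and therefore cannot give more than the degree bound $\orderOmega{n^3}$, whereas Proposition~\ref{TheoremBicliqueCovering} forces \emph{every} face--vertex non-incidence of $\BicliqueGraph[\SpanTreePoly{K_n}]$ to be covered by a biclique coming from a (proper) facet of the simple extension. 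So let $Q$ be a simple extension of $\SpanTreePoly{K_n}$ with $m$ facets, and recall that by Theorem~\ref{TheoremSimpleBicliqueFaces} each induced face set $\BicliqueFaces$ is the down-closure of a set $\mathcal{G}$ of facets of $\SpanTreePoly{K_n}$ and $\BicliqueVertices$ is precisely the set of spanning trees lying on no facet in $\mathcal{G}$, which moreover is an isolated set of trees by Lemma~\ref{TheoremSpanningTreeCommonNeighborClosure}.

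Next I would bring in the facet structure of $\SpanTreePoly{K_n}$: its facets are the nonnegativity facets $\{x_e = 0\}$ and the subtour-type facets $F_S = \{x(E(S)) = |S|-1\}$ for $S \subseteq [n]$ with $2 \le |S| \le n-1$; a spanning tree $T$ lies on $\{x_e = 0\}$ iff $e \notin T$ and on $F_S$ iff $T[S]$ is connected. Fix a large, highly structured family $\mathcal{S}$ of spanning trees together with a large collection $\mathcal{E}$ of edges of the polytope spanned by pairs from $\mathcal{S}$ --- the natural candidate being the $2^{n-2}$ ``double stars'' with centre edge $\{1,2\}$, which identify $\mathcal{S}$ with the cube $\{0,1\}^{n-2}$ under which polytope distance becomes Hamming distance, with $\mathcal{E}$ the cube edges. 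For a fixed edge $\{T_1,T_2\} \in \mathcal{E}$ one determines exactly which facets of $\SpanTreePoly{K_n}$ contain both $T_1$ and $T_2$ and, among those, which additionally miss a prescribed third tree $T_3 \in \mathcal{S}$; the point is that a single biclique of a simple extension can cover a non-incidence $\{\{T_1,T_2\},T_3\}$ only through such a facet, and only if all of $\BicliqueVertices$ --- an isolated, hence very sparse, set of trees --- avoids it.

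The combinatorial heart is then to turn this coupling into a counting bound: one sets up a fractional cover (or a direct double count) over the non-incidences $\{\{T_1,T_2\},T_3\}$ with $\{T_1,T_2\}\in\mathcal{E}$ and $T_3\in\mathcal{S}$, shows that a biclique coming from a facet of a simple extension can be ``responsible'' for at most a $2^{-n+\ordero{n}}$ fraction of them --- here the isolatedness of $\BicliqueVertices$ rules out the large subcube traces, in the spirit of the proof of Theorem~\ref{TheoremHypersimplex}, while Theorem~\ref{TheoremSimpleBicliqueFaces}(a) applied to two-dimensional faces of $\SpanTreePoly{K_n}$ is used to rule out the remaining configurations --- and then concludes $m \ge 2^{n-\ordero{n}}$. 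The small slack $\ordero{n}$ in the exponent should come from the usual coding-theoretic/entropy loss incurred when forcing a minimum-distance-$\ge 3$ trace on the cube.

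I expect the main obstacle to be precisely this last step: showing that the two constraints a biclique of a simple extension must satisfy --- that $\BicliqueVertices$ is isolated and that $\BicliqueFaces$ is the down-closure of a facet family of $\SpanTreePoly{K_n}$ through which the relevant edges of $\mathcal{E}$ pass --- are jointly so restrictive that exponentially many bicliques are needed. The subtlety is that neither constraint alone suffices (isolatedness of the vertex sets gives only $\orderOmega{n^3}$ via Corollary~\ref{TheoremSimpleBicliqueVertices}, and the facet-family structure alone permits enormous traces on $\mathcal{S}$), so the delicate part is extracting the exponential bound from their interaction, for which choosing the family $\mathcal{S}$ and the test faces so that the two conditions genuinely conflict is the crux.
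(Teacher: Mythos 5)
There is a genuine gap: the entire proof rests on the claim that a single biclique of a simple extension can cover at most a $2^{-n+\ordero{n}}$ fraction of your chosen non-incidences, and you never establish this --- you yourself identify it as ``the main obstacle''. Worse, for the double-star family it is far from clear that the claim is even true. The facets of $\SpanTreePoly{K_n}$ trace very large sets on your cube $\{0,1\}^{n-2}$: a nonnegativity facet $x_{\{1,v\}}=0$, or a subtour facet $x(E[U])\le|U|-1$ with $U=\{2,v\}$, cuts out a codimension-one subcube of double stars, so the complement condition ``lies on no facet of the family $\mathcal{G}$'' does not by itself force $\BicliqueVertices\cap\mathcal{S}$ to be small; and the isolatedness of $\BicliqueVertices$ (minimum polytope distance $3$, i.e.\ minimum Hamming distance $3$ on your cube) gives only a sphere-packing bound of roughly $2^{n-2}/n$ per biclique, hence only $\orderOmega{n}$ bicliques. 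Extracting an exponential bound from the \emph{interaction} of these two constraints is exactly the missing argument, and nothing in your sketch (the appeal to Theorem~\ref{TheoremSimpleBicliqueFaces}(a) on $2$-faces, or to the hypersimplex proof) supplies it.

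The paper closes this gap by a different, more rigid choice of test trees: for each $W\subseteq V\setminus\{s,t\}$ with $|W|=\lfloor n/2\rfloor$ it takes a tree $\sptTW$ consisting of an $s$--$t$-path through $W$ plus a star at $t$ on the remaining nodes. If $\sptTW\in\BicliqueVertices$, then for each $y\in W$ the adjacent tree obtained by short-cutting the path around $y$ must lie on some facet in $\BicliqueFaces$ (by Lemma~\ref{TheoremSpanningTreeCommonNeighborClosure} and Lemma~\ref{TheoremBicliqueVerticesAndSubfaces}), and the path structure forces that facet to be precisely the degree facet $x(E[V\setminus\{y\}])\le n-2$. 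Hence every tree in $\BicliqueVertices$ has no leaf in $W$, so $\BicliqueVertices$ cannot contain $T(W')$ for any $W'\ne W$ (some $v\in W\setminus W'$ is a leaf of $T(W')$), giving at least $\bincoeff{n-2}{\lfloor n/2\rfloor}$ proper facets directly --- no fractional covering over edge--vertex non-incidences is needed. If you want to salvage your approach, you must either prove the per-biclique fraction bound for the double stars (which I doubt holds, for the reasons above) or replace the double stars by a family for which covering each prescribed neighbor pins down a unique facet of $\SpanTreePoly{K_n}$, which is essentially what the paper's path-plus-star trees achieve.
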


  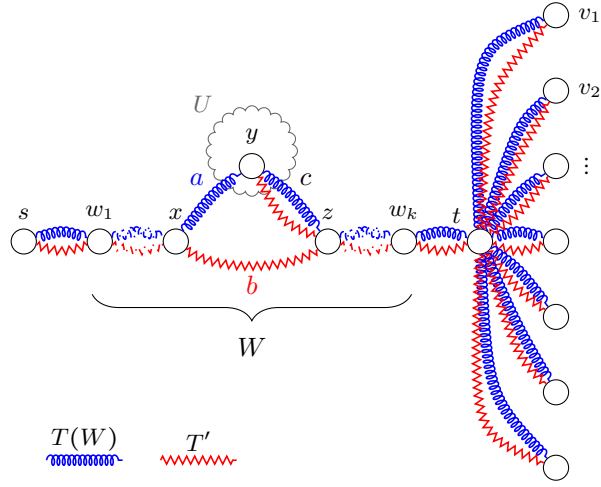
\begin{figure}[ht]
    \begin{center}
      \begin{tikzpicture}
        \piInputConfigured{TikZ-extended-formulations-simple-spanning-tree.tex}%
          [job=simple-bicliques,\tikzOptions]
      \end{tikzpicture}
    \end{center}
    \caption{Construction for Theorem~\ref{TheoremSpanningTreePolytopeSimpleExtensionComplexity}.}
    \label{FigureSPTpathStar}
  \end{figure}

\begin{proof}
  Assume $n \geq 5$ and
  let $s,t$ be any two distinct nodes of $K_n$.
  Consider the following set of subsets of the nodes $V \setminus \setdef{s,t}$
  \begin{equation*}
    \sptAllW{} := \setdef{ W \subseteq V \setminus \setdef{s,t} }[ |W| = \lfloor n/2 \rfloor ] \ .
  \end{equation*}
  Let $k := \lfloor n/2 \rfloor$, fix some ordering of the nodes
  $w_1, w_2, \ldots, w_k \in W$ for each $W \in \sptAllW$
  and define a specific tree $\sptTW$
  \begin{equation*}
    \begin{array}{rcl}
      \sptTW  & :=  & \setdef{ \setdef{s,w_1}, \setdef{w_k,t} } \\
              &     & \cup \setdef{ \setdef{w_i, w_{i+1}} }[ i \in [k-1] ] \\
              &     & \cup \setdef{ \setdef{t,v} }[ v \notin \left(W \cup \setdef{s,t}\right) ]
    \end{array}
  \end{equation*}
  as depicted in Figure~\ref{FigureSPTpathStar}.
  We will now prove that for each simple extension of $\SpanTreePoly{K_n}$ 
  every such $\sptTW$ must be in a different
  induced vertex set.

  Let $W \in \sptAllW$ be some set $W$ with tree $\sptTW$.
  Let $\BicliqueFaces$ and $\BicliqueVertices$ be the face and vertex sets,
  respectively, induced by a proper facet of a simple extension
  such that $\sptTW$ is in $\BicliqueVertices$.
  Construct an adjacent tree $\sptTprime$ as follows.
  
  Choose some vertex $y \in W$ and let $x$-$y$-$z$ be a subpath of the
  $s$-$t$-path in $\sptTW$ in that order.
  Note that $\setdef{x,y,z} \subseteq W \cup \setdef{s,t}$.
  Denote by $a$, $b$, $c$ the edges $\setdef{x,y}$, $\setdef{x,z}$, and $\setdef{y,z}$, respectively.

  Let $\sptTprime = \sptTW \setminus \setdef{a} \cup \setdef{b}$.
  Because $\sptTprime$ is adjacent to $\sptTW$,
  by Lemma~\ref{TheoremSpanningTreeCommonNeighborClosure} we know $\sptTprime \notin \BicliqueVertices$.
  Hence, due to Lemma~\ref{TheoremBicliqueVerticesAndSubfaces},
  there must be a facet $F \in \BicliqueFaces$ defined by $x(E[U]) \leq |U| - 1$
  (with $|U| \geq 2$) which contains $\sptTprime$.
  Furthermore, this facet does not contain $\sptTW$ because $\sptTW \in \BicliqueVertices$ holds.
  Hence, we have $|\sptTW[U]| < |U| - 1$ and $|\sptTprime[U]| = |U| - 1$.
  This implies $|\sptTW \cap \delta(U)| \geq 2$ and $|\sptTprime \cap \delta(U)| = 1$.
  Obviously, $a \in \delta(U)$ and $b \notin \delta(U)$.

  Then $x,z \in U$ if and only if $y \notin U$ because $a \in \delta(U)$ and $b \notin \delta(U)$.
  Hence, $c \in \delta(U)$, i.e., $T \cap \delta(U) = \setdef{c}$.
  Due to $|U| \geq 2$, this implies $U = V \setminus \setdef{y}$.

  As this can be argued for any $y \in W$, we have that the facets defined by
  $V \setminus \setdef{y}$ are in $\BicliqueFaces$ for all $y \in W$.
  Hence, $\BicliqueVertices$ contains only trees $T$
  for which $|T \cap \delta(V \setminus \setdef{y})| = |T \cap \delta(\setdef{y})| \geq 2$, i.e.,
  no leaf of $T$ is in $W$.

  This shows that for distinct sets $W,W' \in \sptAllW$,
  any vertex set $\BicliqueVertices$ 
  induced by a proper facet of a simple extension that contains $\sptTW$
  does not contain $T(W')$
  because any vertex $v \in W \setminus W'$ is a leaf of $T(W')$.
  Hence, the number of simple bicliques is at least 
  \begin{equation*}
    |\sptAllW| = \bincoeff{n-2}{\lfloor n/2 \rfloor} \in \orderOmega{2^{n - \ordero{n}}} \ .
  \end{equation*} 
\end{proof}

\pagebreak[4]
\section{Flow Polytopes for Acyclic Networks}
\label{SectionFlowPolytopes}

Many extended formulations model the solutions to the original formulation
via a path in a specifically constructed directed acyclic graph.
A simple example is the linear-size formulation for the parity polytope by Carr and Konjevod \cite{CarrK05},
and a more elaborate one is the approximate formulation for 0/1-knapsack polytopes by Bienstock \cite{Bienstock08}.

Let $D = (V,A)$ be a directed acyclic graph with fixed source $s \in V$ and sink $t \in V$.
By $\FlowPaths[s,t]{D}$ we denote the arc-sets of $s$-$t$-paths in $D$.
For some path $P \in \FlowPaths[s,t]{D}$ and nodes $u,v \in V(P)$,
we denote by $P|_{(u,v)}$ the subpath of $P$ going from $u$ to $v$.

For acyclic graphs, the convex hull of the characteristic vectors of all $s$-$t$-paths
is equal to the uncapacitated $s$-$t$-flow polytope $\FlowPoly[s-t]{D}$ with flow-value $1$,
since the linear description of the latter is totally unimodular.
The inequalities in this description correspond to nonnegativity constraints
of the arc variables, and a vertex corresponding to the path $P$ is obviously non-incident
to a facet corresponding to $y_a \geq 0$ if and only if $a \in P$ holds.
Adjacency in the path polytope was characterized by Gallo and Sodini \cite{GalloS78} and
can be stated as follows:
Two $s$-$t$-paths $P,P'$ correspond to adjacent vertices 
of the polytope if and only if
their symmetric difference consists of two paths from 
$x$ to $y$ ($x,y \in V$, $x \neq y$) without 
common inner nodes.
In other words, they must split and merge exactly once.

Such a network formulation can be easily decomposed into two
independent formulations if a node $v$ exists such that
every $s$-$t$-path traverses $v$.
We are now interested in the simple extension complexities
of flow polytopes of $s$-$t$-networks that cannot be decomposed
in such a trivial way.
Our main result in this section is the following:

\begin{theorem}
  \label{TheoremFlowSimpleExtensionComplexity}
  Let $D = (V,A)$ be a directed acyclic graph with source $s \in V$ and sink $t \in V$
  such that for every node $v \in V \setminus \setdef{s,t}$ there exists
  an $s$-$t$-path in $D$ which does not traverse $v$.

  Then the simple extension complexity of $\FlowPoly[s-t]{D} \subseteq \R_+^A$ is equal
  to the number of distinct $s$-$t$-paths $|\FlowPaths[s,t]{D}|$. 
\end{theorem}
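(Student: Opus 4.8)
The plan is to prove the lower bound $\sxc[P]\ge N$ for $P:=\FlowPoly[s-t]{D}$ and $N:=|\FlowPaths[s,t]{D}|$, the matching upper bound being the trivial extension of $P$ by its $N$ vertices. By Corollary~\ref{TheoremSimpleBicliqueVerticesCoveredByClosed} the vertex set of $P$ is covered by $\sxc[P]$ vertex sets $\BicliqueVertices$ induced by proper facets of a minimal simple extension, so it suffices to show that each such $\BicliqueVertices$ is a singleton. Since the facets of $P$ are among the nonnegativity facets $\{y_a\ge 0\}$, Theorem~\ref{TheoremSimpleBicliqueFaces}~(b) tells us that the inclusionwise-maximal members of the accompanying face set $\BicliqueFaces$ are such facets, indexed by some arc set $A_0$, and Lemma~\ref{TheoremBicliqueVerticesAndSubfaces} then gives $\BicliqueVertices=\setdef{P'\in\FlowPaths[s,t]{D}}[A_0\subseteq P']$, with $A_0\neq\emptyset$ because the facet is proper. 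Everything thus reduces to the combinatorial claim: \emph{if $A_0\neq\emptyset$ and $W:=\setdef{P'\in\FlowPaths[s,t]{D}}[A_0\subseteq P']$ is $\CommonNeighbors$-closed, then $|W|=1$.}

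To prove the claim, assume $|W|\ge2$ and derive a contradiction. First I would reduce to an \emph{adjacent} pair in $W$: given distinct $P_1,P_2\in W$, modify $P_1$ inside the first maximal subpath on which $P_1$ and $P_2$ split and re-merge (a ``segment'', in the sense of the Gallo--Sodini adjacency characterization recalled above) by substituting $P_2$'s route across that segment. The two routes across a segment are internally vertex-disjoint, hence share no arc, so --- since $A_0\subseteq P_1\cap P_2$ lies outside every segment --- the modified path still contains $A_0$ and so lies in $W$; being one segment away from $P_1$ it is adjacent to $P_1$. So we may assume $P_1,P_2\in W$ differ in exactly one segment, from $x$ to $y$, and $A_0$ is contained in the common part of $P_1$ and $P_2$. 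Fix $a^*=(u^*,v^*)\in A_0$; after possibly reversing all arcs of $D$ (which preserves the hypotheses and $|W|$) we may assume $a^*$ lies on the common $s$-$x$ subpath, so $u^*\prec v^*\preceq x\prec y$ in the topological order, and in particular $v^*\notin\setdef{s,t}$. By the non-decomposability hypothesis there is an $s$-$t$-path $\tilde Q$ avoiding $v^*$.

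The core step is to splice a stretch of $\tilde Q$ into $P_1$ so as to produce an $s$-$t$-path $Q$ that is adjacent to \emph{both} $P_1$ and $P_2$ and avoids $v^*$, hence avoids $a^*$; then $Q\notin W$ while $Q$ has two neighbours in $W$, contradicting that $W$ is $\CommonNeighbors$-closed. Walking along $\tilde Q$, let $p'$ be the last vertex of $\tilde Q$ lying on the common part of $P_1,P_2$ that topologically precedes $u^*$, and let $q'$ be the first vertex of $\tilde Q$ after $p'$ that lies on the common part and topologically succeeds $y$ (both exist, as $s$ and $t$ qualify). Using that in the acyclic $D$ any path between two vertices visits only vertices lying topologically between them, one checks --- possibly after a more careful choice of $p',q'$ along $\tilde Q$ --- that the $\tilde Q$-subpath $\rho$ from $p'$ to $q'$ is internally disjoint from $P_1\cup P_2$. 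Replacing the $p'$-to-$q'$ portion of $P_1$ by $\rho$ yields a genuine $s$-$t$-path $Q$; since $p'$ and $q'$ lie on the common part while the segment $[x,y]$ and the vertex $v^*$ lie strictly between them on $P_1$, the symmetric difference of $Q$ with $P_1$, and also with $P_2$, equals the single segment $[p',q']$, so $Q$ is adjacent to each of $P_1,P_2$; and $v^*$, being interior to the replaced portion of $P_1$, is not on $\rho$ and hence not on $Q$. This yields the contradiction, proves the claim, and hence $\sxc[P]\ge N$, so $\sxc[P]=N$.

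I expect the delicate point to be the construction of $\rho$: one must secure simultaneously that the spliced detour (i)~is internally disjoint from $P_1\cup P_2$ (so that $Q$ is a simple path and each symmetric difference is a single segment), (ii)~starts before $a^*$ and ends beyond $y$ (so that adjacency to $P_2$, not merely to $P_1$, is obtained), and (iii)~avoids $v^*$. Achieving all three at once may need a slightly more clever selection of $p'$ and $q'$ than the naive ``first/last common vertex'' one above --- for instance by tracking the first re-entry of $\tilde Q$ into $P_1\cup P_2$ beyond $u^*$ --- together with a short case distinction; this is exactly where acyclicity and the non-decomposability assumption are used.
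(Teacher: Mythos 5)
Your high-level strategy is the same as the paper's: reduce via Theorem~\ref{TheoremSimpleBicliqueFaces}~(b), Lemma~\ref{TheoremBicliqueVerticesAndSubfaces} and Corollary~\ref{TheoremSimpleBicliqueVerticesCoveredByClosed} to showing that a set of the form $W=\setdef{P'\in\FlowPaths[s,t]{D}}[A_0\subseteq P']$ with $A_0\neq\emptyset$ and $|W|\geq 2$ cannot be $\CommonNeighbors$-closed, and then exhibit one path outside $W$ adjacent to two distinct paths inside $W$. That reduction, and your preliminary step of passing to an adjacent pair $P_1,P_2\in W$, are correct. The gap is exactly at the point you flag, and it is not a routine verification: once $P_1,P_2$ and $a^*$ are fixed in advance, the detour $\rho$ with your properties (i)--(iii) need not exist for the given avoiding path $\tilde Q$. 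Concretely, $\tilde Q$ may re-enter $P_1\cup P_2$ between $p'$ and $q'$ either at a common vertex $w$ with $u^*\preceq w\preceq x$, or at an internal vertex of one of the two $x$--$y$ routes. In the first case you can neither truncate the splice at $w$ (for $w=u^*$ the resulting path still contains $a^*$; for $w\succ v^*$ it differs from $P_2$ in two segments and is adjacent only to $P_1$) nor continue through $w$ (the symmetric difference with $P_1$ is then no longer a single split--merge). In the second case no choice of $q'$ on the common part makes $\rho$ internally disjoint from $P_1\cup P_2$. So no cleverer selection of $p',q'$ alone can repair the argument while $P_1$, $P_2$ and $a^*$ stay fixed.

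The paper's proof avoids this by reversing the order of the choices. It first fixes the arc $(x',x)$ of the common arc set at the place where the reference path $P$ leaves that set, takes the path $P'$ avoiding the node $x$, and locates its exit point $s'$ and its first re-entry point $t'$ into $V(P|_{(x,t)})\cup S$, where $S$ is the set of all nodes lying on some $x$--$y$ detour. Only then does it build the two inside paths: in the case $t'\in S$ it routes $P_1$ through an $x$--$t'$--$y$ path $W$, so that the outside path $P_3$ can follow $P'$ from $s'$ to $t'$ and re-merge with $P_1$ precisely there; the case $t'\notin S$ is handled separately. In other words, the pair of paths in $\BicliqueVertices$ is chosen adaptively as a function of where the avoiding path re-enters, which is what makes all the disjointness and adjacency claims checkable. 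To complete your argument you would need either this adaptive choice of $P_1,P_2$ or an iterated application of $\CommonNeighbors$-closedness along a chain of intermediate paths (as in the spanning tree proof, Lemma~\ref{TheoremSpanningTreeCommonNeighborClosure}); as written, the decisive construction is missing.
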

  
\begin{proof}
  Let $\BicliqueFaces$ and $\BicliqueVertices$ be the face and vertex sets 
  induced by a proper facet of a simple extension of $\FlowPoly[s-t]{D}$, respectively.
  The goal is to prove $|\BicliqueVertices| = 1$, let us
  assume for the sake of contradiction $|\BicliqueVertices| \geq 2$.
  By Theorem~\ref{TheoremSimpleBicliqueFaces}~(b),
  the (inclusion-wise) maximal faces in $\BicliqueFaces$ are facets.
  Let $\emptyset \neq B' \subseteq A$ be the arc set corresponding to these facets.
  By Lemma~\ref{TheoremBicliqueVerticesAndSubfaces},
  $\BicliqueVertices$ is the set of (characteristic vectors of)
  paths $P \in \FlowPaths[s,t]{D}$ satisfying $P \supseteq B'$.
  Let $\flowB \subseteq A$ be the set of arcs common to all such paths
  and note that $\flowB \supseteq B' \neq \emptyset$.

  By construction, for any path 
  $P \in \BicliqueVertices$ and any arc $a \in P \setminus \flowB$, there is an alternative
  path $P' \in \BicliqueVertices$ with $a \notin P'$.

  Let us fix one of the paths $\flowP \in \BicliqueVertices$.
  Let, without loss of generality, $(x',x) \in \flowB$ be such that the
  arc of $\flowP$ leaving $x$ (exists and) is not in $\flowB$.
  If such an arc does not exist, since $\flowB \neq \flowP$,
  there must be an arc $(x,x') \in \flowB$ such that
  the arc of $\flowP$ entering $x$ is not in $\flowB$.
  In this case, revert the directions of all arcs in $D$
  and exchange the roles of $s$ and $t$ and apply subsequent arguments to the new network.
  Let $y$ be the first node on $\flowP|_{(x,t)}$ different 
  from $x$ and incident to some arc in $\flowB$ or,
  if no such $y$ exists, let $y := t$.
  Paths in $\BicliqueVertices$ must leave $x$ and enter $y$ but may differ inbetween.
  The set of traversed nodes is defined as
  $$S := \setdef{ v \in V \setminus \setdef{x,y} }[%
    \exists \,\textrm{$x$-$v$-$y$-path in } D ] \ .$$
  By construction, $x \notin \setdef{s,t}$ and by the assumptions of the Theorem
  there exists a path $\flowPprime \in \FlowPaths[s,t]{D}$
  which does not traverse $x$.
  Let $s'$ be the last node on $\flowP|_{(s,x)}$ that is traversed by $\flowPprime$.
  Analogously, let $t'$ be the first node of $V(\flowP|_{(x,t)}) \cup S$
  that is traversed by $\flowPprime$.
  Note that $t' \neq x$ since $t'$ is traversed by $\flowPprime$ but $x$ is not.
  We now distinguish two cases for which we show that $\BicliqueVertices$ 
  is not $\CommonNeighbors$-closed yielding a contradiction to 
  Corollary~\ref{TheoremSimpleBicliqueVertices}:
  
  \begin{figure}[ht]
    \begin{center}
      \begin{tikzpicture}
        \piInputConfigured{TikZ-extended-formulations-simple-acyclic-flows.tex}%
          [job=common neighbor,spindle,alternative path b,spindle path a,spindle path b,final paths,\tikzOptions]
      \end{tikzpicture}
    \end{center}
    \caption{Construction for Case~1 in the Proof of 
      Theorem~\ref{TheoremFlowSimpleExtensionComplexity}.}
    \label{FigureFlowSimpleExtensionComplexity1}
  \end{figure}
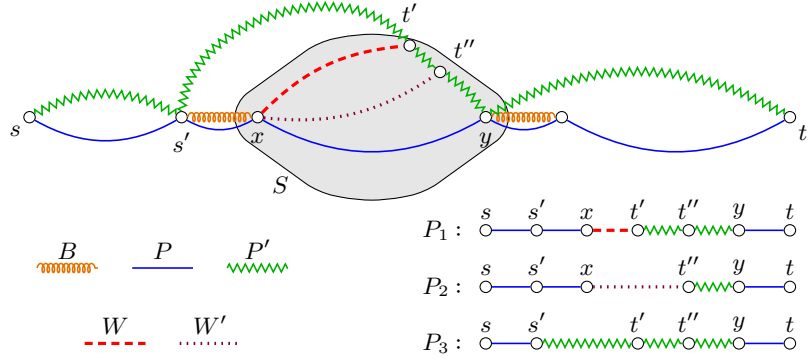
  
  \begin{figure}[ht]
    \begin{center}
      \begin{tikzpicture}
        \piInputConfigured{TikZ-extended-formulations-simple-acyclic-flows.tex}%
          [job=common neighbor,spindle,alternative path a,spindle path a,spindle path b,final paths,\tikzOptions]
      \end{tikzpicture}
    \end{center}
    \caption{Construction for Case~2 in the Proof of 
      Theorem~\ref{TheoremFlowSimpleExtensionComplexity}.}
    \label{FigureFlowSimpleExtensionComplexity2}
  \end{figure}
  
  \medskip
  
  \noindent
  \textbf{Case 1:} $t' \in S$.
  
  By definition of $S$ there must be an $x$-$t'$-$y$-path $\flowW$.
  Note that $t'$ could be equal to $y$ and then $\flowW$ could agree with $\flowP|_{(x,y)}$ as well.
  Let $(z,t') \in \flowW$ be the arc of $\flowW$ entering $t'$.
  By definition of $y$, we conclude that $(z,t') \notin \flowB$.
  Hence, there is an alternative $x$-$y$-path $\flowWprime \neq \flowW$ 
  which does not use $(z,t')$.
  We choose $\flowWprime$ such that it uses as many arcs of $\flowW|_{(t',y)}$ as possible.
  Construct the following three paths (see Figure~\ref{FigureFlowSimpleExtensionComplexity1}):
  \begin{equation*}
    \begin{array}{rcl}
      P_1 & := & \flowP|_{(s,x)} \cup \flowW \cup \flowP|_{(y,t)} \\
      P_2 & := & \flowP|_{(s,x)} \cup \flowWprime \cup \flowP|_{(y,t)} \\
      P_3 & := & \flowP|_{(s,s')} \cup \flowPprime|_{(s',t')} \cup \flowW|_{(t',y)} \cup \flowP|_{(y,t)}
    \end{array}
  \end{equation*}
  By construction $P_1, P_2 \in \BicliqueVertices$ but $P_3 \notin \BicliqueVertices$.
  $P_1$ and $P_3$ are adjacent in $\FlowPoly[s-t]{D}$ since they
  only differ in the disjoint paths from $s'$ to $t'$.
  Analogously, $P_2$ and $P_3$ are adjacent
  and thus, contradicting the fact that $\BicliqueVertices$ is $\CommonNeighbors$-closed.

  \medskip

  \noindent
  \textbf{Case 2:} $t' \notin S$.

  Let $\flowW := \flowP|_{(x,y)}$ and let $\flowWprime$ be 
  a different $x$-$y$-path which must exist by definition of $y$.
  Construct the following three paths (see Figure~\ref{FigureFlowSimpleExtensionComplexity2}):
  \begin{equation*}
    \begin{array}{rcl}
      P_1 & := & \flowP = \flowP|_{(s,x)} \cup \flowW \cup \flowP|_{(y,t)} \\
      P_2 & := & \flowP|_{(s,x)} \cup \flowWprime \cup \flowP|_{(y,t)} \\
      P_3 & := & \flowP|_{(s,s')} \cup \flowPprime|_{(s',t')} \cup \flowP|_{(t',t)}
    \end{array}
  \end{equation*}
  By construction $P_1, P_2 \in \BicliqueVertices$ but $P_3 \notin \BicliqueVertices$ 
  since it does not use $(x',x) \in \flowB$.
  $P_1$ and $P_3$ as well as $P_2$ and $P_3$ are adjacent in $\FlowPoly[s-t]{D}$ since they
  only differ in the disjoint paths from $s'$ to $t'$.
  Again, this contradicts the fact that $\BicliqueVertices$ is $\CommonNeighbors$-closed.
\end{proof}

\pagebreak[3]
\section{Perfect Matching Polytope}
\label{SectionPerfectMatchingPolytope}

The \emph{matching polytope} and the \emph{perfect matching polytope}
of a graph $G = (V,E)$ are defined as
\begin{align*}
  \MatchPoly{G}     &:= \conv \setdef{ \chi(M) }[ M \text{ matching in } G ] \\
  \PerfMatchPoly{G} &:= \conv \setdef{ \chi(M) }[ M \text{ perfect matching in } G ] \ ,
\end{align*}
where $\chi(M) \in \setdef{0,1}^E$ is the characteristic vector
of the set $M \subseteq E$, i.e., $\chi(M)_e = 1$ if and only if $e \in M$.
We mainly consider the (perfect) matching polytope of the complete graph
with $2n$ nodes $\PerfMatchPoly{K_{2n}}$.
Our main theorem here reads as follows:
\begin{theorem}
  \label{TheoremPerfectMatchingPolytopeSimpleExtensionComplexity}
  The simple extension complexity of the perfect matching polytope of $K_{2n}$
  is equal to its number of vertices $\frac{(2n)!}{n! \cdot 2^n}$.
\end{theorem}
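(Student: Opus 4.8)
The plan is to apply the machinery of Corollary~\ref{TheoremSimpleBicliqueVerticesCoveredByClosed}: it suffices to show that every proper $\CommonNeighbors$-closed set in the graph of $\PerfMatchPoly{K_{2n}}$ is a singleton, since then the only way to cover all vertices by proper $\CommonNeighbors$-closed sets is to use one set per vertex, forcing $\sxc[\PerfMatchPoly{K_{2n}}] \geq \frac{(2n)!}{n!\cdot 2^n}$; the reverse inequality is the trivial extension. Recall that two perfect matchings $M$ and $M'$ are adjacent in $\PerfMatchPoly{K_{2n}}$ exactly when $M \Delta M'$ is a single alternating cycle. So I must prove: if $M_1, M_2 \in \BicliqueVertices$ with $M_1$ adjacent to $M_2$, and $M_1$ is adjacent to some $M_3 \notin \BicliqueVertices$, then $\BicliqueVertices$ cannot be $\CommonNeighbors$-closed — i.e., I can build a short chain of adjacencies, starting from $M_1, M_2$ and staying inside $\BicliqueVertices$ by closure, that eventually reaches a common neighbor of two elements of $\BicliqueVertices$ and of $M_3$, forcing $M_3 \in \BicliqueVertices$, a contradiction.

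The key geometric object is $M_1 \Delta M_2 = C$, a single alternating cycle, and $M_1 \Delta M_3 = C'$, another single alternating cycle, with $C \neq C'$ (else $M_2 = M_3$, contradicting $M_2 \in \BicliqueVertices \not\ni M_3$). The two cycles $C, C'$ both alternate with respect to $M_1$, so they share the same $M_1$-edges wherever they overlap; the heart of the argument is to analyze how $C$ and $C'$ interact on the edge set of $K_{2n}$ and to produce intermediate perfect matchings. Here is where the announced strengthening of Padberg and Rao (Theorem~\ref{TheoremThreeCommonNeighbor}) enters: it should say that any two distinct alternating cycles through $M_1$ have ``enough'' common neighbors in the matching polytope — in particular that one can find a matching $M^*$ adjacent to $M_1$, to $M_2$ and to $M_3$ simultaneously, or a short path of such matchings. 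Concretely I would first handle the easy case where $C$ and $C'$ are vertex-disjoint (or share only $M_1$-edges in a trivial way): then $M^* := M_1 \Delta C \Delta C' = M_2 \Delta C'$ is a perfect matching adjacent to $M_1$ (symmetric difference $C \cup C'$, two cycles — not adjacent!), so this needs the three-common-neighbor result to route through an intermediate matching. In the harder case $C$ and $C'$ genuinely interleave, and one decomposes $C \cup C'$ into alternating pieces and walks through them one cycle at a time, each step being an adjacency and each intermediate matching landing in $\BicliqueVertices$ by $\CommonNeighbors$-closure, precisely mirroring the Case-1/Case-2 structure used in the spanning tree and flow proofs.

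The main obstacle is exactly the combinatorial analysis of the union $C \cup C'$ of two alternating cycles sharing the matching $M_1$: unlike the graphic-matroid situation for spanning trees (where a 4-element symmetric difference decomposes neatly into a small forest plus three swappable edges), here the cycles can be long and interleave in complicated ways, so the bookkeeping of which intermediate matchings are adjacent to which is delicate. I expect the real work — and the reason Theorem~\ref{TheoremThreeCommonNeighbor} is isolated as a standalone result ``of independent interest'' — to be establishing that three pairwise-adjacency (a common neighbor of $M_1$, $M_2$, $M_3$, or a bounded-length alternating path of matchings connecting the relevant vertices) always exists; once that lemma is in hand, the deduction that $M_3 \in \BicliqueVertices$ via repeated use of $\CommonNeighbors[\BicliqueVertices] = \BicliqueVertices$ is a short formal argument of the same flavor as the other sections. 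After disposing of the non-isolated case this way, singletons are the only proper $\CommonNeighbors$-closed sets, and Corollary~\ref{TheoremSimpleBicliqueVerticesCoveredByClosed} closes the proof.
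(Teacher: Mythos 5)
Your overall strategy is the paper's: reduce via Corollary~\ref{TheoremSimpleBicliqueVerticesCoveredByClosed} to showing that every proper $\CommonNeighbors$-closed vertex set is a singleton, and derive that from a ``three matchings have a common neighbor'' theorem (the paper's Theorem~\ref{TheoremThreeCommonNeighbor}), whose statement you essentially guess correctly. The gap is in the reduction itself. You set out to prove: if $M_1,M_2\in\BicliqueVertices$ are adjacent and $M_1$ has a neighbor $M_3\notin\BicliqueVertices$, then closedness fails. First, this configuration need not exist in an arbitrary proper $\CommonNeighbors$-closed set $W$ with $|W|\ge 2$: nothing forces $W$ to contain two adjacent vertices, and even if it does, nothing forces a member of such a pair to have a neighbor outside $W$. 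Second, even granting your claim, it only shows that no adjacent pair inside $W$ has a member with a neighbor outside $W$; to get down to singletons you would still have to exclude, say, independent sets of size at least two, which needs an extra ingredient (such as Padberg and Rao's diameter-$2$ theorem \cite{PadbergR74}) that you never invoke --- your closing sentence ``after disposing of the non-isolated case \dots\ singletons are the only proper $\CommonNeighbors$-closed sets'' silently assumes that isolated implies singleton.

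The paper's choice of configuration avoids both problems: by connectivity of the polytope graph one takes the adjacent pair straddling the boundary, $M_1\notin W$ adjacent to $M_2\in W$, and lets $M_3\in W\setminus\{M_2\}$ be \emph{arbitrary} (in particular not necessarily adjacent to $M_1$ or $M_2$). Theorem~\ref{TheoremThreeCommonNeighbor} then yields either a triangle (so $M_1$ has the two neighbors $M_2,M_3$ in $W$, contradiction) or a common neighbor $M'$, which lies in $W$ because it is adjacent to $M_2$ and $M_3$, whence $M_1$ has the two neighbors $M_2,M'$ in $W$ --- again a contradiction. Note that this application needs the common-neighbor theorem for an arbitrary third matching, whereas your sketch of its proof (two alternating cycles $C=M_1\Delta M_2$ and $C'=M_1\Delta M_3$ through $M_1$) only covers the case where $M_3$ is adjacent to $M_1$; the paper's proof of Theorem~\ref{TheoremThreeCommonNeighbor} handles a general $M_3$ via ``good'' matchings and a minimality argument on the number of components of $M_3\cup M'$, which is considerably more than a decomposition of $C\cup C'$.
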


We first give the high-level proof which uses a structural result presented afterwards.

\begin{proof}
The proof is based on Theorem~\ref{TheoremThreeCommonNeighbor}.
It states that for any three perfect matchings $M_1$, $M_2$, $M_3$ in $K_{2n}$,
where $M_1$ and $M_2$ are adjacent (i.e., the corresponding vertices are adjacent),
$M_3$ is adjacent to both $M_1$ and $M_2$ or there exists a 
fourth matching $M'$ adjacent to all three matchings.

Let $P = \PerfMatchPoly{K_{2n}}$
and suppose that $\BicliqueVertices$ is a proper $\CommonNeighbors$-closed set
with $|\BicliqueVertices| \geq 2$.
Since the polytope's graph is connected
there exists a matching $M_1 \notin \BicliqueVertices$ adjacent to 
some matching $M_2 \in \BicliqueVertices$.
Let $M_3 \in \BicliqueVertices \setminus \setdef{M_2}$.
As $\BicliqueVertices$ is $\CommonNeighbors$-closed 
and $M_3 \in \BicliqueVertices$ holds, $\setdef{M_1, M_2, M_3}$ cannot be a triangle.
Hence, by Theorem~\ref{TheoremThreeCommonNeighbor} mentioned above,
there exists a common neighbor matching $M'$.
Since $M'$ is adjacent to $M_2$ and $M_3$, we conclude $M' \in \BicliqueVertices$.
But now $M_1 \notin \BicliqueVertices$ is adjacent to the two matchings $M_2$ and $M'$
from $\BicliqueVertices$ contradicting the fact that $\BicliqueVertices$ is
$\CommonNeighbors$-closed.

Hence all proper $\CommonNeighbors$-closed sets are singletons
which implies the claim due to Corollary~\ref{TheoremSimpleBicliqueVerticesCoveredByClosed}.
\end{proof}

Since $\PerfMatchPoly{K_{2n}}$ is a face of $\MatchPoly{K_{2n}}$ and simple extensions
of polytopes induce simple extensions of their faces we obtain the following corollary
for the latter polytope.

\begin{corollary}
  \label{TheoremSimpleExtensionComplexityMatchingPolytope}
  The simple extension complexity of the matching polytope of $K_{2n}$
  is at least $\frac{(2n)!}{n! \cdot 2^n}$.
\end{corollary}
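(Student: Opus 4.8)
The plan is to obtain this corollary at once from Theorem~\ref{TheoremPerfectMatchingPolytopeSimpleExtensionComplexity}, by observing that $\PerfMatchPoly{K_{2n}}$ sits inside $\MatchPoly{K_{2n}}$ as a face and that passing to a face cannot increase the simple extension complexity.

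First I would record that $\PerfMatchPoly{K_{2n}}$ is indeed a face of $\MatchPoly{K_{2n}}$: every matching of $K_{2n}$ uses at most $n$ edges, with equality precisely for the perfect matchings, so the face of $\MatchPoly{K_{2n}}$ on which the linear functional $\scalprod{\onevec}{x}$ attains its maximum value $n$ has exactly the characteristic vectors of perfect matchings as its vertices, hence equals $\PerfMatchPoly{K_{2n}}$.

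Second, I would establish the general principle that $\sxc[F] \le \sxc[R]$ whenever $F$ is a face of a polytope $R$. Given a simple extension $R = \pi(Q)$ in which $Q$ has $\sxc[R]$ facets, the lattice embedding $j$ from Section~\ref{SectionTechniques} makes $\hat{G} := \pi^{-1}(F) \cap Q = \setdef{ y \in Q }[\pi(y) \in F]$ a face of $Q$, and $\pi$ restricts to a surjection from $\hat{G}$ onto $F$ (onto because $\pi(Q) = R \supseteq F$), so $\hat{G}$ is an extension of $F$. A face of a simple polytope is again simple: if $\hat{G}$ has codimension $k$ in $Q$, then a vertex $v$ of $\hat{G}$ lies in exactly $\dim[Q]$ facets of $Q$, exactly $k$ of which cut out $\hat{G}$, so $v$ lies in exactly $\dim[Q]-k = \dim[\hat{G}]$ facets of $\hat{G}$. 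Finally, each facet of $\hat{G}$ has the form $\hat{G}\cap\hat{f}$ for some facet $\hat{f}$ of $Q$, and distinct facets of $\hat{G}$ arise from distinct such $\hat{f}$; hence $\hat{G}$ has at most $\sxc[R]$ facets, which gives $\sxc[F] \le \sxc[R]$.

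Combining these two points with $F = \PerfMatchPoly{K_{2n}}$ and $R = \MatchPoly{K_{2n}}$ and then invoking Theorem~\ref{TheoremPerfectMatchingPolytopeSimpleExtensionComplexity} yields
\[
  \sxc[\MatchPoly{K_{2n}}] \;\ge\; \sxc[\PerfMatchPoly{K_{2n}}] \;=\; \frac{(2n)!}{n!\cdot 2^n},
\]
which is the assertion. I do not expect a genuine obstacle here: all the real work is concentrated in Theorem~\ref{TheoremPerfectMatchingPolytopeSimpleExtensionComplexity} (and, beneath it, the adjacency statement Theorem~\ref{TheoremThreeCommonNeighbor}), which we may assume. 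The only steps needing a moment's care are the two standard facts used above — that $\PerfMatchPoly{K_{2n}}$ is a face of $\MatchPoly{K_{2n}}$, and that a face of a simple polytope is simple with no more facets than the ambient polytope.
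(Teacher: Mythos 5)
Your proposal is correct and is exactly the argument the paper intends: the paper derives the corollary in one sentence from the facts that $\PerfMatchPoly{K_{2n}}$ is a face of $\MatchPoly{K_{2n}}$ and that simple extensions induce simple extensions of faces, which are precisely the two points you verify in detail. Your elaboration of why a face of a simple extension is again a simple extension with no more facets is a sound filling-in of what the paper leaves implicit.
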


\subsection{Adjacency Result for the Perfect Matching Polytope}

We now turn to the mentioned result on the adjacency structure 
of the perfect matching polytope of $K_{2n}$. 
It is a generalization of the diameter result of Padberg and Rao's in \cite{PadbergR74}.

Clearly, the symmetric difference $M \Delta M'$ of two perfect 
matchings is always a disjoint union of alternating cycles, so-called \emph{$M$-$M'$-cycles}.
Chv{\'a}tal \cite{Chvatal75} showed that
(the vertices corresponding to) two perfect matchings $M$ and $M'$ are adjacent 
if and only if $M \Delta M'$ forms a single alternating cycle.
For an edge set $F$ we denote by $V(F)$ the set of nodes covered by the edges of $F$.
We start with an easy construction and modify the resulting matching later.

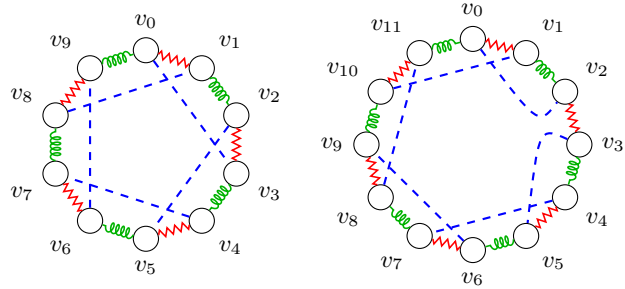
\begin{figure}[ht]
  \begin{center}
    \begin{tikzpicture}
      \piInputConfigured{TikZ-matchings-common-neighbor.tex}%
        [job=adjacent-cycle,\tikzOptions]
    \end{tikzpicture}
  \end{center}
  \caption{Lemma~\ref{TheoremAdjacentCycle} for a $10$-cycle and a $12$-cycle.}
  \label{FigureAdjacentCycle}
\end{figure}

\begin{lemma}
  \label{TheoremAdjacentCycle}
  For any adjacent perfect matchings $\pmMone$, $\pmMtwo$ there exists a perfect matching
  $\pmMprime$ adjacent to $\pmMone$ and $\pmMtwo$ that satisfies
  \begin{gather*}
    V(\pmMone \Delta \pmMtwo) = V(\pmMone \Delta \pmMprime) = V(\pmMtwo \Delta \pmMprime)
  \end{gather*}
  and $\pmMprime \cap (\pmMone \Delta \pmMtwo) = \emptyset$.
\end{lemma}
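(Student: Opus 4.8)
The plan is to build $\pmMprime$ directly from the structure of the single alternating cycle $C := \pmMone \Delta \pmMtwo$, which by Chv\'atal's characterization is all we know about the pair. Write $C = (v_0, v_1, \dots, v_{2\ell-1}, v_0)$ as a cyclic sequence of its $2\ell$ nodes, where $\ell \geq 2$, and where (say) the edges $\{v_{2i}, v_{2i+1}\}$ belong to $\pmMone$ and the edges $\{v_{2i+1}, v_{2i+2}\}$ belong to $\pmMtwo$ (indices mod $2\ell$). I want a perfect matching $\pmMprime$ whose edges on $V(C)$ form a third perfect matching of the node set $V(C)$, disjoint from every edge of $C$, and such that $\pmMprime \Delta \pmMone$ and $\pmMprime \Delta \pmMtwo$ are each a single alternating cycle on all of $V(C)$; outside $V(C)$ I simply let $\pmMprime$ agree with $\pmMone$ (equivalently $\pmMtwo$, since they coincide there). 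The last requirement, together with Chv\'atal's criterion, immediately gives adjacency of $\pmMprime$ to both $\pmMone$ and $\pmMtwo$, and the conditions $V(\pmMone \Delta \pmMtwo) = V(\pmMone \Delta \pmMprime) = V(\pmMtwo \Delta \pmMprime) = V(C)$ and $\pmMprime \cap C = \emptyset$ are then built in.

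The concrete construction I would use depends on the parity of $\ell$. When $C$ is a $2\ell$-cycle with $\ell$ even, one can take $\pmMprime$ on $V(C)$ to be the ``long diagonal'' type matching: on the cycle $v_0 v_1 \cdots v_{2\ell-1}$, pair $v_{2i+1}$ with $v_{2i+2}$ shifted — more carefully, I would pair up nodes so that $\pmMprime$ is a single alternating cycle when combined with either $\pmMone$ or $\pmMtwo$. A clean way: split the $2\ell$-cycle into two ``halves'' and cross-match them. The cases $2\ell = 10$ and $2\ell = 12$ in Figure~\ref{FigureAdjacentCycle} suggest exactly this: for a $10$-cycle and a $12$-cycle the picture shows the explicit third matching, and the general pattern is read off from the parity of $\ell$. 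So the two subcases are ``$\ell$ odd'' and ``$\ell$ even'', handled by two slightly different cyclic pairing rules; in each I exhibit $\pmMprime$ restricted to $V(C)$ and then verify by direct inspection that $\pmMprime \cup C_{\pmMone}$ and $\pmMprime \cup C_{\pmMtwo}$ (where $C_{\pmMi} := \pmMi \cap C$) are each a single $2\ell$-cycle rather than a disjoint union of shorter cycles.

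The main obstacle is precisely this verification: that with the chosen pairing, $\pmMprime \Delta \pmMone$ does not break into several small alternating cycles. For a $2\ell$-cycle this is a statement about whether a certain permutation of $\mathbb{Z}_{2\ell}$ (composing ``$\pmMone$-step'' with ``$\pmMprime$-step'') is a single $2\ell$-cycle, i.e.\ a primitivity/gcd condition on the shift used to define $\pmMprime$. I would choose the shift to make this automatic: for instance, matching $v_{2i}$ to $v_{2i+3}$ (indices mod $2\ell$) when that is consistent with avoiding $C$, so that the relevant step size is $\pm 1$ around a cycle of length $2\ell$ after alternation; one checks the gcd is $1$ and hence the symmetric difference is a single cycle. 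The edge-disjointness $\pmMprime \cap C = \emptyset$ is immediate from the choice of offsets (we never reuse an edge $\{v_i, v_{i+1}\}$), and covering exactly $V(C)$ is clear. I expect that once the right offset is fixed the verification is a short parity/gcd computation, with the figure doing the work of conveying why it succeeds; the only real care is ensuring the small cases ($\ell = 2$, a $4$-cycle, and $\ell = 3$, a $6$-cycle) are not degenerate, which I would check by hand.
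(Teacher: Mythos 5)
Your high-level plan is exactly the paper's: construct $\pmMprime$ explicitly as a set of chords of the alternating cycle $C=\pmMone\Delta\pmMtwo$, split into cases by the parity of $\ell$, check that both symmetric differences are single cycles, and extend by $\pmMone\cap\pmMtwo$ outside $V(C)$. However, there is a genuine gap: you never actually produce a construction that works in the even case, and the verification you sketch for your candidate rule is wrong there. With the uniform rule $\{v_i,v_{i+3}\}$ for even $i$ (which is the paper's choice for odd $\ell$), trace the composite permutation: from an even-indexed node, $\pmMprime$ steps $+3$ and then $\pmMtwo$ steps $+1$, so $\pmMtwo\Delta\pmMprime$ acts on the $\ell$ even indices by $i\mapsto i+4 \pmod{2\ell}$. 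The number of orbits is $\ell/\gcd(\ell,2)$, so the symmetric difference with $\pmMtwo$ is a single cycle if and only if $\ell$ is odd; for even $\ell$ it splits into two cycles (e.g.\ for $\ell=4$ you get two $4$-cycles), and for $\ell=2$ the rule is not even disjoint from $C$ since $v_{i+3}=v_{i-1}$. So the ``gcd is $1$'' computation you invoke, when carried out, refutes your candidate rather than confirming it. The paper repairs this by perturbing the matching locally for even $\ell$ (replacing $\{v_0,v_3\},\{v_2,v_5\}$ by $\{v_0,v_2\},\{v_3,v_5\}$), which splices the two orbits into one; some such modification, together with an explicit re-verification of both symmetric differences, is the actual content of the lemma and is missing from your write-up.

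You correctly flag the single-cycle verification as the main obstacle, but deferring it to ``the figure'' and ``direct inspection'' leaves the proof incomplete precisely at the point where it could (and for your stated rule, does) fail. To close the gap you need to (i) exhibit a concrete matching for even $\ell$, (ii) run the orbit/gcd computation separately for $\pmMone\Delta\pmMprime$ and $\pmMtwo\Delta\pmMprime$ — they behave differently because the two composite steps are $+2$ and $+4$ — and (iii) confirm disjointness from $C$ for the small cases $\ell=2,3$ with the actual rules used, not the generic one.
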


\begin{proof}
  Let $v_0, v_1, \ldots, v_{2l-1}, v_{2l} = v_0$
  be the set of ordered nodes of the cycle $M_1 \Delta M_2$
  and identify $v_{2l+1} = v_1$.
  If $l$ is odd,
  \begin{align*}
    \pmMprime &:= \setdef{\setdef{v_i, v_{i+3}}}[i = 0,2,4,6,\ldots,2l-2]
  \end{align*}
  induces $M_i$-$\pmMprime$-cycles visiting the nodes in the following order:
  \begin{gather*}
    \begin{array}{ll}
      \pmMone \Delta \pmMprime: & v_0, v_3, v_2, v_5, v_4, v_7, v_6, \ldots, v_{2l-1}, \\
                            & v_{2l-2}, v_1, v_0 \\
      \pmMtwo \Delta \pmMprime: & v_0, v_3, v_4, v_7,v_8, \ldots, v_{2l-3}, v_{2l-2}, \\
                            & v_1, v_2, v_5, v_6, \ldots, v_{2l-4}, v_{2l-1}, v_0
    \end{array}
  \end{gather*}
  If $l$ is even,
  \begin{align*}
      \pmMprime &:= \setdef{\setdef{v_i, v_{i+3}}}[i = 4,6,\ldots,2l-2] \\
                & \cupdot\, \setdef{ \setdef{v_0,v_2}, \setdef{v_3,v_5} }
  \end{align*}
  induces $M_i$-$\pmMprime$-cycles visiting the nodes in the following order:
  \begin{gather*}
    \begin{array}{ll}
      \pmMone \Delta \pmMprime: & v_0, v_2, v_3, v_5, v_4, v_7, v_6, \ldots, v_{2l-1}, \\
                            & v_{2l-2}, v_1, v_0 \\
      \pmMtwo \Delta \pmMprime: & v_0, v_2, v_1, v_{2l-2}, v_{2l-3}, \ldots, v_6, v_5, v_3, \\
                            & v_4, v_7, v_8, \ldots, v_{2l-4}, v_{2l-1}, v_0
    \end{array}
  \end{gather*}
  Figure~\ref{FigureAdjacentCycle} shows examples for both cases.
  It is easy to see that the node sets of the cycles equals the node set of $\pmMone \Delta \pmMtwo$
  and that $\pmMprime \cap (\pmMone \Delta \pmMtwo) = \emptyset$ holds.
  In order to produce a perfect matching on all nodes
  we simply add $\pmMone \cap \pmMtwo$ to $\pmMprime$ 
  which does not change any of the two required properties.
\end{proof}

Suppose there is a third perfect matching $\pmMthree$ and we want to make $\pmMprime$
adjacent to this matching as well. The remainder of this section
is dedicated to the proof of the following result.

\begin{theorem}
  \label{TheoremThreeCommonNeighbor}
  Let $\pmMone$ and $\pmMtwo$ be two adjacent perfect
  matchings and $\pmMthree$ a third perfect matching.
  Then the three matchings are pairwise adjacent 
  or there exists a perfect matching $\pmMprime$ adjacent to all three.
\end{theorem}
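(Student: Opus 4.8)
The plan is to start from the matching $\pmMprime$ produced by Lemma~\ref{TheoremAdjacentCycle}, which is already adjacent to both $\pmMone$ and $\pmMtwo$ and is supported entirely outside the cycle $C := \pmMone \Delta \pmMtwo$ on the nodes $V(C)$, while agreeing with $\pmMone \cap \pmMtwo$ off $V(C)$. If $\pmMthree$ is already adjacent to both $\pmMone$ and $\pmMtwo$ we are done by hypothesis, so assume it is not, i.e.\ at least one of $\pmMone \Delta \pmMthree$ and $\pmMtwo \Delta \pmMthree$ decomposes into two or more alternating cycles. The quantity to control is $\pmMprime \Delta \pmMthree$: we need it to be a single alternating cycle. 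First I would understand how $\pmMthree$ interacts with $V(C)$: the edges of $\pmMthree$ either lie inside $V(C)$, lie outside $V(C)$, or cross the cut $\delta(V(C))$. Since $\pmMprime$ restricted to $V\setminus V(C)$ equals $\pmMone\cap\pmMtwo$, outside $V(C)$ the symmetric difference $\pmMprime \Delta \pmMthree$ coincides with $(\pmMone\cap\pmMtwo)\Delta\pmMthree$, so the "bad" cycle structure of $\pmMthree$ relative to $\pmMone,\pmMtwo$ is confined to $V(C)$ together with the crossing edges; this is where all the freedom in choosing $\pmMprime$ on $V(C)$ must be spent.

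The key technical step, I expect, is to realize that Lemma~\ref{TheoremAdjacentCycle} is not the only way to build a good $\pmMprime$ on the cycle $C$: there is a whole family of matchings on $V(C)$ (parametrized, roughly, by "how far to shift" — the constructions $\{v_i,v_{i+3}\}$ versus $\{v_i,v_{i+5}\}$ etc., plus the parity fixes) each of which is disjoint from $C$ and adjacent to both $\pmMone|_{V(C)}$ and $\pmMtwo|_{V(C)}$. I would argue that among these there is always a choice that additionally makes $\pmMprime \Delta \pmMthree$ connected. Concretely, I would track the cycles of $\pmMprime\Delta\pmMthree$: each such cycle alternates between $\pmMprime$-edges (all inside $V(C)$ or... no, $\pmMprime$ may also have edges outside $V(C)$, namely those of $\pmMone\cap\pmMtwo$) and $\pmMthree$-edges; the cycles that stay entirely outside $V(C)$ are exactly the nontrivial cycles of $(\pmMone\cap\pmMtwo)\Delta\pmMthree$ and are untouched by any choice on $V(C)$. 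So in fact the first thing to check is that $(\pmMone\cap\pmMtwo)\Delta\pmMthree$ has no nontrivial cycle outside $V(C)$ — equivalently every edge of $\pmMthree$ incident to $V\setminus V(C)$ is either in $\pmMone\cap\pmMtwo$ or crosses into $V(C)$; if this fails, I suspect one has to first enlarge the cycle we are working with (replace $C$ by a larger alternating structure, or iterate the argument), and this merging/enlarging is the main obstacle.

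Granting that the "outside" is clean, the remaining combinatorial core is: given the cyclic sequence of nodes $v_0,\dots,v_{2l-1}$ of $C$, with $\pmMone$ the even-index-to-odd-index perfect matching and $\pmMtwo$ the odd-to-even one, and given the extra data of where $\pmMthree$ attaches to these nodes (some $v_i$ are matched by $\pmMthree$ inside $V(C)$, some across $\delta(V(C))$ to fixed outside partners, consistently so that everything closes up), choose a "chord pattern" $\pmMprime$ on $V(C)$ disjoint from $C$, adjacent to $\pmMone$ and $\pmMtwo$ on $V(C)$, and such that gluing $\pmMprime$ to $\pmMthree$ produces one cycle rather than several. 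I would prove this by an explicit parity/shift argument mirroring the two cases (l odd / l even) of Lemma~\ref{TheoremAdjacentCycle}: show that the shift-by-$3$ construction already works unless $\pmMthree$ "resonates" with it in a way that creates a short cycle, and in that exceptional case a shift-by-$5$ (or a localized swap analogous to the $\{v_0,v_2\},\{v_3,v_5\}$ correction) breaks the resonance. The adjacency of $\pmMprime$ to $\pmMone$, to $\pmMtwo$, and to $\pmMthree$ is in each case just Chvátal's criterion: verify that each of the three relevant symmetric differences is a single cycle, which for the first two is inherited from Lemma~\ref{TheoremAdjacentCycle} and for the third is exactly what the choice of chord pattern was engineered to guarantee. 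I expect the bookkeeping of the cycle traversal order (as in the displayed node sequences in the proof of Lemma~\ref{TheoremAdjacentCycle}) to be the most delicate part, and the case analysis on how $\pmMthree$ meets $V(C)$ — especially the degenerate cases where $\pmMthree$ shares edges with $C$ or with $\pmMone\cap\pmMtwo$ — to be where hidden subcases lurk.
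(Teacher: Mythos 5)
Your proposal correctly isolates the two difficulties, but it resolves neither of them, and the second one is where the real content of the theorem lies. First, you observe that the cycles of $\pmMthree\cup\pmMprime$ lying entirely outside $V(\pmMone\Delta\pmMtwo)$ cannot be destroyed by any choice of chords on $V(\pmMone\Delta\pmMtwo)$, and you defer this to an unspecified ``enlarging/iterating'' step. This part is actually fixable by an explicit construction (the paper's Lemma~\ref{TheoremGoodMatchingExists} splices each such outer cycle into the matching of Lemma~\ref{TheoremAdjacentCycle} by a chain of swaps $\setdef{u_i,v_i}\mapsto\setdef{u_i,v_{i+1}}$, which lengthens the $\pmMone$- and $\pmMtwo$-cycles rather than breaking adjacency), but you would need to supply it; as written it is a gap.

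The more serious gap is the core claim that, once the outside is clean, some member of a small family of shift patterns ($\setdef{v_i,v_{i+3}}$, $\setdef{v_i,v_{i+5}}$, plus parity corrections) always makes $\pmMthree\Delta\pmMprime$ a single cycle. There is no reason to believe this. The matching $\pmMthree$ can attach to the nodes of $V(\pmMone\Delta\pmMtwo)$ in essentially arbitrary ways, and $\pmMthree\cup\pmMprime$ can have many components for every fixed global shift (for instance, $\pmMthree$ restricted to these nodes could itself coincide with one of your candidate chord patterns, or could consist of many short local structures each of which closes up a small cycle with any ``uniform'' choice of chords). A one-shot ``break the resonance'' argument would have to control all components simultaneously, and you give no mechanism for that. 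The paper avoids this entirely by an extremal argument: it defines a list of invariants (``good'' matchings), takes a good $\pmMprime$ minimizing the number $c(\pmMthree,\pmMprime)$ of $\pmMthree$-$\pmMprime$-components, and shows that if $c\ge 2$ one can merge two components by a local two-edge swap at an edge $e\in\pmMj\setminus\pmMk$ joining them, while preserving goodness --- the invariants (in particular the lower bound $|\pmMj\Delta\pmMprime|\ge 6$ extracted from Properties~(D) and~(E)) are exactly what guarantee the swap does not destroy adjacency to $\pmMone$ or $\pmMtwo$. Without replacing your direct construction by such an iterative merging argument (or by an actual proof of the ``resonance'' claim), the proposal does not establish the theorem.
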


Before we state the proof, we introduce the notion of good perfect matchings.
The first part of the proof is dedicated to proving their existence,
while the second part shows that good perfect matchings, which are
minimal in a certain sense, satisfy the properties claimed by
Theorem~\ref{TheoremThreeCommonNeighbor}.

We first fix some notation for the rest of this section.
Let $\pmMone$, $\pmMtwo$ and $\pmMthree$ be three perfect matchings
such that $\pmMone$ and $\pmMtwo$ are adjacent.
Denote by $V^* := V(\pmMone \Delta \pmMtwo)$ the node set of the single
alternating $\pmMone$-$\pmMtwo$-cycle.

For a perfect matching $\pmMprime$ we denote by \emph{$\pmMthree$-$\pmMprime$-components}
the connected components of $\pmMthree \cup \pmMprime$
and by $c(\pmMthree,\pmMprime)$ their number.
We call a perfect matching $\pmMprime$ \emph{good} if
the following five properties hold:
\begin{enumerate}[(A)]
\item\label{PropertyMatchingAdjacency}
  $\pmMprime$ is adjacent to $\pmMone$ and $\pmMtwo$.
\item\label{PropertyMatchingCyclesTouch}
  All $\pmMthree$-$\pmMprime$-components touch the node-set $V^*$ of $\pmMone \Delta \pmMtwo$.
\item\label{PropertyMatchingCycleEdges}
  All $\pmMprime$-edges which also belong to the $\pmMone$-$\pmMtwo$-cycle, i.e., the edges from $\pmMprime \cap (\pmMone \Delta \pmMtwo)$,
  are contained in the same $\pmMthree$-$\pmMprime$-component.
\item\label{PropertyMatchingComponentSum}
  $\pmMthree \neq \pmMprime$ and $c(\pmMthree,\pmMprime) \leq \frac{1}{2}|\pmMone \Delta \pmMprime| + \frac{1}{2}|\pmMtwo \Delta \pmMprime| - 3$ holds.
\item\label{PropertyMatchingComponentNodesCovered}
  $c(\pmMthree,\pmMprime) \leq \frac{1}{2}|\pmMj \Delta \pmMprime|$ holds for $j=1,2$
  and equality holds only if we have $V(\pmMk \Delta \pmMprime) \supseteq V^*$
  for $k = 3 - j$, i.e., $\setdef{\pmMj,\pmMk} = \setdef{\pmMone,\pmMtwo}$.
\end{enumerate}

We first establish the existence of good perfect matchings.

\begin{lemma}
  \label{TheoremGoodMatchingExists}
  Let $\pmMone$, $\pmMtwo$, $\pmMthree$ be three perfect
  matchings of $K_{2n}$ such that $\pmMone \cap \pmMtwo \cap \pmMthree = \emptyset$ holds
  and such that $\pmMone$ and $\pmMtwo$ are adjacent, but $\pmMthree$
  is not adjacent to both of them.
  
  Then there exists a good perfect matching $\pmMprime$.
\end{lemma}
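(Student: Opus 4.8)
The goal is to produce, given $\pmMone$, $\pmMtwo$ adjacent and a $\pmMthree$ that is not adjacent to both, with $\pmMone \cap \pmMtwo \cap \pmMthree = \emptyset$, a perfect matching $\pmMprime$ satisfying properties~(\ref{PropertyMatchingAdjacency})--(\ref{PropertyMatchingComponentNodesCovered}). The natural starting point is the matching $\pmMprime_0$ furnished by Lemma~\ref{TheoremAdjacentCycle}, which already gives property~(\ref{PropertyMatchingAdjacency}) and the useful structural facts $V(\pmMone \Delta \pmMtwo) = V(\pmMone \Delta \pmMprime_0) = V(\pmMtwo \Delta \pmMprime_0) = V^*$ and $\pmMprime_0 \cap (\pmMone \Delta \pmMtwo) = \emptyset$. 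With the latter, property~(\ref{PropertyMatchingCycleEdges}) holds vacuously for $\pmMprime_0$, so the real work is to arrange (\ref{PropertyMatchingCyclesTouch}), (\ref{PropertyMatchingComponentSum}), (\ref{PropertyMatchingComponentNodesCovered}) while preserving (\ref{PropertyMatchingAdjacency}).

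First I would normalize away everything outside the cycle: since $\pmMprime_0$ agrees with $\pmMone \cap \pmMtwo$ off $V^*$, the $\pmMthree$-$\pmMprime_0$-components living entirely outside $V^*$ are exactly the $\pmMthree$-edges and $(\pmMone\cap\pmMtwo)$-edges that coincide — but $\pmMone \cap \pmMtwo \cap \pmMthree = \emptyset$ forbids an edge in both, so off $V^*$ these two matchings form genuine alternating paths/cycles; the point is that any such component not touching $V^*$ is a component of $\pmMthree \cup \pmMprime_0$ that we must kill. The device is to \emph{switch} $\pmMprime_0$ to $\pmMthree$ along such an offending cycle: for a $\pmMthree$-$\pmMprime_0$-cycle $C$ disjoint from $V^*$, replace $\pmMprime_0 \cap C$ by $\pmMthree \cap C$. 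This does not change $\pmMone \Delta \pmMprime$ or $\pmMtwo \Delta \pmMprime$ inside $V^*$ (so adjacency to $\pmMone,\pmMtwo$ is kept), it strictly decreases the number of $\pmMthree$-$\pmMprime$-components, and it keeps $\pmMprime \cap (\pmMone\Delta\pmMtwo) = \emptyset$. Iterating until no bad component remains gives a matching satisfying (\ref{PropertyMatchingAdjacency}), (\ref{PropertyMatchingCyclesTouch}), (\ref{PropertyMatchingCycleEdges}).

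Then I would take, among all perfect matchings satisfying (\ref{PropertyMatchingAdjacency})--(\ref{PropertyMatchingCyclesTouch}) [and with $\pmMprime \cap (\pmMone\Delta\pmMtwo)=\emptyset$], one that is \emph{minimal} — e.g.\ minimizing the pair $(c(\pmMthree,\pmMprime), |\pmMone\Delta\pmMprime| + |\pmMtwo\Delta\pmMprime|)$ lexicographically, or minimizing $c(\pmMthree,\pmMprime)$ and among those an appropriate size — and argue that minimality forces (\ref{PropertyMatchingComponentSum}) and (\ref{PropertyMatchingComponentNodesCovered}). The rough counting intuition is: each $\pmMthree$-$\pmMprime$-component touches $V^*$, and $|\pmMone\Delta\pmMprime|, |\pmMtwo\Delta\pmMprime|$ are (twice) the numbers of edges on the alternating cycles covering $V^*$; a component touching $V^*$ at few nodes but not "using up" enough of the symmetric-difference cycles can be rerouted (a local switch along part of a $\pmMthree$-$\pmMprime$-cycle that meets $V^*$) to merge components or shrink the cycles without destroying (\ref{PropertyMatchingAdjacency})--(\ref{PropertyMatchingCyclesTouch}), contradicting minimality. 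For $\pmMthree = \pmMprime$ one checks directly that $\pmMthree$ would then be adjacent to both $\pmMone$ and $\pmMtwo$ (the components are the single cycles $\pmMone\Delta\pmMthree = \pmMone\Delta\pmMtwo$ up to the switch), contradicting the hypothesis, which also establishes the $\pmMthree \neq \pmMprime$ clause of (\ref{PropertyMatchingComponentSum}).

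\textbf{Main obstacle.} The delicate part is the minimality argument for (\ref{PropertyMatchingComponentSum}) and especially the equality case in (\ref{PropertyMatchingComponentNodesCovered}): one must show that whenever the component count is too large relative to $\tfrac12|\pmMj\Delta\pmMprime|$, there is a valid local modification decreasing $c(\pmMthree,\pmMprime)$ (or the chosen size measure) while keeping all of (\ref{PropertyMatchingAdjacency})--(\ref{PropertyMatchingCyclesTouch}) — in particular keeping $\pmMprime$ a perfect matching adjacent to \emph{both} $\pmMone$ and $\pmMtwo$, which constrains how $\pmMprime$ may intersect $V^*$. Getting the bookkeeping right — which arcs of which alternating cycles get rerouted, and why the resulting $\pmMone\Delta\pmMprime$ and $\pmMtwo\Delta\pmMprime$ remain single cycles — is where the real case analysis (presumably mirroring and generalizing the two-case split already used for Theorem~\ref{TheoremFlowSimpleExtensionComplexity} and in Lemma~\ref{TheoremSpanningTreeCommonNeighborClosure}) lives. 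I expect the $l$ even versus $l$ odd dichotomy from Lemma~\ref{TheoremAdjacentCycle} not to matter here, since we only use the qualitative output of that lemma.
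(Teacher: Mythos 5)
Your starting point (Lemma~\ref{TheoremAdjacentCycle}) matches the paper's, but the device you propose for achieving Property~(\ref{PropertyMatchingCyclesTouch}) does not work, and this is the heart of the lemma. Suppose $C$ is an $\pmMthree$-$\pmMprime_0$-cycle disjoint from $V^*$. Outside $V^*$ the matching $\pmMprime_0$ agrees with $\pmMone\cap\pmMtwo$, so on $V(C)$ we have $\pmMprime_0=\pmMone=\pmMtwo$. If you replace $\pmMprime_0\cap C$ by $\pmMthree\cap C$, the new matching differs from $\pmMone$ on all of $V(C)$, so $\pmMone\Delta\pmMprime$ now contains the alternating cycle $C$ \emph{in addition to} the original cycle on $V^*$; the symmetric difference is no longer a single cycle and adjacency to $\pmMone$ (and likewise $\pmMtwo$) is destroyed, i.e.\ Property~(\ref{PropertyMatchingAdjacency}) fails. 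Your claim that the switch "does not change $\pmMone\Delta\pmMprime$ \dots inside $V^*$ (so adjacency \dots is kept)" overlooks that adjacency is a global condition on the whole symmetric difference. Worse, the switch does not even repair Property~(\ref{PropertyMatchingCyclesTouch}): after it, each edge of $\pmMthree\cap C$ lies in both $\pmMthree$ and $\pmMprime$, hence is a singleton $\pmMthree$-$\pmMprime$-component still disjoint from $V^*$. The paper's construction is genuinely different: it picks an $\pmMbar$-edge $\{u_0,v_0\}$ with both endpoints in $V^*$ and one $\pmMbar$-edge $\{u_i,v_i\}$ from each offending cycle $C_i$, and performs the cyclic rewiring $\{u_i,v_i\}\mapsto\{u_i,v_{i+1}\}$. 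This threads all the outer cycles onto $V^*$ (fixing~(\ref{PropertyMatchingCyclesTouch})) while the effect on $\pmMj\Delta\pmMprime$ is merely to replace the single edge $\{u_0,v_0\}$ by a longer alternating path, so the symmetric difference remains one cycle and~(\ref{PropertyMatchingAdjacency}) survives.

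A second, structural discrepancy: you defer Properties~(\ref{PropertyMatchingComponentSum}) and~(\ref{PropertyMatchingComponentNodesCovered}) to an unspecified minimality argument with unspecified local moves, which is exactly the part you flag as the "main obstacle." In the paper this lemma needs no minimization at all: the explicit construction makes every $\pmMthree$-$\pmMprime$-component contain at least two nodes of $V^*$, which immediately gives $c(\pmMthree,\pmMprime)\le\tfrac12|V^*|=\tfrac12|\pmMj\Delta\pmMprime|$ and, with a short argument excluding the degenerate case $|\pmMtwo\Delta\pmMprime|=4$ (which would force $\pmMone,\pmMtwo,\pmMthree$ pairwise adjacent, contrary to hypothesis), yields~(\ref{PropertyMatchingComponentSum}) and~(\ref{PropertyMatchingComponentNodesCovered}) directly. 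The minimization over good matchings belongs to the proof of Theorem~\ref{TheoremThreeCommonNeighbor}, not to this existence lemma. As written, your proposal has a genuine gap at both the construction step and the verification step.
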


\begin{proof}
  Let $\pmMbar$ be the perfect matching adjacent to $\pmMone$ and $\pmMtwo$
  constructed in Lemma~\ref{TheoremAdjacentCycle}.

  Note that it satisfies $\pmMbar \cap (\pmMone \Delta \pmMtwo) = \emptyset$
  as well as $|\pmMone \Delta \pmMbar| = |\pmMtwo \Delta \pmMbar| = |\pmMone \Delta \pmMtwo| = |V^*| \geq 4$.
  We now enlarge the $M_i$-$\pmMbar$-cycles ($i=1,2$) in order
  to remove $\pmMthree$-$\pmMbar$-cycles which do not touch $V^*$ in order to satisfy Property~\eqref{PropertyMatchingCyclesTouch}.

  Let $\setdef{u_0, v_0}$ be an $\pmMbar$-edge with $u_0, v_0 \in V^*$.
  Let $C_1, C_2, \ldots, C_s$ be all $\pmMthree$-$\pmMbar$-cycles with $V(C_i) \cap V^* = \emptyset$
  and let, for $i=1,2,\ldots,s$, $\{u_i, v_i\} \in C_i \cap \pmMbar$ be any $\pmMbar$-edge of $C_i$.
  Define $\pmMprime$ to be
  \begin{equation}
    \begin{array}{rcl}
      \pmMprime & := & \left(\pmMbar \setminus 
          \setdef{\setdef{u_i,v_i}}[i=0,1,\ldots,s] \right) \\
              &    & \cup~ \setdef{\setdef{u_i, v_{i+1}}}[i=0,1,\ldots,s]
    \end{array}
  \end{equation}
  where $v_{s+1} = v_0$ (see Figure~\ref{FigureConnectOuterCycles}).

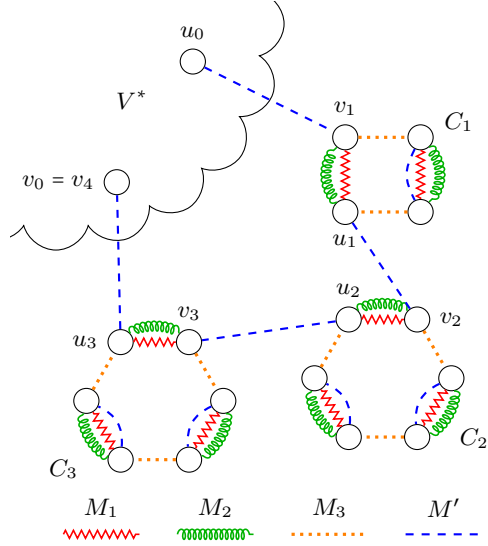
\begin{figure}[ht]
  \begin{center}
    \begin{tikzpicture}
      \piInputConfigured{TikZ-matchings-common-neighbor.tex}%
        [job=connect-outer-cycles,\tikzOptions]
    \end{tikzpicture}
    \begin{tikzpicture}
      \piInputConfigured{TikZ-matchings-common-neighbor.tex}%
        [job=legend existence,\tikzOptions]
    \end{tikzpicture}
  \end{center}
  \caption{Construction in Lemma~\ref{TheoremGoodMatchingExists} with $3$ outer cycles.}
  \label{FigureConnectOuterCycles}
\end{figure}

  We now verify Property~\eqref{PropertyMatchingAdjacency}, i.e., that $\pmMprime$ is adjacent to $M_i$ ($i=1,2$).
  Since the cycles $C_1, \ldots, C_s$ do not touch $V^*$,
  $\pmMbar$ and $M_i$ coincide outside $V^*$.
  Hence, the modification replaces the $\pmMbar$-edge $\setdef{u_0, v_0}$
  by an alternating $\pmMbar$-$M_i$-path from $u_0$ to $v_0$
  which visits exactly $2$ nodes of each $C_i$, thus indeed $\pmMprime$ is adjacent to both $\pmMone$ and $\pmMtwo$.

  In order to prove Properties~\eqref{PropertyMatchingCyclesTouch} and \eqref{PropertyMatchingComponentNodesCovered},
  let us check how the $\pmMthree$-$\pmMprime$-components look like.
  The $\pmMthree$-$\pmMprime$-cycle constructed above contains nodes $u_0$ and $v_0$.
  All other $\pmMthree$-$\pmMprime$-cycles were also $\pmMthree$-$\pmMbar$-cycles,
  and hence, by definition of the $C_i$ above, have at least two nodes of $V^*$
  in common since one of their $\pmMbar$-edges has both endpoints in $V^*$.
  All edges in $\pmMthree \cap \pmMprime$ must also lie in $V^*$
  since outside $V^* \cup V(C_1) \cup \ldots V(C_s)$ the matchings $\pmMprime$, $\pmMone$ and $\pmMtwo$
  are the same and $\pmMone \cap \pmMtwo \cap \pmMthree = \emptyset$ holds.

  Hence all $\pmMthree$-$\pmMprime$-components have at least two nodes in $V^*$,
  in particular, Property~\eqref{PropertyMatchingCyclesTouch} holds.
  It also proves the inequality statement of Property~\eqref{PropertyMatchingComponentNodesCovered}
  because $V(\pmMj \Delta \pmMprime) \supseteq V^*$ for $j=1,2$.
  Furthermore, the containment statement is due to Lemma~\ref{TheoremAdjacentCycle},
  since we have $V(\pmMk \Delta \pmMprime) \supseteq V(\pmMk \Delta \pmMbar) = V^*$ for $k=1,2$.

  In order to verify Property~\eqref{PropertyMatchingCycleEdges},
  observe that $\pmMbar$ satisfies $\pmMbar \cap (\pmMone \Delta \pmMtwo) = \emptyset$.
  Since all edges in $\pmMprime$ that were not in $\pmMbar$ have at least one endpoint
  outside $V^*$, we also have $\pmMprime \cap (\pmMone \Delta \pmMtwo) = \emptyset$.
  Hence, Property~\eqref{PropertyMatchingCycleEdges} is satisfied trivially.

  It remains to show that Property~\eqref{PropertyMatchingComponentSum} holds.
  Clearly, since $\pmMthree$ is adjacent to at most one of $\pmMone$, $\pmMtwo$,
  we have $\pmMthree \neq \pmMprime$.
  Since we established as part of Property~\eqref{PropertyMatchingComponentNodesCovered},
  that $c(\pmMthree,\pmMprime) \leq \frac{1}{2} |\pmMone \Delta \pmMprime|$ holds,
  it suffices to show that $|\pmMtwo \Delta \pmMprime|$ is at least $6$.
  Suppose, for the sake of contradiction, that this is not the case, i.e., $|\pmMtwo \Delta \pmMprime| \leq 4$ holds,
  which in turn implies $c(\pmMthree,\pmMprime) \leq 2$.
  Also $|\pmMtwo \Delta \pmMprime| \geq 4$ holds since both matchings are adjacent.
  This implies that we have equality in the containment $V(\pmMtwo \Delta \pmMprime) \supseteq V(\pmMtwo \Delta \pmMbar) = V^*$,
  from which we conclude that $s=0$ holds, i.e., $\pmMprime = \pmMbar$.
  These properties already prove that the $\pmMprime$-edges which match the nodes of $V^*$ are 
  exactly the two chords of the $\pmMone$-$\pmMtwo$-cycle (see Figure~\ref{FigureShort}).
  It is now easy to verify that then $\pmMone$, $\pmMtwo$, and $\pmMthree$ must be pairwise
  adjacent, a contradiction to the assumptions of this lemma.
  \begin{figure}[H]
    \begin{center}
      \begin{tikzpicture}
        \piInputConfigured{TikZ-matchings-common-neighbor.tex}%
          [job=short,\tikzOptions]
      \end{tikzpicture} \\
      \begin{tikzpicture}
      \piInputConfigured{TikZ-matchings-common-neighbor.tex}%
        [job=legend existence,\tikzOptions]
    \end{tikzpicture}
    \end{center}
    \caption{A special case where $\pmMthree$ is adjacent to $\pmMone$ and $\pmMtwo$.}
    \label{FigureShort}
  \end{figure}
\end{proof}

\begin{proof}[Proof of Theorem~\ref{TheoremThreeCommonNeighbor}]
  Let $\pmMone, \pmMtwo, \pmMthree$ be as stated in the Theorem.
  We assume, without loss of generality, that $\pmMone \cap \pmMtwo \cap \pmMthree = \emptyset$ holds, since
  otherwise we can restrict ourself to the graph with the nodes of this set deleted.
  We also assume that $\pmMone$, $\pmMtwo$, and $\pmMthree$ are not pairwise adjacent,
  since otherwise there is nothing to prove.

  In this situtation, Lemma~\ref{TheoremGoodMatchingExists} guarantees the existence of a good perfect matching.
  Let $\pmMprime$ be a good perfect matching with minimum $c(\pmMthree,\pmMprime)$.
  If $c(\pmMthree, \pmMprime) = 1$ holds, $\pmMprime$ is adjacent to $\pmMthree$
  (and also adjacent to $\pmMone$ and $\pmMtwo$ by Property~\eqref{PropertyMatchingAdjacency}) and we are done.
  Hence, for the sake of contradiction, we from now on assume that $c(\pmMthree,\pmMprime)$ is at least $2$.
  The strategy is to construct another good matching $\pmMstar$ with $c(\pmMthree,\pmMstar) < c(\pmMthree,\pmMprime)$.

  Due to Property~\eqref{PropertyMatchingCycleEdges}, there exists an $\pmMthree$-$\pmMprime$-component $\widehat{C}$
  containing all edges (if any) from $\pmMprime \cap (\pmMone \Delta \pmMtwo)$.
  $\pmMone \Delta \pmMtwo$ is a single cycle
  visiting all nodes in $V^*$ all of which are in some
  $\pmMthree$-$\pmMprime$-component as they are matched by $\pmMprime$.
  Thus, by Property~\eqref{PropertyMatchingCyclesTouch}, the component
  $\widehat{C}$ is connected to at least one other $\pmMthree$-$\pmMprime$-component
  by an edge from $\pmMone$-$\pmMtwo$.
  Let us choose such an edge $e \in \pmMj \setminus \pmMk$ for some $j=1,2$ and $k=3-j$,
  and if such edges exist for both values of $j$, choose $e$ such that $|\pmMj \Delta \pmMprime|$ is maximum.

  We claim that $|\pmMj \Delta \pmMprime| \geq 6$ holds.
  Suppose, for the sake of contradiction, $|\pmMj \Delta \pmMprime| = 4$.
  Property~\eqref{PropertyMatchingComponentNodesCovered} implies
  that $c(\pmMthree,\pmMprime) \leq 2$ holds. 
  Since $c(\pmMthree,\pmMprime) \geq 2$ also holds, we have equality and
  then Property~\eqref{PropertyMatchingComponentNodesCovered} implies
  that the $\pmMk$-$\pmMprime$-cycle 
  covers all nodes in $V^*$.
  This cycle connects the only two $\pmMthree$-$\pmMprime$-components $\widehat{C}$ and $C'$ via
  at least two $\pmMk$-edges $f,f'$ since the $\pmMprime$-edges of the cycle are inside their respective components.
  Note that $|\pmMk \Delta \pmMprime| \geq 2 c(\pmMthree,\pmMprime) + 6 - |\pmMj \Delta \pmMprime| \geq 6$ holds
  by Property~\eqref{PropertyMatchingComponentSum}.
  Hence, by the maximality assumption for the choice of edge $e$, this implies
  that there is no edge in $\pmMk \setminus \pmMj$ which connects $\widehat{C}$ to $C'$.
  Since $f,f' \in \pmMk$ both connect $\widehat{C}$ to $C'$, it follows that $f,f' \in \pmMj$ holds as well.
  Because $|\pmMj \Delta \pmMprime| = 4$ holds we have $\pmMj \setminus \pmMprime = \setdef{f,f'}$.
  Hence, the alternating $\pmMj$-$\pmMprime$-cycle of length $4$ is also an alternating
  $\pmMk$-$\pmMprime$-cycle. But the latter has at least length $6$ as argued above which yields a contradiction.

  To summarize, we now have two distinct $\pmMthree$-$\pmMprime$-components $\widehat{C}$ and $C'$
  connected by an edge $e \in \pmMj \setminus \pmMk$ for some $j = 1,2$ such that
  $|\pmMj \Delta \pmMprime| \geq 6$ holds and
  all edges from $\pmMprime \cap (\pmMone \Delta \pmMtwo)$ (if any) are in $\widehat{C}$.

  Let $u_1 \in V(\widehat{C})$ and $u_2 \in V(C')$ be the endpoints of edge $e$.
  Let $f_1 = \setdef{u_1, v_1}, f_2 = \setdef{u_2, v_2} \in \pmMprime$
  be the edges matching $u_1$ and $u_2$.
  We clearly have $v_1 \in V(\widehat{C})$ and $v_2 \in V(C')$ as well
  since $f_1$ and $f_2$ are contained in their respective $\pmMthree$-$\pmMprime$-components.
  In particular we have $f_1, f_2 \neq e$ and $u_1,u_2,v_1,v_2$ are pairwise distinct nodes.
  Because $f_2 \notin \pmMj$ ($e \in \pmMj$ and $e \cap f_2 \neq \emptyset$) and $f_2 \not\subseteq V(\widehat{C})$ hold,
  Property~\eqref{PropertyMatchingCycleEdges} implies $f_2 \notin \pmMk$ 
  (note that $\widehat{C}$ is the component mentioned in Property~\eqref{PropertyMatchingCycleEdges}).

  If also $f_1 \notin \pmMk$ holds, $f_1$ and $f_2$ belong to $\pmMk \Delta \pmMprime$ which is a single cycle
  by Property~\eqref{PropertyMatchingAdjacency}, and hence there exists a walk $W$
  on $\pmMk \Delta \pmMprime$ starting in $u_1$ with edge $f_1$ which visits nodes $u_2$ and $v_2$ (in some order).

  We are now ready to create a new good perfect matching $\pmMstar$ related to $\pmMprime$ by small changes.
  For this, we distinguish two cases. 
  For each case we establish Property~\eqref{PropertyMatchingAdjacency} separately, and afterwards
  prove the remaining properties for both cases in parallel.

\medskip
  \textbf{Case 1:} $f_1 \notin \pmMk$ holds and $u_2$ comes before $v_2$ on walk $W$.

\medskip
  Let
  $\pmMstar := \left( \pmMprime \setminus \setdef{f_1,f_2} \right) 
      \cup \setdef{\setdef{u_1,u_2}, \setdef{v_1,v_2}}$ (see Figure~\ref{FigureUglyExchange}).
\smallskip

  We now prove Property~\eqref{PropertyMatchingAdjacency} for $\pmMstar$.
  The symmetric difference $\pmMk \Delta \pmMstar$ consists of a single cycle
  that arises from the cycle $\pmMk \Delta \pmMprime$ by removing edges $f_1, f_2$
  and adding edges $\setdef{u_2,u_1}$ and $\setdef{v_2,v_1}$.
  For $\pmMj$, the situation is different, since there is a new component $e \in \pmMj \cap \pmMstar$.
  The new $\pmMj$-$\pmMstar$-cycle is now $2$ edges shorter than the $\pmMj$-$\pmMprime$-cycle was before
  since it visits the edge $\setdef{v_1, v_2}$ instead of the path $v_1-u_1-u_2-v_2$.
  But because we ensured $|\pmMj \Delta \pmMprime| \geq 6$ before, we have
  $|\pmMj \Delta \pmMstar| \geq 4$, that is, $\pmMstar$ is also adjacent to $\pmMj$.

  \begin{figure}[H]
    \begin{center}
      \begin{tikzpicture}
        \piInputConfigured{TikZ-matchings-common-neighbor.tex}%
          [job=ugly-exchange,\tikzOptions]
      \end{tikzpicture}
      \begin{tikzpicture}
        \piInputConfigured{TikZ-matchings-common-neighbor.tex}%
          [job=legend switchings,\tikzOptions]
      \end{tikzpicture}
    \end{center}
    \caption{Modifications in Case~1.}
    \label{FigureUglyExchange}
  \end{figure}
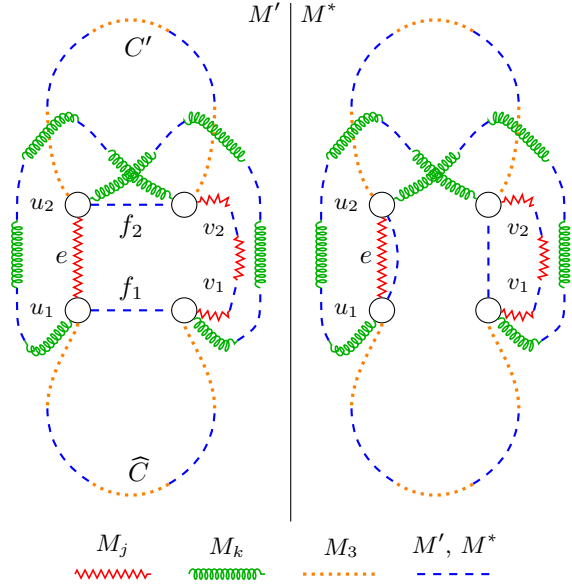
  
\medskip
  \textbf{Case 2:} $f_1 \in \pmMk$ holds or $f_1 \notin \pmMk$ and $u_2$ comes after $v_2$ on walk $W$.

\medskip
  Let
    $\pmMstar := \left( \pmMprime \setminus \setdef{f_1,f_2} \right) 
      \cup \setdef{\setdef{u_1,v_2}, \setdef{u_2,v_1}}$ (see Figure~\ref{FigureSimpleExchange}).
\smallskip
  
  We now prove Property~\eqref{PropertyMatchingAdjacency} for $\pmMstar$.
  The symmetric difference $\pmMk \Delta \pmMstar$ consists of a single cycle
  that arises from the cycle $\pmMk \Delta \pmMprime$ by removing edges $f_1, f_2$
  and adding edges $\setdef{v_2,u_1}$ and $\setdef{u_2,v_1}$.
  There is also only one $\pmMj$-$\pmMstar$-cycle which is essentially equal to the $\pmMj$-$\pmMprime$-cycle,
  except that the path $v_1-u_1-u_2-v_2$ was replaced by the path $v_1-u_2-u_1-v_2$.
  
  \begin{figure}[H]
    \begin{center}
      \begin{tikzpicture}
        \piInputConfigured{TikZ-matchings-common-neighbor.tex}%
          [job=simple-exchange,\tikzOptions]
      \end{tikzpicture}
      \begin{tikzpicture}
        \piInputConfigured{TikZ-matchings-common-neighbor.tex}%
          [job=legend switchings,\tikzOptions]
      \end{tikzpicture}
    \end{center}
    \caption{Modifications in Case~2.}
      \label{FigureSimpleExchange}
  \end{figure}
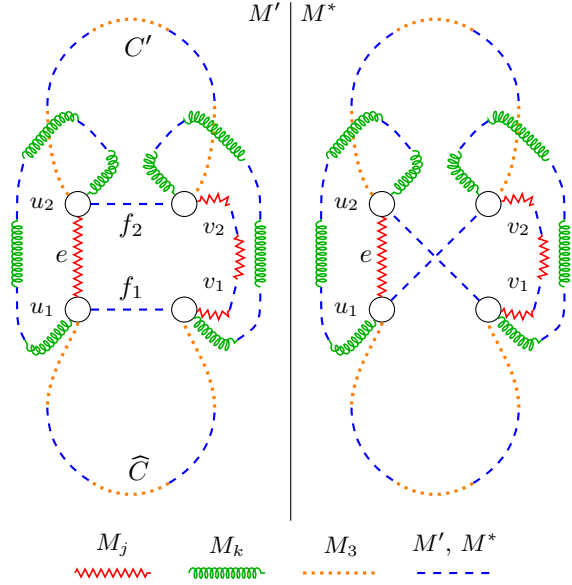

\medskip
  In Case~1 as well as in Case~2, $\pmMstar$ is again a perfect matching since $\pmMprime$ was a perfect matching
  and they differ only in the way the nodes $u_1, u_2, v_1, v_2$ are matched.
  Furthermore, $\pmMstar$ connects the two components $\widehat{C}$ and $C'$, that is,
  $c(\pmMthree,\pmMstar) = c(\pmMthree,\pmMprime) - 1$. 
  In order to create the desired contradiction to the minimality of $c(\pmMthree, \pmMprime)$,
  it remains to prove that $\pmMstar$ satisfies 
  Properties~\eqref{PropertyMatchingCyclesTouch},\eqref{PropertyMatchingCycleEdges},
  \eqref{PropertyMatchingComponentSum} and \eqref{PropertyMatchingComponentNodesCovered}.
  
  Property~\eqref{PropertyMatchingCyclesTouch} is satisfied for $\pmMstar$ because
  $V(\widehat{C})$ is contained in an $\pmMthree$-$\pmMstar$-component and
  all edges in $\pmMstar \setminus \pmMprime$ are contained in the same component.

  Property~\eqref{PropertyMatchingCycleEdges} is also satisfied for $\pmMstar$ since all $\pmMstar$-edges that were not
  $\pmMprime$-edges before, are contained in cycle $\widehat{C}$ (which was by definition the only
  $\pmMthree$-$\pmMprime$-cycle containing edges in $\pmMprime \cap (\pmMone \Delta \pmMtwo)$).

  We now prove that Property~\eqref{PropertyMatchingComponentSum} is satisfied for $\pmMstar$. We have
  \begin{align*}
    c(\pmMthree,\pmMstar) + 3 
    &= c(\pmMthree,\pmMprime) + 3 - 1
    \leq \frac{1}{2}|\pmMone \Delta \pmMprime| + \frac{1}{2}|\pmMtwo \Delta \pmMprime| -1\\
    &= \frac{1}{2}|\pmMk \Delta \pmMprime| + \frac{1}{2}|\pmMj \Delta \pmMprime| -1 \\
    &\leq \frac{1}{2}|\pmMk \Delta \pmMstar| + \frac{1}{2}\left(|\pmMj \Delta \pmMstar| + 2\right) -1 \\
    &= \frac{1}{2}|\pmMone \Delta \pmMstar| + \frac{1}{2}|\pmMtwo \Delta \pmMstar| \ ,
  \end{align*}
  where the first inequality is due to Property~\eqref{PropertyMatchingComponentSum} for $\pmMprime$ and
  the last inequality comes from the fact that in Case~1, $\pmMj \Delta \pmMstar$ has two fewer edges than $\pmMj \Delta \pmMprime$
  and in Case~2, the cardinalities agree.
  
  By similar arguments, $c(\pmMthree,\pmMstar) \leq \frac{1}{2} |M_i \Delta \pmMstar|$ holds for $i=1,2$.
  In order to prove that Property~\eqref{PropertyMatchingComponentNodesCovered} is satisfied for $\pmMstar$,
  assume that $c(\pmMthree,\pmMstar) = \frac{1}{2} |M_i \Delta \pmMstar|$ holds for some $i \in \setdef{1,2}$.
  Due to $c(\pmMthree,\pmMstar) = c(\pmMthree,\pmMprime) - 1$,
  this implies $i = j$ and we are in Case~1 since only there $|M_i \Delta \pmMstar|$ is less than $|M_i \Delta \pmMprime|$,
  and also have $c(\pmMthree,\pmMprime) = \frac{1}{2} |\pmMj \Delta \pmMprime|$.
  Property~\eqref{PropertyMatchingComponentNodesCovered} of $\pmMprime$ guarantees that $V(\pmMk \Delta \pmMprime) \supseteq V^*$ holds. 
  But since the node sets of $\pmMk \Delta \pmMprime$ and $\pmMk \Delta \pmMstar$ are the same, we also have $V(\pmMk \Delta \pmMstar) \supseteq V^*$.
 
  We proved that $\pmMstar$ is a good perfect matching, yielding the required contradiction to the minimality assumption of $c(\pmMthree,\pmMprime)$
  which completes the proof.
\end{proof}

\section{A Related Question}

Let us make a brief digression on the potential relevance of simple extensions with
respect to questions related to the diameter of a polytope, i.e.,
the maximum distance (minimum number of edges on a path) between any pair of vertices
in the graph of the polytope. We denote by $\Delta(d,m)$ the maximum diameter
of any $d$-dimensional polytope with~$m$ facets. It is well-known that
$\Delta(d,m)$ is attained by simple polytopes.
A necessary condition for a polynomial time variant of the simplex-algorithm to exist
is that $\Delta(d,m)$ is bounded by a polynomial in~$d$ and $m$
(thus by a polynomial in~$m$). 
In fact, in 1957 Hirsch even conjectured (see~\cite{Dantzig63})
that $\Delta(d,m)\le m-d$ holds,
which has only rather recently been disproved by Santos~\cite{Santos12}.
However, still it is even unknown whether $\Delta(d,m)\le 2m$ holds true, and the question,
whether $\Delta(d,m)$ is bounded polynomially
(i.e., whether the \emph{polynomial Hirsch-conjecture} is true) is a major open
problem in Discrete Geometry. 

In view of the fact that linear optimization over a polytope can be performed 
by linear optimization over any of its extensions, a reasonable relaxed version
of that question might be to ask whether every $d$-dimensional polytope~$P$ with~$m$ facets
admits an extension whose size and diameter both are bounded polynomially in~$m$.
Stating the relaxed question in this naive way, the answer clearly is positive,
as one may construct an extension by forming a pyramid over~$P$ 
(after embedding~$P$ into $\R^{\dim[P]+1}$), which has diameter two.
However, in some accordance with the way the simplex algorithm works by pivoting
between bases rather than only by proceeding along edges, it seems to make sense
to require the extension to be simple (which a pyramid, of course, in general is not).
But still, this is not yet a useful variation, 
since our result on flow polytopes shows that there are polytopes
that even do not admit a polynomial (in the number of facets) size simple extension at all.
Therefore, we propose to investigate the following question,
whose positive answer would be implied by establishing
the polynomial Hirsch-conjecture (as every polytope is an extension of itself).  
\begin{question}
  Does there exist a polynomial $q$ such that every \emph{simple} polytope $P$
  with $m$ facets has a \emph{simple} extension $Q$ with at most $q(m)$ many facets
  and diameter at most $q(m)$?
\end{question}

\textbf{Acknowledgements.} We are greatful to the referees whose comments lead to
significant improvements in the presentation of the material.

\pagebreak[3]
\addcontentsline{toc}{chapter}{Bibliography}

\bibliographystyle{plain}
\bibliography{references}

\end{document}